\def\R{\mathbb{R}}
\def\bbbr{{\rm I\!R}}
\newcommand{\bh}{\mathbf{h}}
\newcommand{\bj}{\mathbf{j}}
\newcommand{\bjj}{\mathbf{J}}
\newcommand{\ba}{\mathbf{a}}
\newcommand{\bp}{\mathbf{p}}
\newtheorem{pro}{Proposition}[section]
\newtheorem{lem}[pro]{Lemma}
\newtheorem{thm}[pro]{Theorem}
\newtheorem{cor}[pro]{Corollary}
\newcounter{example}
\newcommand\spt{\mbox{\rm spt}}
\newcommand\pkd{\mbox{\rm dim}_{\rm P}\,} 
\newcommand\hdd{\mbox{\rm dim}_{\rm H}\,} 
\newcommand\ubd{\overline{\mbox{\rm dim}}_{\rm B}\,} 
\newcommand\lbd{\underline{\mbox{\rm dim}}_{\rm B}\,} 
\newcommand{\bu}{{\bf u}} 
\newcommand{\bv}{{\bf v}} 
\newcommand{\bw}{{\bf w}} 
\newcommand{\w}{\omega} 
\newcommand{\be}{\begin{equation}} 
\newcommand{\ee}{\end{equation}} 
\newcommand{\red}[1]{{\color{red}#1}}
\begin{document}

\title{Generalized $q$-dimensions of measures on nonautonomous fractals}

\author{Yifei Gu}
\address{Department of Mathematics, East China Normal University, No. 500, Dongchuan Road, Shanghai 200241, P. R. China}

\email{52275500012@stu.ecnu.edu.cn}

\author{Jun Jie Miao}
\address{Department of Mathematics, East China Normal University, No. 500, Dongchuan Road, Shanghai 200241, P. R. China}

\email{jjmiao@math.ecnu.edu.cn}

\begin{abstract}
In the paper, we study the generalized $q$-dimensions of measures supported by nonautonomous attractors, which are the generalization of classic Moran sets and attractors of iterated function systems.

First, we estimate the generalized $q$-dimensions of measures supported on nonautonomous attractors, and we provide dimension formulas for generalized $q$-dimensions of measures supported on nonautonomous similar attractor under certain separation conditions. Next, we investigate the generalized $q$-dimensions of measures supported on nonautonomous affine sets and obtain the upper bounds. Finally, we study two variations of  nonautonomous affine sets and  obtain  their dimension formulas for  $q\geq 1 $.
\end{abstract}

\maketitle

\section{Introduction}
\subsection{Generalized $q$-dimensions}
The generalized $q$-dimensions (or $L^q$-dimensions)  of compactly supported Borel probability measures are  important concepts in fractal geometry and dynamical system which were introduced by Renyi in \cite{Renyi57,Renyi60} in the 1960s. Since the generalized $q$-dimensions  quantify global fluctuations of a given measure $\nu$, they provide valuable information about the multifractal properties of $\nu$ and also about the dimensions of its support.   In the case of a dynamical system, they are directly observable from the longtime behavior of obits, see \cite{Gr83, HP}. Moreover, they depend more regularly on the data, $\nu(B)$, rather than the mutltifractal spectrum of the measure $\nu$, and therefore, they easier to handle analytically and numerically, see~\cite{HJKP86,JKLPS85}.

Let $\nu$  be a positive finite Borel measure on $\mathbb{R}^d$. For $q \neq 1$,   the {\it lower} and {\it upper generalized $q$-dimensions} (also termed generalized R\'{e}nyi dimensions or $L^q$-{\it dimensions}) of $\nu$  are given by
\be
\underline{D}_q(\nu)=\liminf_{r\to 0}\frac{\log \sum_{\mathcal{M}_r}\nu(Q)^q}{(q-1)\log r}, \quad
\overline{D}_q(\nu)=\limsup_{r\to 0}\frac{\log \sum_{\mathcal{M}_r}\nu(Q)^q}{(q-1)\log r},
\label {gendim}
\ee
where $\mathcal{M}_r$ is  the family
of $r$-\textit{mesh cubes} in $\mathbb{R}^d$, that is cubes of the form
\newline
$[j_1 r, (j_1+1)r)\times \cdots\times [j_N r, (j_N+1)r)$ where $j_1,
\ldots, j_N \in \mathbb{Z}$.   For $q=1$, $\underline{D}^1(\nu)$ and $\overline{D}^1(\nu)$, also termed the
\textit{lower} and \textit{upper information dimensions}, are defined by
\be\label {infdim}
\underline{D}_1(\nu)=\liminf_{r\to 0}\frac{\sum_{\mathcal{M}_r}\nu(Q)\log\nu(Q)}{\log r}, \quad
\overline{D}_1(\nu)=\limsup_{r\to 0}\frac{\sum_{\mathcal{M}_r}\nu(Q)\log\nu(Q)}{\log r}.
\ee
If $\underline{D}_q(\nu)=\overline{D}_q(\nu)$,
we write $D_q(\nu)$ for the common value which we refer to as the
\textit{generalized $q$-dimension}. Note that the generalized $q$-dimension of a measure contains information about the measure
 and its support, and it directly follows from the definition that
$$
\ubd \spt(\nu)=\overline{D}_0(\nu), \quad \lbd \spt(\nu)=\underline{D}_0(\nu),
$$
where $\spt$ denotes the support of $\nu$. We refer readers to \cite{Bk_KJF2, P} for the background reading.

One concept related to generalized $q$-dimensions is $L^q$-spectrum. \textit{The lower and upper $L^q$-spectrum} of $\nu$ are defined by
\be
\underline{\tau}(\nu,q)=\liminf_{r\to 0}\frac{\log \sum_{\mathcal{M}_r}\nu(Q)^q}{-\log r}, \quad
\overline{\tau}(\nu,q)=\limsup_{r\to 0}\frac{\log \sum_{\mathcal{M}_r}\nu(Q)^q}{-\log r}.
\ee
If $\underline{\tau}(\nu,q)=\overline{\tau}(\nu,q)$, we write $\tau(\nu,q)$ for the common value which we call  \textit{$L^q$-spectrum}. It is clear that for $q\neq 1$,
$$
D_q(\nu)=\frac{\tau(\nu,q)}{1-q}
$$
if the $L^q$-spectrum $\tau(\nu,q)$ exists.  Furthermore, if $\overline{\tau}(\nu,q)$ is differentiable at $q=1$, then Ngai~\cite{Ngai97} showed that
$$
\hdd \nu = \pkd \nu = D_1(\nu) = - \overline{\tau}'(\nu,q).
$$
Moreover, Riedi proved that the Legendre transform is always equal to the $L^q$-spectrum $\tau(\nu,q)$, see ~\cite{Riedi95} for details.

Since generalized $q$-dimensions are closely related to various key concepts in fractal geometry, they have a rich literature concerning measures supported on fractal sets. In \cite{PSol1}, Peres and Solomyak proved that generalized $q$-dimensions exist for self-conformal measures, and Fraser \cite{Fraser16} extended to graph-directed self-conformal measures. Miao and Wu~\cite{MW15}  studied the generalized $q$-dimension of measures on the Heisenberg group. Qiu, Wang and Wang\cite{QWW24} estimate the upper and lower $L^q$-spectra of measures generated by a class of graph-directed planar non-conformal iterated function systems.  We refer readers to~\cite{BM07,BF13,BF21,CM,Gr83,HP,LN98,LN99,Ngai97,P} for various related studies on generalized $q$-dimensions.

\subsection{Iterated function systems}
Iterated function systems  are an important way to generate various fractals, such as self-conformal set and self-affine sets, and they also provide an important stage to explore the generalized $q$-dimensions.

An {\em iterated function system (IFS)} is a finite family of contractions
$S_1,\ldots,S_m: \bbbr^d \rightarrow \bbbr^d$ with $m\geq 2$. It is
well-known that there is a non-empty compact set $E\subseteq \bbbr^d$ such that
$$
E=\bigcup_{i=1}^m S_i(E),
$$
called the {\em attractor (or invariant set)} of the IFS.
If the contractions $S_1, S_2, \ldots, S_m$ are affine transformations, that is, with the form
$$S_i=T_i(x)+\w_i, \qquad x \in \mathbb{R}^d,$$
where $T_i$ is a linear transformation on $\mathbb{R}^d$ (representable by an $d \times d$ matrix) and $\w_i$
is a vector in $\mathbb{R}^d$, then the attractor $E$ is called a \textit{self-affine set}. If the contractions $S_1, S_2, \ldots, S_m$ are similarities, that is, with
$$|S_i(x) - S_i(y)| = c_i|x - y|, \qquad x, y \in \mathbb{R}^d,$$
where $0<c_i<1$, the attractor $E$ is called a \textit{self-similar set}. It is clear that self-similar sets are a particular case of self-affine sets.

Given a self-affine IFS $\{S_1, S_2, \ldots, S_m\}$ with the attractor $E$. Let $\bp=(p_1,\ldots,p_m)$ be a probability vector. The unique probability measure $\mu_\bp$ supported on $E$ satisfying
$$
\mu_\bp(\cdot)=\sum_{i=1}^m p_i \mu_\bp(S_i^{-1}(\cdot)),
$$
is called a \textit{self-affine measure}. If $S_1, S_2, \ldots, S_m$ are similarities, then $\mu_\bp$  is called a \textit{self-similar measure}. We refer readers to \cite{Bk_KJF2} for background reading.

Generalized $q$-dimensions, as well as other multifractal parameters, have already been investigated for various fractal measures supported on self-similar sets. Cawley and Mauldin~\cite{CM} studied multifractal analasis of  self-similar measures $\mu_\bp$ supported on self-similar set with the open set condition satisfied, and they proved that for $q>0$ and $q\neq 1$, the generalized $q$-dimension exists and
$$
D_q(\mu_\bp)=\frac{\tau(q)}{1-q},
$$
where $\tau(q)$ is given by
$$\sum_{i=1}^m p_i^q c_i^{\tau(q)}=1,$$
and for  $q= 1$,
$$
D_1(\nu) = - \tau(q)'=\frac{\sum_{i=1}^{m} p_i\log p_i}{\sum_{i=1}^{m} p_i\log c_i}=\hdd \mu_\bp.
$$

In the setting of self-affine measures, the generalized $q$-dimensions are notoriously difficult to compute. Currently, Most work focuses on the measures supported on nontypical self-affine sets, that is, the self-affine sets with grid structures such  as Bedford-McMullen carpets\cite{Bedfo84, McMul84}, Lalley-Gatzouras sets~\cite{LalGa92} and Baranski sets~\cite{Baran07}, see \cite{ DasSim17, Fraser16,GM22,KenPer96} for various examples. King~\cite{K} studied multifractal analysis of the self-affine measures supported on Bedford-McMullen carpets, where he showed that the generalized  $q$-exists and provided the dimension formula.  Olsen ~\cite{O} generalized King’s results to self-affine sponges which is a generalization of Bedford-McMullen carpets in higher dimensional space. Recently, Kolossv\'{a}ry~\cite{Kolo23} studied the $L^q$-spectrum of self-affine measures on sponges which satisfies a suitable separation condition. Feng and Wang~\cite{FW} calculated the generalized $q$-dimension of self-affine measures on the plane supported on attractors of IFS given by orientation-preserving diagonal matrices satisfying a suitable separation condition. Fraser~\cite{Fraser16} extended their result to include reflections and rotations by $90^\circ$.

Another class of self-affine sets is called  typical self-affine sets, where the translations in affine contractions are treated as parameters and almost sure results are often provided, see \cite{Falco88,Falco92}.  Falconer made important progress on generalized $q$-dimensions of measures supported on typical self-affine sets. In~\cite{Falco05}, he estimated the lower and upper bounds for  generalized $q$-dimensions of measures supported on typical self-affine sets, and he proved that  generalized $q$-dimension exists almost surely for $1<q\leq 2$. Later, Falconer~\cite{Falco10} extended the study to almost self-affine sets, see \cite{Falco10, JPS07} and he proved that generalized $q$-dimension exists for $q>1$. We refer the reader to \cite{BHR19,FM07,Fraser16, FK18,FLLMY24,LN98,LN99, Morr17,Ngai97} for various related studies and references therein.

\subsection{Nonautonomous iterated function systems}
Nonautonomous iterated function systems may be regarded as a generalization of iterated function systems. We give a brief review of Nonautonomous iterated function systems, and we refer readers to \cite{GM,GM22} for details. 

Let $\{n_{k}\}_{k=1}^\infty$ be a sequence of integers greater than or equal to $2$. For
each $k=1,2,\cdots$, we write
\begin{equation}\label{defsigmak}
\Sigma^{k}=\{u_{1}u_{2}\cdots u_{k}:1\leq u_{j}\leq
n_{j},\ j\leq k\}
\end{equation}
for the set of words of length $k$, with $
\Sigma^{0}=\{\emptyset \}$ containing only the empty word $\emptyset$, and write
\begin{equation}\label{defsigma*}
\Sigma^{*}=\bigcup _{k=0}^{\infty }\Sigma^{k}
\end{equation}
for the set of all finite words. Let $\Sigma^\infty= \{(u_1u_2 \ldots u_k\ldots) : 1 \leq  u_k \leq n_k\}$ be the corresponding set of infinite words.

Suppose that $J\subset \R^d$ is a compact set with non-empty interior.
Let $\{\Xi_k\}_{k=1}^\infty$ be a sequence of collections of contractive mappings, that is
\begin{equation}\label{Xi}
\Xi_k=\{S_{k,1},S_{k,2},\ldots,S_{k,n_k}\},
\end{equation}
where each $S_{k,j}$ satisfies that
$|S_{k,j}(x)-S_{k,j}(y)| \leq c_{k,j} |x-y| $ for some $0<c_{k,j}<1$.
We say the collection $\mathcal{J}=\{J_{\mathbf{u}}:\mathbf{u}\in \Sigma^*\}$ of closed subsets of $J$ fulfils the \textit{nonautonomous structure with respect to $\{\Xi_k\}_{k=1}^\infty$} if it satisfies the following conditions:
\begin{itemize}
\item[(1).] For all integers $k>0$ and all $\mathbf{u}\in \Sigma^{k-1}$, the elements $J_{\mathbf{u}1}, J_{\mathbf{u}2},\cdots, J_{\mathbf{u}n_{k}}$ of $\mathcal{J}$ are the subsets of $J_{\mathbf{u}}$. We write $J_{\emptyset }=J$ for the empty word $\emptyset $.
\item[(2).] For each $\mathbf{u}=u_1\ldots u_k \in \Sigma^*$,  there exists an  transformation $\Psi_\mathbf{u}: \mathbb{R}^{d}\rightarrow \mathbb{R}^{d}$  such that
    $$
    J_{\mathbf{u}}=\Psi_{\mathbf{u}}(J)=\Psi_{u_1}\circ \ldots \circ \Psi_{u_j} \ldots \circ \Psi_{u_k} (J),
    $$
    where $\Psi_{u_j}(x)=S_{j,u_j}x+ \w_{u_1\ldots u_j}$, for some $\w_{u_1\ldots u_j} \in \R^d$, and $S_{j,u_j}\in \Xi_j$, $j=1,2,\ldots k$.
\item[(3).]The maximum of the diameters of $J_\bu$ tends to 0 as $|\bu|$ tends to $\infty$, that is,
    $$
    \lim_{k\to \infty} \max_{\bu\in \Sigma^k} |J_\bu|=0.
    $$

\end{itemize}
We call $\{\Xi_k\}_{k=1}^\infty$ \textit{ a nonautonomous iterated function systems}(NIFS) determined by $\mathcal{J}$, and the non-empty compact set
\begin{equation}\label{attractor}
E=E(\mathcal{J})=\bigcap\nolimits_{k=1}^{\infty }\bigcup\nolimits_{\mathbf{u}\in \Sigma^{k}}J_{\mathbf{u}}
\end{equation}
is called a \textit{nonautonomous attractor} determined by $\mathcal{J}$.  For all $\mathbf{u}\in \Sigma^{k}$, the elements $J_{\mathbf{u}}$ are called \textit{\ $k$th-level basic sets} of $E$.

 If the nonautonomous attractor $E$ satisfies  that for all integers $k>0$ and $\mathbf{u}\in \Sigma^{k-1}$,
$$\mathrm{int} (J_{\bu i})\cap \mathrm{int} (J_{\bu i'}) = \emptyset  \quad \textit{ for } i\neq i'\in\{1,2,\ldots, n_k\},
$$
we say $E$ satisfies \textit{open set condition} (OSC).  If for all $k\in \mathbb{N}$ and $\mathbf{u}\in \Sigma^{k-1}$,
$$
J_{\bu i}\cap J_{\bu i'} = \emptyset  \quad \textit{ for } i\neq i'\in\{1,2,\ldots, n_k\},
$$
then we say the nonautonomous attractor  $E$ satisfies \textit{strong separation condition}(SSC).

If the contractions in $\Xi_k$  are linear mappings, that is
\begin{equation}\label{Xi}
\Xi_k=\{T_{k,1},T_{k,2},\ldots,T_{k,n_k}\},
\end{equation}
where
$T_{k,j}$ are $d\times d$ matrices with $\|T_{k,j}\|<1$ for $j=1,2,\ldots n_k$, then we call $\{\Xi_k\}_{k=1}^\infty$  the \textit{nonautonomous affine iterated function system}(NAIFS) and call $E$ a \textit{nonautonomous affine set}(NAS). If for each $k$,  $T_{k,1},\ldots T_{k,n_k}$ are similarities, that is, with
$$
|T_{k,i}(x)-T_{k,i}(y)|=c_{k,i}|x-y|
$$
where $c_{k,i}<1$, then we call  $\{\Xi_k\}_{k=1}^\infty$  the \textit{nonautonomous similar iterated function system }(NSIFS) and call $E$ a \textit{nonautonomous similar set}(NSS).  Note that nonautonomous attractors may also be regarded as a generalization  of Moran sets, see~\cite{GM22,Moran,Wen00} for details,  and Moran sets are a special class of nonautonomous similar sets satisfying open set condition.

For $\bu=u_1\cdots u_k\in\Sigma^k$, we write $\bu^-=u_1\ldots u_{k-1}$ and  write $|\bu|=k$ for the length of $\bu$. For each $\bu = u_1 u_2\cdots u_k\in
\Sigma^{*}$, and $\bv = v_1 v_2\cdots \in \Sigma^{\infty}$,   we say $\mathbf{u}$ is a \textit{curtailment or prefix } of $\bv$, denote by  $\mathbf{u} \preceq \bv$, if $\mathbf{u} = v_1\cdots v_k =\bv|k$. We call the set $\mathcal{C}_\bu =
\{\bv\in\Sigma^{\infty} : \bu \preceq\bv\}$ the \textit{cylinder} of $\mathbf{u}$. If $\mathbf{u}=\emptyset$, its cylinder is $\mathcal{C}_\bu=\Sigma^{\infty}$.  We term a subset $A$ of $\Sigma^*$ a	\textit{cut set} if $\Sigma^\infty\subset\bigcup_{\mathbf{u}\in A}\mathcal{C}_\mathbf{u}$, where $\mathcal{C}_\mathbf{u}\bigcap\mathcal{C}_{\mathbf{v}}=\emptyset$ for  all $\mathbf{u}\neq\mathbf{v}\in A$. It is equivalent to that, for every $\mathbf{w}\in \Sigma^\infty$, there is a unique  $\mathbf{u}\in A$ with $|\mathbf{u}|<\infty$ such that $\mathbf{u}\preceq \mathbf{w}$.  If
$\bu,\bv\in \Sigma^\infty$, then $\bu\wedge\bv \in \bjj$ denotes the
maximal common initial subword of both $\bu$ and $\bv$.

For $\bv=v_1\ldots v_k\in \Sigma^k$, we denote compositions of mappings by $S_\mathbf{v}= S_{1,v_1}\ldots S_{k,v_k}$. Let $\Pi: \Sigma^\infty \rightarrow \mathbb{R}^d$ be the projection given by
\begin{eqnarray}
\Pi(\bu) & =& \bigcap_{k=0}^\infty (S_{1,u_1} + \w_{u_1})  (S_{2,u_2} + \w_{u_1u_2}) \cdots (S_{k,u_k} + \w_{\bu|k})(J)  \nonumber\\
& =& \lim_{k \to \infty} (S_{1,u_1} + \w_{u_1})  (S_{2,u_2} + \w_{u_1u_2}) \cdots (S_{k,u_k} + \w_{\bu|k})(x)\label{points2}\\
& =&  \w_{u_1} +S_{1,u_1}\w_{u_1u_2}+ \cdots +  S_{\bu|k}\w_{\bu|k+1}  + \cdots.  \nonumber
\end{eqnarray}
It is clear that the attractor $E$ is the image of $\Pi$, i.e. $E=\Pi(\Sigma^\infty)$. To emphasize the dependence on translations, we sometimes write $\Pi^\w(\bu)$ and $E^\w$ instead of $\Pi(\bu)$ and $E$.

Let $\mu$  be  a positive finite Borel measure on $\Sigma^\infty$. We define  $\mu^\w$, the projection of  measure $\mu$ onto $\R^d$, by
\begin{equation}
 \mu^\w(A)=\mu\{\bu: \Pi^\w(\bu)\in A\},                    \label{def_ua}
\end{equation}
for $A\subseteq\mathbb{R}^d$, or equivalently by
\be \int
f(x)d\mu^\w(x)=\int f(\Pi^\w(\bu))d\mu(\bu),  \label{def_ua2} \ee
for every continuous  $f:\R^d\to \mathbb{R}$. Then $\mu^\w$ is a Borel measure supported on $E^{\w}$.

In particular, given a sequence of probability vectors $\{\bp_k=(p_{k,1},\ldots,p_{k,n_k})\}_{k=1}^\infty$, that is, $\sum_{i=1}^{n_k} p_{k,i}=1$ for all $k>0$, we may define a Bernoulli measure $\mu$ on $\Sigma^\infty$ by setting
\begin{equation}\label{mP}
\mu(\mathcal{C}_\bu)=p_\bu\equiv p_{k,u_1}p_{k,u_2}\ldots p_{k,u_k} \quad (\bu=u_1\ldots u_k)
\end{equation}
for each cylinder $\mathcal{C}_\bu$ and
extending to general subsets of $\Sigma^\infty$ in the usual way.  With $E^\w$ a nonautonomous affine(similar) set, the projected measure $\mu^\w$ on $E^{\w}$  given by (\eqref{def_ua})  is termed a {\it nonautonomous affine(similar) measure}.

It is nature to explore the properties of generalized $q$-dimensions of measures supported on nonautonomous attractors. However, such sets, different to the classic attractors generated by IFS, do not have any dynamical properties anymore, and the tools of ergodic theory cannot be invoked. Therefore generalized $q$-dimensions  rarely exist, and it is more difficult to determine their relevant  properties. In this paper, we investigate the lower and upper generalized $q$-dimensions of measures supported on nonautonomous attractors, and  we state the main conclusions
in Section ~\ref{sec 2}. In Section ~\ref{sec 3}, we estimated the upper and lower bounds for the generalized $q$-dimensions of measures supported on nonautonomous attractors, and dimension formulas are provided for  nonautonomous  similar measures.  In Section ~\ref{sec 4}, we study the upper bounds for generalized $q$-dimensions of measures supported on nonautonomous affine sets. In Section ~\ref{sec 5} and Section ~\ref{sec_6}, we study the lower bounds for generalized $q$-dimensions of measures supported on two variations of nonautonomous affine attractors.

\section{Generalized $q$-dimensions of measures on Non-autonomous fractals}\label{sec 2}

\subsection{Generalized $q$-dimensions of measures on nonautonomous fractals}

Let $\Sigma^*$ be given by \eqref{defsigma*}. Given a sequence of positive real vectors $\{(c_{k,1},c_{k,2},\cdots, c_{k,n_{k}})\}_{k=1}^\infty$. In this paper, we always assume that
$$
c_*=\inf_{k,j} \{c_{k,j}\} > 0, \qquad\qquad c^*=\sup_{k,j} \{c_{k,j}\} < 1.
$$

For each $s>0$ and $0<r<1$, we write
$$
\Sigma^*(s, r)=\{\mathbf{u}=u_1\ldots u_k \in \Sigma^*: c_\mathbf{u}  \leq r < c_{\bu^-} \}.
$$
It is clear that $\Sigma^*(s,r)$ is a  cut-set, and for each $\mathbf{u}\in \Sigma^*(s,r)$,  we have that
$$
c_{*} r < c_\mathbf{u} \leq r.
$$
Let $\mu$ be a  positive finite Borel measure on $\Sigma^\infty$. We define that
\begin{equation}\label{def_dq-}
    d_q^- = \left\{
    \begin{aligned}
    & \sup \Big\{s: \limsup_{r\to 0}\sum_{\bu\in\Sigma^*(s, r)}c_\bu^{s(1-q)}\mu(\mathcal{C}_\bu)^q<\infty\Big\}, \qquad\quad\text{ for } q>1;\\
    & \inf \Big\{s: \liminf_{r\to 0}\sum_{\bu\in\Sigma^*(s, r)}c_\bu^{s(1-q)}\mu(\mathcal{C}_\bu)^q<\infty\Big\}, \qquad\quad  \  \ \text{ for }  0<q<1;\\
    & \sup \Big\{s: \limsup_{r\to 0}\sum_{\bu\in\Sigma^*(s, r)}\mu(\mathcal{C}_\bu)\log\Big(c_\bu^{-s}\mu(\mathcal{C}_\bu)\Big)<\infty\Big\}, \text{ for } q=1.
    \end{aligned}
    \right.
\end{equation}
and
\begin{equation}\label{def_dq+}
    d_q^+ = \left\{
    \begin{aligned}
    & \sup \Big\{s: \liminf_{r\to 0}\sum_{\bu\in\Sigma^*(s, r)}c_\bu^{s(1-q)}\mu(\mathcal{C}_\bu)^q<\infty\Big\}, \qquad\quad\text{ for } q>1;\\
    & \inf \Big\{s: \limsup_{r\to 0}\sum_{\bu\in\Sigma^*(s, r)}c_\bu^{s(1-q)}\mu(\mathcal{C}_\bu)^q<\infty\Big\},  \qquad\quad \text{ for }  0<q<1;\\
    & \sup \Big\{s: \liminf_{r\to 0}\sum_{\bu\in\Sigma^*(s, r)}\mu(\mathcal{C}_\bu)\log\Big(c_\bu^{-s}\mu(\mathcal{C}_\bu)\Big)<\infty\Big\}, \text{ for } q=1.
    \end{aligned}
    \right.
\end{equation}

Generally, it is difficult to find the generalized $q$-dimensions of measures supported on nonautonomous attractors, but we are still able to provide some rough estimates if contraction ratios are known. First, we show that $d_q^-$ and $d_q^+$ are the natural upper bounds for the lower and upper  generalized $q$-dimensions.
\begin{thm}\label{thm_DqU}
Let $E$ be the nonautonomous attractor given by~\eqref{attractor}. Let $\mu$ be a positive finite Borel measure on $\Sigma^\infty$, and let $\mu^\w$ be the image measure of $\mu$ given by~\eqref{def_ua}. For each integer $ k>0$, we assume that
$$
|S_{k,i}(x)-S_{k,i}(y)| \leq c_{k,i}|x-y|, \qquad \textit{ for }x,y \in J,
$$
where $c_{k,i}<1$, for all $i=1,2,\ldots, n_k$. Then  for all $q>0$,
$$
\underline{D}_q(\mu^\w)\leq\min\{d_q^-,d\}, \quad \textit{and } \quad
\overline{D}_q(\mu^\w)\leq \min\{d_q^+,d\},
$$
where $d_q^-$ and $d_q^+$ are given by \eqref{def_dq-} and \eqref{def_dq+} with respect to $(c_{k,1},c_{k,2},\cdots, c_{k,n_{k}})$.
\end{thm}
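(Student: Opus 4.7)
The plan is to compare the mesh-cube $q$-sum $S_r := \sum_{Q\in \mathcal{M}_r}\mu^\w(Q)^q$ with the cylinder weighted sum $\Sigma_r(s) := \sum_{\bu\in \Sigma^*(s,r)} c_\bu^{s(1-q)}\mu(\mathcal{C}_\bu)^q$ that governs $d_q^\pm$. The elementary geometric input is that $\dia J_\bu \leq |J| c_\bu \leq |J| r$ for $\bu\in \Sigma^*(s,r)$, so each $J_\bu$ meets at most a constant number $K=K(d,|J|)$ of $r$-mesh cubes; equivalently $|C_\bu|\leq K$ where $C_\bu := \{Q\in \mathcal{M}_r: J_\bu\cap Q\neq \emptyset\}$, and this holds \emph{without} any separation hypothesis on $E$. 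Setting $m_{\bu,Q} := \mu(\mathcal{C}_\bu\cap \Pi^{-1}(Q))$, the partition identities $\mu^\w(Q) = \sum_{\bu} m_{\bu,Q}$ and $\mu(\mathcal{C}_\bu) = \sum_{Q\in C_\bu} m_{\bu,Q}$ are the starting point.

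For $q>1$ I would use convexity of $x\mapsto x^q$ twice. First, $(\sum a_i)^q \geq \sum a_i^q$ applied to $\mu^\w(Q)^q$ and summed over $Q$ gives $S_r\geq \sum_\bu \sum_{Q\in C_\bu} m_{\bu,Q}^q$; second, the power-mean on the partition $\mu(\mathcal{C}_\bu) = \sum_{Q\in C_\bu} m_{\bu,Q}$ (with $|C_\bu|\leq K$ terms) yields $\sum_{Q\in C_\bu} m_{\bu,Q}^q \geq K^{1-q}\mu(\mathcal{C}_\bu)^q$. Hence the key lower bound
\[
S_r \;\geq\; K^{1-q}\sum_{\bu\in\Sigma^*(s,r)}\mu(\mathcal{C}_\bu)^q,
\]
obtained with no appeal to separation. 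Using $c_\bu\geq c_* r$ one gets $\sum_\bu \mu(\mathcal{C}_\bu)^q \geq c_*^{s(q-1)}\, r^{s(q-1)}\,\Sigma_r(s)$, hence $S_r \geq C_1 r^{s(q-1)}\Sigma_r(s)$. For any $s>d_q^-$ the definition of $d_q^-$ forces $\Sigma_{r_k}(s)\to \infty$ along a subsequence $r_k\to 0$; taking logarithms and dividing by $(q-1)\log r_k<0$ flips signs, and the contribution $\log \Sigma_{r_k}(s)/((q-1)\log r_k)$ is nonpositive, producing $\log S_{r_k}/((q-1)\log r_k)\leq s + o(1)$ and hence $\underline{D}_q(\mu^\w)\leq s$. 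Letting $s\downarrow d_q^-$ yields $\underline{D}_q\leq d_q^-$; the argument for $\overline{D}_q\leq d_q^+$ is identical, with the $\liminf$ clause in $d_q^+$'s definition forcing $\Sigma_r(s)\to\infty$ along \emph{all} $r\to 0$.

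For $0<q<1$, the concavity of $x^q$ reverses both inequalities above and yields $S_r\leq K^{1-q}\sum_\bu \mu(\mathcal{C}_\bu)^q \leq C_2 r^{s(q-1)}\Sigma_r(s)$. For $s$ slightly above $d_q^+$ (resp.\ $d_q^-$) the cylinder sum is eventually bounded (resp.\ bounded along a subsequence), and the parallel log-division gives $\overline{D}_q\leq d_q^+$ (resp.\ $\underline{D}_q\leq d_q^-$). The $q=1$ case is handled by the analogous subadditivity of $x\log x$, replacing $q$-powers throughout by the entropy expression. Finally, the $\min$ with $d$ comes from the trivial packing estimate $N_r := \#\{Q\in \mathcal{M}_r: \mu^\w(Q)>0\}\leq Cr^{-d}$ combined with Jensen, which gives $S_r\geq N_r^{1-q}$ for $q\geq 1$ and $S_r\leq N_r^{1-q}$ for $0<q\leq 1$.

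The step I expect to be the main technical point is securing the \emph{lower} bound on $S_r$ in the $q>1$ case without any separation assumption; the resolution is that only the universal estimate $|C_\bu|\leq K$ enters (through the two convexity applications), while no bound on the overlap count $|\{\bu\in\Sigma^*(s,r): J_\bu\cap Q\neq \emptyset\}|$ is needed, so arbitrarily heavy overlaps among the basic sets do not obstruct the argument.
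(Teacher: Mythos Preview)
Your proposal is correct and follows essentially the same approach as the paper: both arguments pivot on the single geometric fact that each $J_\bu$ with $\bu\in\Sigma^*(s,r)$ meets at most a constant number of $r$-mesh cubes, and then apply the two convexity/concavity inequalities (the power inequality $(\sum a_i)^q\gtrless\sum a_i^q$ and Jensen on the bounded partition) to compare $\sum_Q\mu^\w(Q)^q$ with $\sum_\bu\mu(\mathcal{C}_\bu)^q$. Your $m_{\bu,Q}$ is exactly the paper's $\mu_\bu^\w(Q)$, and the only cosmetic difference is that you apply the two inequalities in the opposite order; your explicit packing argument for the bound by $d$ is a small addition the paper leaves implicit.
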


For the lower bounds, it is more awkward to compute, and they often heavily depend on the geometric structures of the supported sets. We say the nonautonomous set $E$ satisfies {\it gap separation condition(GSC)} if there exists a constant $C$ such that for all $\bu\in\Sigma^*$, we have that
$$
\inf\{|x-y|: x\in J_{\bu i}, y\in J_{\bu j },  i\neq j\}\geq C|J_\bu|.
$$
Note that GSC implies the strong separation condition.  Under this extra assumption, we are able to show that $d_q^-$ and $d_q^+$ are also the lower bounds for the lower and upper  generalized $q$-dimensions.
\begin{thm}\label{thm_DqL}
Let $E$ be the nonautonomous attractor given by~\eqref{attractor} with gap separation condition satisfied. Let $\mu$ be a positive finite Borel measure on $\Sigma^\infty$, and let $\mu^\w$ be the image measure of $\mu$ given by~\eqref{def_ua}. For each integer $ k>0 $, we assume that
$$
|S_{k,i}(x)-S_{k,i}(y)| \geq c_{k,i}|x-y|, \qquad \textit{ for }x,y \in J,
$$
where $c_{k,i}<1$,  for all $i=1,2,\ldots, n_k$. Then  for all $q>0$,
$$
\underline{D}_q(\mu^\w) \geq  d_q^-, \quad \textit{ and } \quad  \overline{D}_q(\mu^\w) \geq d_q^+,
$$
where $d_q^-$ and $d_q^+$ are given by \eqref{def_dq-} and \eqref{def_dq+} with respect to $(c_{k,1},c_{k,2},\cdots, c_{k,n_{k}})$.
\end{thm}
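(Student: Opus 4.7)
The plan is to exploit the gap separation condition to show that the mesh sum $\sum_{Q \in \mathcal{M}_r} \mu^\w(Q)^q$ defining the generalized $q$-dimensions is comparable to the partition sum $\sum_{\bu \in \Sigma^*(s,r)} \mu(\mathcal{C}_\bu)^q$, which after rescaling by $c_\bu^{s(1-q)}$ is exactly what appears in the definitions of $d_q^\pm$. The size estimate $c_*r < c_\bu \leq r$ for every $\bu \in \Sigma^*(s,r)$ then provides the bridge between the partition sum and the definitional one.

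The first step is a geometric separation lemma: there is a constant $\alpha > 0$, independent of $r$, such that $\mathrm{dist}(J_\bu, J_\bv) \geq \alpha r$ for all distinct $\bu, \bv \in \Sigma^*(s,r)$. Indeed, letting $\bw = \bu \wedge \bv$ denote the maximal common prefix, $J_\bu \subseteq J_{\bw i}$ and $J_\bv \subseteq J_{\bw j}$ for some $i \neq j$, so the GSC yields $\mathrm{dist}(J_\bu, J_\bv) \geq C|J_\bw|$; the lower Lipschitz hypothesis gives $|J_\bw| \geq c_\bw|J| \geq c_\bu|J| > c_*r|J|$, so $\alpha = C c_*|J|$. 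A standard packing argument then produces uniform constants $N = N(d, \alpha)$ and $M = M(d, |J|)$ such that every $r$-mesh cube meets at most $N$ of the basic sets $\{J_\bu\}_{\bu \in \Sigma^*(s,r)}$, and every $J_\bu$ meets at most $M$ mesh cubes (the latter using that $|J_\bu|$ is of order $r$, via the upper Lipschitz bound inherent in the NIFS set-up).

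The second step combines this counting with the cylinder--partition identities
\begin{equation*}
\mu^\w(Q) \leq \sum_{\bu\,:\,J_\bu \cap Q \neq \emptyset} \mu(\mathcal{C}_\bu), \qquad \mu(\mathcal{C}_\bu) \leq \mu^\w(J_\bu) \leq \sum_{Q\,:\,Q \cap J_\bu \neq \emptyset} \mu^\w(Q),
\end{equation*}
which hold because $\{\mathcal{C}_\bu\}_{\bu \in \Sigma^*(s,r)}$ partitions $\Sigma^\infty$ and $\Pi(\mathcal{C}_\bu) \subseteq J_\bu$. Raising to the $q$-th power via the power mean inequality when $q \geq 1$ and via the subadditivity of $x \mapsto x^q$ when $0 < q < 1$, then summing and interchanging order (using $N$ and $M$ to control multiplicity), one obtains the two-sided comparability
\begin{equation*}
c_1 \sum_{\bu \in \Sigma^*(s,r)} \mu(\mathcal{C}_\bu)^q \leq \sum_{Q \in \mathcal{M}_r} \mu^\w(Q)^q \leq c_2 \sum_{\bu \in \Sigma^*(s,r)} \mu(\mathcal{C}_\bu)^q,
\end{equation*}
with constants $c_1, c_2 > 0$ independent of $r$. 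The analogous comparability for $q = 1$ in terms of entropy-type sums follows from Jensen's inequality applied to $x \mapsto x \log x$.

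In the final step one converts $\sum_\bu \mu(\mathcal{C}_\bu)^q$ into the definitional sum $\sum_\bu c_\bu^{s(1-q)}\mu(\mathcal{C}_\bu)^q$ using $c_*r < c_\bu \leq r$. For $q > 1$, if $s < d_q^-$ the definition yields a uniform bound $\sum_\bu c_\bu^{s(1-q)}\mu(\mathcal{C}_\bu)^q \leq M_s$ for all small $r$; since $c_\bu^{s(1-q)} \geq r^{s(1-q)}$ this forces $\sum_\bu \mu(\mathcal{C}_\bu)^q \leq M_s\, r^{s(q-1)}$, and the upper half of the comparability gives $\sum_{Q \in \mathcal{M}_r} \mu^\w(Q)^q \leq c_2 M_s\, r^{s(q-1)}$, so dividing the logarithm by $(q-1)\log r$ yields $\underline{D}_q(\mu^\w) \geq s$; letting $s \uparrow d_q^-$ closes this case, and the statement for $\overline{D}_q \geq d_q^+$ follows identically along the subsequence realising the $\liminf$ in the definition. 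For $0 < q < 1$, the direction reverses: when $s < d_q^-$ the defining $\liminf$ is infinite, and the bound $c_\bu^{s(1-q)} \leq r^{s(1-q)}$ yields $\sum_\bu \mu(\mathcal{C}_\bu)^q \geq M_s\, r^{-s(1-q)}$ eventually, which is transferred to $\sum_Q \mu^\w(Q)^q$ through the lower half of the comparability. The $q = 1$ case is handled in the same fashion using the entropy version of the comparability. The main obstacle is the geometric separation lemma together with the uniformity of the constants $N, M$ over all scales $r$: this is exactly the role of the GSC, since under the weaker open set condition basic sets at a common cut-set level can be arbitrarily close and the packing argument would break down.
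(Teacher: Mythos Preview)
Your overall strategy coincides with the paper's: use the GSC to bound the number of basic sets $J_\bu$, $\bu\in\Sigma^*(s,r)$, that any $r$-cube can meet, compare the mesh moment sum with the partition sum, and finish via $c_*r<c_\bu\le r$. Your separation lemma is in fact stated more carefully than the paper's (which jumps straight to $|J_{\bu^-}|$ without writing $\bu\wedge\bv$).

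There is, however, a gap in your comparison step when $q>1$. You assert that each $J_\bu$ meets at most $M=M(d,|J|)$ mesh cubes because ``$|J_\bu|$ is of order $r$, via the upper Lipschitz bound inherent in the NIFS set-up''. But in this theorem the $c_{k,i}$ are \emph{lower} bi-Lipschitz constants and the cut set $\Sigma^*(s,r)$ is built from them; the upper contraction ratios supplied by the NIFS definition need not be comparable to the $c_{k,i}$, so nothing bounds $|J_\bu|$ by a fixed multiple of $r$. Your route through the inequality $\mu^\w(Q)\le\sum_{\bu:J_\bu\cap Q\neq\emptyset}\mu(\mathcal{C}_\bu)$ then stalls after raising to the $q$-th power and summing over $Q$, since the multiplicity $|\{Q:Q\cap J_\bu\neq\emptyset\}|$ is uncontrolled.

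The paper avoids this by passing through the intermediate quantities $\mu^\w(Q\cap J_\bu)$ rather than $\mu(\mathcal{C}_\bu)$: bounded overlap gives $\mu^\w(Q)^q\le N^{q-1}\sum_\bu\mu^\w(Q\cap J_\bu)^q$, and then the elementary inequality $\sum_Q x_Q^q\le\big(\sum_Q x_Q\big)^q$ for $q>1$ (valid for \emph{any} nonnegative sequence) yields $\sum_Q\mu^\w(Q\cap J_\bu)^q\le\mu^\w(J_\bu)^q=\mu(\mathcal{C}_\bu)^q$, with no bound on the number of cubes meeting $J_\bu$ required. For $0<q<1$ and $q=1$ the direction of the comparison you need uses only the $N$ bound, and there your argument is fine as written.
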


Immediately, we have the following conclusion.
\begin{cor}\label{proc}
Let $E$ be the nonautonomous attractor given by~\eqref{attractor} with gap separation condition satisfied. Let $\mu$ be a positive finite Borel measure on $\Sigma^\infty$, and let $\mu^\w$ be the image measure of $\mu$ given by~\eqref{def_ua}. For each integer $ k>0 $, we assume that
$$
 c_{k,i}'|x-y|\leq |S_{k,i}(x)-S_{k,i}(y)| \leq  c_{k,i}|x-y|, \qquad \textit{ for }x,y \in J,
$$
where $c_{k,i}<1$,  for all $i=1,2,\ldots, n_k$.  Let $d_q^-$ be given by \eqref{def_dq-} with respect to $(c_{k,1}',\ldots,c_{k,n_k}')$ and let $d_q^+$ be given by  \eqref{def_dq+} with respect to $(c_{k,1},\ldots,c_{k,n_k})$.  Suppose that  $d_q^-=d_q^+=d_q$  for all $q>0$. Then for all $q>0$, the generalized $q$-dimension $D_q(\mu^\w)$ exists and
$$
D_q(\mu^\w)=\min\{d_q,d\}.
$$
\end{cor}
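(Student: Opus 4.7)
The plan is to combine Theorem \ref{thm_DqU} and Theorem \ref{thm_DqL} in a pincer, using the two sequences of contraction ratios $(c_{k,i})$ and $(c_{k,i}')$ afforded by the bi-Lipschitz hypothesis on $S_{k,i}$.

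Since $|S_{k,i}(x)-S_{k,i}(y)|\leq c_{k,i}|x-y|$ for all $k$ and $i$, I would first apply Theorem \ref{thm_DqU} with the upper contraction ratios $(c_{k,i})$. Because in the statement of the corollary $d_q^+$ is defined via \eqref{def_dq+} with respect to exactly these ratios, this immediately yields
\[
\overline{D}_q(\mu^\w)\;\leq\;\min\{d_q^+,\,d\}.
\]
Next, since $|S_{k,i}(x)-S_{k,i}(y)|\geq c_{k,i}'|x-y|$ and the gap separation condition is in force, I would apply Theorem \ref{thm_DqL} with the lower contraction ratios $(c_{k,i}')$. Because $d_q^-$ in the corollary is defined via \eqref{def_dq-} with respect to $(c_{k,i}')$, this gives
\[
\underline{D}_q(\mu^\w)\;\geq\;d_q^-.
\]

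Invoking the hypothesis $d_q^-=d_q^+=d_q$ together with the trivial $\underline{D}_q(\mu^\w)\leq\overline{D}_q(\mu^\w)$ then produces the chain
\[
d_q\;=\;d_q^-\;\leq\;\underline{D}_q(\mu^\w)\;\leq\;\overline{D}_q(\mu^\w)\;\leq\;\min\{d_q^+,\,d\}\;=\;\min\{d_q,\,d\}.
\]
This forces $d_q\leq d$, so $\min\{d_q,d\}=d_q$ and every inequality in the chain collapses to an equality. Hence $D_q(\mu^\w)$ exists and equals $\min\{d_q,d\}$, as required.

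I do not anticipate any real obstacle: the corollary is a clean pincer whose entire content resides in Theorems \ref{thm_DqU} and \ref{thm_DqL}. The only care needed is to note that both theorems are stated uniformly for all $q>0$, so the case $q=1$ (with the logarithmic form of $d_q^\pm$ in \eqref{def_dq-} and \eqref{def_dq+}) is handled by exactly the same argument, with no separate treatment required.
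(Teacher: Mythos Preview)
Your proposal is correct and follows exactly the approach the paper intends: the paper does not give a separate proof but simply introduces the corollary with ``Immediately, we have the following conclusion,'' meaning the pincer of Theorem~\ref{thm_DqU} (applied with the upper ratios $c_{k,i}$) and Theorem~\ref{thm_DqL} (applied with the lower ratios $c_{k,i}'$), combined via $d_q^-=d_q^+=d_q$. Your observation that the resulting chain forces $d_q\le d$, so that $\min\{d_q,d\}=d_q$, is a clean way to close the argument.
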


Recall that $E$ is a nonautonomous similar set, that is, with
$$
 |S_{k,i}(x)-S_{k,i}(y)| =  c_{k,i}|x-y|, \qquad  \textit{ for }x,y \in J,
$$
where $c_{k,i}<1$,  for all $i=1,2,\ldots, n_k$ and for all $k>0 $.  Due to the excellent geometric properties of  nonautonomous similar sets,  we are able to find the  generalized $q$-dimensions formulas under SSC instead of GSC.

\begin{thm}\label{thm_MS}
Let $E$ be a nonautonomous similar set given by ~\eqref{attractor} with strong separation condition satisfied. Let $\mu$ be  a positive finite Borel measure on $\Sigma^\infty$, and let $\mu^\w$ be the image measure of $\mu$ given by~\eqref{def_ua}. Then for all $q>0$,
we have
$$
\underline{D}_q(\mu^\w)=\min\{d_q^-,d\}, \quad \textit{ and }\quad
\overline{D}_q(\mu^\w)=\min\{d_q^+,d\}.
$$
\end{thm}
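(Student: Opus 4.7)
The plan is to prove Theorem~\ref{thm_MS} in two matching halves. The upper bounds $\overline{D}_q(\mu^\w)\leq\min\{d_q^+,d\}$ and $\underline{D}_q(\mu^\w)\leq\min\{d_q^-,d\}$ need no separation and follow immediately from Theorem~\ref{thm_DqU}, combined with the universal ceiling $D_q(\nu)\leq d$. All the work is therefore on the matching lower bounds, which must now be established under SSC, strictly weaker than the GSC hypothesis of Theorem~\ref{thm_DqL}.

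The starting point is the identity $\mu^\w(Q)=\sum_{\bu\in I(Q)}\mu(\mathcal{C}_\bu)$ for each $r$-mesh cube $Q$, where $I(Q)=\{\bu\in\Sigma^*(s,r):J_\bu\cap Q\neq\emptyset\}$. This is a direct consequence of SSC, which forces the sets $\{J_\bu:\bu\in\Sigma^*(s,r)\}$ to be pairwise disjoint and hence $\Pi^{-1}(J_\bu)=\mathcal{C}_\bu$. Writing
\begin{equation*}
S(r)=\sum_Q\mu^\w(Q)^q, \qquad A(r)=\sum_{\bu\in\Sigma^*(s,r)}\mu(\mathcal{C}_\bu)^q,
\end{equation*}
the crux of the argument is the two-sided comparison $c_1 A(r)\leq S(r)\leq c_2 A(r)$ with constants independent of $r$. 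Since $c_*r<c_\bu\leq r$ on the cut set, such a comparison converts the defining liminf/limsup sums appearing in $d_q^\pm$ into the mesh-cube sums appearing in $\underline{D}_q,\overline{D}_q$, and together with the ceiling $d$ yields the two equalities. The borderline case $q=1$ is recovered from the $q\neq 1$ estimates by a standard differentiation/limiting argument on the $L^q$-spectrum.

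The main obstacle I would need to overcome is the following. For $q\geq 1$, applying Jensen's inequality to $\mu^\w(Q)^q=(\sum_{\bu\in I(Q)}\mu(\mathcal{C}_\bu))^q$ requires a uniform cardinality bound $|I(Q)|\leq M_0$, and the symmetric bound is needed to control $A(r)$ by $S(r)$ through the reverse power-mean inequality in the range $0<q\leq 1$. Under GSC this bound is automatic, but under SSC the sibling gaps $\delta_k=\min_{a\neq b}\mathrm{dist}(S_{k,a}(J),S_{k,b}(J))$ can shrink with $k$, so a priori arbitrarily many $J_\bu$ can pile up inside a single $Q$. The remedy, critically exploiting the similar structure, is a volume-packing estimate: since $J$ has non-empty interior, fix $x_0$ and $r_0>0$ with $\overline{B(x_0,r_0)}\subset J$; similarity then gives $B(\Psi_\bu(x_0),c_\bu r_0)\subset J_\bu$, and the SSC-disjointness of $\{J_\bu\}$ descends to these interior balls. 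Since $\bu\in I(Q)$ forces $c_\bu\geq c_*r$ and $J_\bu\subset Q'$, the $(|J|r)$-neighbourhood of $Q$, the disjoint balls each have radius at least $c_*r_0 r$ while all lying in a region of Lebesgue measure $O(r^d)$, yielding $|I(Q)|\leq M_0=M_0(d,c_*,r_0,|J|)$ independent of $r$. With this uniform cardinality bound in hand, the comparison $S(r)\asymp A(r)$ follows by Jensen for $q\geq 1$ and by sub-additivity of $x\mapsto x^q$ for $0<q\leq 1$, essentially reproducing the mechanism of Theorem~\ref{thm_DqL} with the volume-packing estimate playing the role that GSC played there.
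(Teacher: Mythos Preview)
Your approach is essentially the same as the paper's: reduce to Theorem~\ref{thm_DqU} for the upper bounds, and for the lower bounds establish the two-sided comparison $S(r)\asymp A(r)$ via Jensen and a uniform bound on the number of cut-set cylinders meeting a mesh cube. The paper simply asserts ``each $r$-cube intersects at most $N$ basic sets in $\{J_\bu:\bu\in\Sigma^*(s,r)\}$, where the constant $N$ only depends on $J$ and $c_*$''; your explicit volume-packing argument (interior ball of radius $c_\bu r_0\geq c_* r_0 r$ inside each $J_\bu$, disjoint by SSC, all contained in an $O(r)$-neighbourhood of $Q$) is exactly the standard justification of that assertion in the similar-map setting.

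Two small corrections. First, the claimed ``identity'' $\mu^\w(Q)=\sum_{\bu\in I(Q)}\mu(\mathcal{C}_\bu)$ is only the inequality $\leq$, since a given $J_\bu$ may straddle several mesh cubes; what SSC actually gives is $\mu^\w(J_\bu)=\mu(\mathcal{C}_\bu)$ and the disjoint decomposition $\mu^\w(Q)=\sum_\bu \mu^\w(Q\cap J_\bu)$. The paper works with $\mu^\w(Q\cap J_\bu)$ throughout and applies Jensen in both directions (once using the bound $|I(Q)|\leq N$, once using that $J_\bu$ meets at most $2^d$ cubes), which is what your comparison $S(r)\asymp A(r)$ really rests on. Second, the paper handles $q=1$ by the same direct Jensen argument (with $x\log x$ in place of $x^q$), not by a limiting/differentiation argument on the $L^q$-spectrum; the latter would require additional continuity properties of $d_q^\pm$ in $q$ that are not established here.
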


To find the generalized $q$-dimensions of measures supported on nonautonomous similar sets, the key is to show the identical of  $d_q^-$ and $d_q^+$ given by  \eqref{def_dq-} and \eqref{def_dq+} with respect to contraction ratios $(c_{k,1},c_{k,2},\cdots, c_{k,n_{k}})$, and the following conclusion is a direct consequence of Theorem \ref{thm_MS}.
\begin{cor}\label{corNAss}
Let $E$ be a nonautonomous similar set  given by ~\eqref{attractor} with strong separation condition satisfied. Let $\mu$ be a positive finite Borel measure on $\Sigma^\infty$, and let $\mu^\w$ be the image measure of $\mu$ given by~\eqref{def_ua}.   Suppose that  $d_q^-=d_q^+=d_q$  for all $q>0$.
Then for all $q>0$, the generalized $q$-dimension exists and
$$
D_q(\mu^\w)=\min\{d_q,d\}.
$$
\end{cor}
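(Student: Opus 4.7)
The plan is to deduce the corollary immediately from Theorem \ref{thm_MS}. Since all the analytic work is absorbed into that theorem, the role of the hypothesis $d_q^-=d_q^+=d_q$ is simply to force the liminf and limsup in the definition of the generalized $q$-dimension to coincide.

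First I would fix $q>0$ and apply Theorem \ref{thm_MS}, whose hypotheses are exactly those we have here, namely that $E$ is a nonautonomous similar set satisfying SSC. This yields
$$
\underline{D}_q(\mu^\w)=\min\{d_q^-,d\},\qquad \overline{D}_q(\mu^\w)=\min\{d_q^+,d\}.
$$
Inserting the assumption $d_q^-=d_q^+=d_q$, both right-hand sides collapse to $\min\{d_q,d\}$, so $\underline{D}_q(\mu^\w)=\overline{D}_q(\mu^\w)=\min\{d_q,d\}$. By the definitions \eqref{gendim} for $q\neq 1$ and \eqref{infdim} for $q=1$, the quantities $\underline{D}_q(\mu^\w)$ and $\overline{D}_q(\mu^\w)$ are the liminf and limsup of the same expression, so their coincidence means the relevant limit exists, and by definition this common value is $D_q(\mu^\w)$. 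Hence $D_q(\mu^\w)$ exists and equals $\min\{d_q,d\}$, as claimed.

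There is no real obstacle in the argument itself; the corollary is a formal specialisation of Theorem \ref{thm_MS}, and all the delicate estimates (covering arguments for the upper bound, the use of SSC for the matching lower bound on nonautonomous similar sets) have already been discharged in the proofs of Theorems \ref{thm_DqU}, \ref{thm_DqL} and \ref{thm_MS}. The one point worth flagging is that the hypothesis $d_q^-=d_q^+$ can be genuinely nontrivial to verify in concrete examples, since it depends on convergence properties of the sequences of probability vectors $\{\bp_k\}$ and contraction ratios $\{(c_{k,1},\ldots,c_{k,n_k})\}$, but checking it is outside the scope of the present statement.
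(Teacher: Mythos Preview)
Your proof is correct and matches the paper's approach exactly: the paper states Corollary~\ref{corNAss} as ``a direct consequence of Theorem~\ref{thm_MS}'' without giving a separate proof, and your argument spells out precisely this direct consequence.
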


Moreover, if $\mu$ is a Bernoulli measure on $\Sigma^\infty$ define by \eqref{mP} with a sequence of probability vectors $\{\bp_k\}_{k=1}^\infty$,  then  the formulas of the critical values $d_q^-$ and $d_q^+$ are simplified into the following forms.

\begin{pro}\label{leq_dq}
Let $E$ be a nonautonomous similar set, and let $\mu$ be a Bernoulli measure on $\Sigma^\infty$ define by \eqref{mP} with a sequence of probability vectors $\{\bp_k\}_{k=1}^\infty$. Then for $q>1$,
\begin{eqnarray}
d_q^- &=& \sup \Big\{s: \limsup_{k\to \infty} \prod_{i=1}^k \sum_{j=1}^{n_i}c_{i,j}^{s(1-q)}p_{i,j}^q<\infty\Big\}, \label{dq-eq}\\
d_q^+ &\leq& \sup \Big\{s: \liminf_{k\to \infty}\ \prod_{i=1}^k \sum_{j=1}^{n_i}c_{i,j}^{s(1-q)}p_{i,j}^q<\infty\Big\} \label{dq+leq1};
\end{eqnarray}
for $0 < q < 1$,
\begin{eqnarray}
d_q^- &\geq& \inf \Big\{s: \liminf_{k\to \infty} \prod_{i=1}^k \sum_{j=1}^{n_i}c_{i,j}^{s(1-q)}p_{i,j}^q<\infty\Big\}, \label{dq-geq1}\\
d_q^+ &=& \inf \Big\{s: \limsup_{k\to \infty} \prod_{i=1}^k \sum_{j=1}^{n_i}c_{i,j}^{s(1-q)}p_{i,j}^q<\infty\Big\}. \label{dq+eq}
\end{eqnarray}
and for $q=1$,
\begin{eqnarray}
d_q^- &\geq& \sup \Big\{s: \limsup_{k\to \infty} \sum_{i=1}^k \sum_{j=1}^{n_i} \big(p_{i,j}\log p_{i,j} - s\ p_{i,j}\log c_{i,j}\big)<\infty\big\}; \label{dq-geq2}\\
d_q^+ &\leq& \sup \Big\{s: \liminf_{k\to \infty}\sum_{i=1}^k \sum_{j=1}^{n_i} \big(p_{i,j}\log p_{i,j} - s\ p_{i,j}\log c_{i,j}\big)<\infty\Big\}. \label{dq+leq2}
\end{eqnarray}
\end{pro}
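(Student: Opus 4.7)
The plan is to replace the cut-set sums that define $d_q^\pm$ by the finite products $P_k(s) := \prod_{i=1}^{k} Q_i(s)$, where $Q_i(s) := \sum_{j=1}^{n_i} c_{i,j}^{s(1-q)} p_{i,j}^q$, via an algebraic identity. Since $\mu$ is Bernoulli, $\sum_{\bv \in \Sigma^K} c_\bv^{s(1-q)} \mu(\mathcal{C}_\bv)^q = P_K(s)$ by multilinearity. Decomposing $\Sigma^K$ through the cut-set $A_r := \Sigma^*(s,r)$, for any $K \geq k_{\max}(r)$, yields
\begin{equation*}
P_K(s) \;=\; \sum_{\bu \in A_r} c_\bu^{s(1-q)} \mu(\mathcal{C}_\bu)^q \prod_{j=|\bu|+1}^{K} Q_j(s),
\end{equation*}
and since $\prod_{j=|\bu|+1}^{K} Q_j(s) = P_K(s)/P_{|\bu|}(s)$, dividing by $P_K(s)$ produces the clean identity $\sum_{\bu \in A_r} c_\bu^{s(1-q)} \mu(\mathcal{C}_\bu)^q / P_{|\bu|}(s) = 1$.

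Writing $F(r,s) := \sum_{\bu \in A_r} c_\bu^{s(1-q)} \mu(\mathcal{C}_\bu)^q$, the identity exhibits $F(r,s)$ as the expectation of $P_{|\bu|}(s)$ under the probability measure $\pi_\bu := c_\bu^{s(1-q)} \mu(\mathcal{C}_\bu)^q / P_{|\bu|}(s)$ on $A_r$. The assumption $c_*\,r \leq c_\bu \leq r$ forces $|\bu| \in [k_{\min}(r), k_{\max}(r)]$ with both endpoints tending to infinity as $r \to 0$, so I obtain the sandwich
\begin{equation*}
\min_{k_{\min}(r) \leq \ell \leq k_{\max}(r)} P_\ell(s) \;\leq\; F(r,s) \;\leq\; \max_{k_{\min}(r) \leq \ell \leq k_{\max}(r)} P_\ell(s).
\end{equation*}
Sending $r \to 0$, the shifting windows eventually cover every large $k$, so $\limsup_r F(r,s) \leq \limsup_k P_k(s)$ and $\liminf_r F(r,s) \geq \liminf_k P_k(s)$.

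These window bounds give the proposition's inequalities by comparing critical values in $s$. For $q > 1$, the upper bound $\limsup_r F \leq \limsup_k P_k$ gives the containment $\{s:\limsup_k P_k(s) < \infty\} \subseteq \{s:\limsup_r F(r,s) < \infty\}$ and hence the $\geq$ direction of \eqref{dq-eq}; the contrapositive of $\liminf_k P_k = \infty \Rightarrow \liminf_r F = \infty$ gives \eqref{dq+leq1}. The inequalities \eqref{dq-geq1} and the $\geq$ of \eqref{dq+eq} for $0 < q < 1$ follow by the same mechanism after noting that $P_k(s)$ is decreasing in $s$ (so $\sup$'s become $\inf$'s). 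For the matching $\leq$ direction in \eqref{dq-eq} (and of \eqref{dq+eq}), given a subsequence $k_n \to \infty$ with $P_{k_n}(s) \to \infty$, I would choose a maximizing word $\bu_n^* \in \Sigma^{k_n}$ and set $r_n := c_{\bu_n^*}$, so that $\bu_n^* \in A_{r_n}$ and $F(r_n, s) \geq c_{\bu_n^*}^{s(1-q)} \mu(\mathcal{C}_{\bu_n^*})^q \geq P_{k_n}(s)/\prod_{i=1}^{k_n} n_i$; combined with the uniform bound $Q_j(s) \leq c_*^{s(1-q)}$ for $q > 1$ on the spread of $P_k$ across the alphabet, this forces $F(r_n, s) \to \infty$.

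For $q = 1$, the same framework applies with the additive cumulant $T_k(s) := \sum_{i=1}^{k} \sum_{j=1}^{n_i}(p_{i,j} \log p_{i,j} - s\, p_{i,j} \log c_{i,j})$ in place of $\log P_k$. The Bernoulli factorization $\log \mu(\mathcal{C}_\bu) = \sum_{i=1}^{|\bu|} \log p_{i,u_i}$ and the analogous cut-set decomposition yield
\begin{equation*}
\sum_{\bu \in A_r} \mu(\mathcal{C}_\bu) \log\bigl(c_\bu^{-s} \mu(\mathcal{C}_\bu)\bigr) \;=\; \sum_{\bu \in A_r} \mu(\mathcal{C}_\bu)\, T_{|\bu|}(s);
\end{equation*}
since $\mu$ restricted to the cut-set $A_r$ is a probability measure, the left side is a convex combination of $T_\ell(s)$ for $\ell \in [k_{\min}(r), k_{\max}(r)]$, and the same window-limit argument delivers \eqref{dq-geq2} and \eqref{dq+leq2}. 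The hardest step, and the main obstacle, is the equality in \eqref{dq-eq} and \eqref{dq+eq}: the sandwich bound immediately produces only the one-sided direction, and matching it requires exhibiting an unbounded subsequence of $P_k(s)$ inside $F(r,s)$ at a carefully chosen scale $r$, which needs quantitative control of $Q_j(s)$ beyond the cut-set identity itself.
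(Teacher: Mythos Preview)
Your convex-combination identity $\sum_{\bu\in A_r} c_\bu^{s(1-q)}p_\bu^q/P_{|\bu|}(s)=1$ is correct and handles the four genuine inequalities \eqref{dq+leq1}, \eqref{dq-geq1}, \eqref{dq-geq2}, \eqref{dq+leq2} (and one direction each of \eqref{dq-eq}, \eqref{dq+eq}) more cleanly than the paper, which instead iteratively collapses the longest words of the cut-set down to a fixed level by comparing $\sum_j c_{K_1,j}^{s(1-q)}p_{K_1,j}^q$ with $1$. Your $q=1$ identity $G(r,s)=\sum_{\bu\in A_r} p_\bu\,T_{|\bu|}(s)$ is also correct (it follows from the telescoping $T_K=G(r,s)+\sum_{\bu} p_\bu(T_K-T_{|\bu|})$), and again replaces the paper's case-by-case collapsing.

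The gap is the reverse direction of \eqref{dq-eq} and \eqref{dq+eq}. Your maximizing-word argument does not work: the bound $F(r_n,s)\geq P_{k_n}(s)/\prod_{i\leq k_n} n_i$ is vacuous because $\{n_k\}$ is not assumed bounded, and the inequality $Q_j(s)\leq c_*^{s(1-q)}$ only bounds $P_{\ell+1}/P_\ell$ from \emph{above}, so a large $P_{k_n}$ says nothing about $P_\ell$ for nearby $\ell$ in the window, nor about where the weights $\pi_\bu$ sit. Your sandwich cannot close this on its own.

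The paper closes the gap with a different mechanism: it introduces the auxiliary critical value $d_2=\sup\{s:\sum_k P_k(s)<\infty\}$ and proves a cycle $d_1\leq d_2\leq d_q^-\leq d_1$ (where $d_1$ is your product-form supremum). The key step is a shift $s\mapsto s_1<s$: if $\limsup_r F(r,s)=M<\infty$ then $F(\rho^l,s_1)\leq M\rho^{l(s-s_1)(q-1)}$ for fixed $\rho\in(c^*,1)$, and since $\Sigma^*=\bigcup_l \Sigma^*(s,\rho^l)$ one obtains $P_k(s_1)\leq \sum_l F(\rho^l,s_1)<\infty$ uniformly in $k$. This exponential-in-$s$ decay is what lets one pass from boundedness of cut-set sums to boundedness of level sums, and is the missing ingredient in your proposal.
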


Since the nonautonommous similar measure $\mu^\w$ is the image measure of the Bernoulli measure $\mu$,
the above proposition gives a simpler formula for the generalizaed $q$-dimensions of nonautonommous similar measures.

Given a nonautonomous similar set $E$, even if all  $\Xi_{k}$ are identical, that is,
$$
\Xi_{k}=\Xi=(c_{1},c_{2},\cdots, c_{n_0}),
$$
the attractor $E$ is not necessarily a self-similar set since the translations at each level may still vary differently for the same contract ratio, see \cite{GM22,Wen00} for details. For such sets $E$, the critical values  $d_q^+$ and $d_q^-$ are equal, and the generalized $q$-dimensions  of  nonautonomous similar measures supported on $E$ may be  easily computed in the following conclusion.

 \begin{cor}\label{cor_MS}
Let $E$ be a nonautonomous similar set  given by ~\eqref{attractor} with  $\Xi_{k}=\Xi=(c_{1},c_{2},\cdots, c_{n_0})$ for all $k>0$. Let $\mu^\w$ be the nonautonomous similar measure  with $\bp_k=\bp=(p_{1},p_{2},\cdots, p_{n_0})$ for all $k>0$. Then $d_q^-=d_q^+=d_q$ for $q>0$,
where $d_q$ is the unique solution to
$$
\sum_{i=1}^{n_0} c_i^{d_q(1-q)} p_i^q=1 \quad   \textit{  for $q\neq 1$  \qquad  and  \qquad } 
d_1=\frac{\sum_{i=1}^{n_0} p_i \log p_i }{\sum_{i=1}^{n_0} p_i \log c_i }.
$$
Moreover, suppose that $E$ satisfies SSC. Then for $q>0$,  the generalized $q$-dimension $D_q(\mu^\w)$ exists and
$$
D_q(\mu^\w)=\min\{d_q,d\}.
$$
\end{cor}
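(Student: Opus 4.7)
The plan is to reduce everything to Proposition~\ref{leq_dq} and exploit the homogeneity $\Xi_k\equiv\Xi$, $\bp_k\equiv\bp$, which collapses the products and sums appearing there into geometric sequences in $k$. For $q\neq 1$ set
$$
a_q(s):=\sum_{j=1}^{n_0} c_j^{s(1-q)} p_j^q,
$$
so that $\prod_{i=1}^k \sum_{j=1}^{n_i} c_{i,j}^{s(1-q)} p_{i,j}^q = a_q(s)^k$, and for $q=1$ set
$$
A(s):=\sum_{j=1}^{n_0}\bigl(p_j\log p_j - s\, p_j\log c_j\bigr),
$$
so that $\sum_{i=1}^k\sum_{j=1}^{n_i}(p_{i,j}\log p_{i,j}-s\,p_{i,j}\log c_{i,j})=k A(s)$. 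The function $a_q$ is strictly monotone and continuous on $(0,\infty)$, increasing for $q>1$ and decreasing for $0<q<1$, with $a_q(0)=\sum p_j^q$ and the appropriate limit at infinity, while $A$ is linear and strictly increasing with $A(0)=\sum p_j\log p_j\le 0$. These elementary features guarantee a unique positive $d_q$ solving $a_q(d_q)=1$ (for $q\ne 1$) or $A(d_1)=0$, and a short calculation recovers the closed forms stated in the corollary.

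Next I exploit the fact that $a_q(s)^k$ and $kA(s)$ are \emph{deterministic} sequences in $k$, so that $\limsup$ and $\liminf$ automatically agree:
$$
\limsup_{k\to\infty} a_q(s)^k<\infty \iff \liminf_{k\to\infty} a_q(s)^k<\infty \iff a_q(s)\le 1,
$$
and similarly $\limsup_{k\to\infty} k A(s)<\infty \iff \liminf_{k\to\infty} k A(s)<\infty \iff A(s)\le 0$. Substituting these equivalences into the characterisations supplied by Proposition~\ref{leq_dq} identifies both $d_q^-$ and $d_q^+$ with $d_q$, up to the one-sided inequalities in that proposition. For example, when $q>1$, \eqref{dq-eq} gives $d_q^-=\sup\{s:a_q(s)\le 1\}=d_q$, while \eqref{dq+leq1} gives only $d_q^+\le d_q$; combining with the trivial inequality $d_q^-\le d_q^+$ (immediate from $\liminf\le\limsup$ applied to \eqref{def_dq-}--\eqref{def_dq+}) forces $d_q^-=d_q^+=d_q$. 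The cases $0<q<1$ and $q=1$ are handled by the same sandwich argument using \eqref{dq-geq1}--\eqref{dq+eq} and \eqref{dq-geq2}--\eqref{dq+leq2}, respectively.

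Once $d_q^-=d_q^+=d_q$ is established, the identity $D_q(\mu^\w)=\min\{d_q,d\}$ under SSC follows immediately from Theorem~\ref{thm_MS}. The only delicate point—which I regard as the main obstacle, though a mild one—is pairing each half-sided bound supplied by Proposition~\ref{leq_dq} with the correct direction of $d_q^-\le d_q^+$ so that the sandwich actually closes. This has to be checked separately in each of the three regimes $q>1$, $0<q<1$, $q=1$, since the defining formulas in \eqref{def_dq-}--\eqref{def_dq+} switch between $\sup$ and $\inf$ and the roles of $\limsup$ and $\liminf$ swap accordingly; once verified, what remains is routine single-variable analysis of $a_q$ and $A$.
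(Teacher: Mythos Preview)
Your proposal is correct and follows essentially the same route as the paper: both arguments collapse the products in Proposition~\ref{leq_dq} to powers $a_q(s)^k$ (and sums to $kA(s)$ for $q=1$), observe that $\limsup$ and $\liminf$ coincide for these deterministic sequences, and then sandwich $d_q\le d_q^-\le d_q^+\le d_q$ using the one-sided bounds of that proposition before invoking Theorem~\ref{thm_MS} (equivalently, Corollary~\ref{corNAss}). The only cosmetic difference is that the paper cites Corollary~\ref{corNAss} rather than Theorem~\ref{thm_MS} directly, which is immaterial.
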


\subsection{Generalized  $q$-dimensions of measures on nonautonomous affine sets}
Since affine mappings have different contraction ratios in different directions, the geometric properties of nonautonomous affine sets are more complex than nonautonomous similar sets. Although we have obtained some bounds for generalized $q$-dimensions of measures supported on nonautonomous attractors, these bounds may significantly be improved for nonautonomous affine sets by applying singular value functions.

Let $T\in\mathcal{L}(\R^d,\R^d)$ be a contracting and non-singular linear mapping. The {\em singular values} $\alpha_1(T),\alpha_2(T),\ldots$, $\alpha_d(T)$ of $T$ are the lengths of the (mutually perpendicular) principle semi-axes of $T(B)$, where $B$ is the unit ball in $\R^d$. Equivalently they are the positive square roots of the eigenvalues of $T^\ast T$, where
$T^\ast$ is the transpose of $T$. Conventionally, we write that
$$
1>\alpha_1(T)\geq \ldots\geq \alpha_d(T)>0.
$$
For $0\leq s \leq d$, the \textit{singular value function} of $T$ is defined by
\begin{equation}\label{def svf}
\psi^s(T)=\alpha_1(T)\alpha_2(T)\ldots \alpha_{m-1}(T) \alpha_m^{s-m+1}(T),
\end{equation}
where $m$ is the integer such that $m-1<s\leq m$. If $s > d$, for technical convenience,  we write
$$
\psi^s(T)= (\alpha_1(T)\alpha_2(T)\ldots \alpha_d(T))^{s/d}=(\det T)^{s/d}.
$$
The singular value function $\psi^s(T)$ is continuous and strictly decreasing in $s$, and it  is submultiplicative in $T$, that is, for all $s\geq 0$,
\begin{equation}\label{subphi}
\psi^s(TU)\leq \psi^s(T)\psi^s(U),
\end{equation}
for all $T,U\in \mathcal{L}(\R^d,\R^d)$, see ~\cite{Falco88} for details.

Let $\{\Xi_k\}_{k=1}^\infty$ be given by \eqref{Xi}. We write
\begin{eqnarray}\label{alpha}
\alpha_+&=&\sup\{\alpha_1(T_{k,i}) : 1\leq i\leq n_k  \textit{,  } k \in \mathbb{N}\}, \\
\alpha_-&=&\inf\{\alpha_d(T_{k,i}) : 1\leq i\leq n_k  \textit{,  } k \in \mathbb{N}\}. \nonumber
\end{eqnarray}
Immediately, for each $\bu\in \Sigma^k$, the singular value function $\psi^s$ of $T_\bu$ is bounded by
\begin{equation}\label{alhpha+-}
\alpha_-^{sk}\leq \psi^s(T_\bu) \leq \alpha_+^{sk}.
\end{equation}

     From now on, we always assume that the matrices in $\Xi_k$ for all $ k \in \mathbb{N}$ are nonsingular, and
\begin{equation} \label{aspalpha}
0< \alpha_- \leq   \alpha_+<1.
\end{equation}

Given reals $s>0$ and $0<r <1$. Let $m$ be the integer such that $m-1<s\leq m$, and we define
\begin{equation} \label{srcutsetaff}
\Sigma^*(s, r)=\{\mathbf{u}=u_1\ldots u_k \in \Sigma^*: \alpha_m(T_\mathbf{u})  \leq r < \alpha_m(T_{\bu^-}) \}.
\end{equation}
The set $\Sigma^*(s, r)$ is a cut-set of $\Sigma^\infty$. For each $\mathbf{u}\in \Sigma^*(s, r)$, by \eqref{subphi} and \eqref{alpha},  we have that
$$
\alpha_- r < \alpha_m(T_\mathbf{u}) \leq r.
$$

Similarly, we write
\begin{equation}\label{def_adq-}
    d_q^- = \left\{
    \begin{aligned}
    & \sup \Big\{s: \limsup_{r\to 0}\sum_{\bu\in\Sigma^*(s, r)}\psi^s(T_\bu)^{1-q}\mu(\mathcal{C}_\bu)^q<\infty\Big\},\qquad\qquad  \text{ for } q>1;\\
    & \inf \Big\{s: \liminf_{r\to 0}\sum_{\bu\in\Sigma^*(s, r)}\psi^s(T_\bu)^{1-q}\mu(\mathcal{C}_\bu)^q<\infty\Big\},  \qquad\qquad\ \  \text{ for }  0<q<1;\\
    & \sup \Big\{s: \limsup_{r\to 0}\sum_{\bu\in\Sigma^*(s, r)}\mu(\mathcal{C}_\bu)\log\Big(\psi^s(T_\bu)^{-1} \mu(\mathcal{C}_\bu)\Big)<\infty\Big\}, \! \text{ for } q=1.
    \end{aligned}
    \right.
\end{equation}
and
\begin{equation}\label{def_adq+}
    d_q^+ = \left\{
    \begin{aligned}
    & \sup \Big\{s: \liminf_{r\to 0}\sum_{\bu\in\Sigma^*(s, r)}\psi^s(T_\bu)^{1-q}\mu(\mathcal{C}_\bu)^q<\infty\Big\}, \qquad\qquad\ \text{ for } q>1;\\
    & \inf \Big\{s: \limsup_{r\to 0}\sum_{\bu\in\Sigma^*(s, r)}\psi^s(T_\bu)^{1-q}\mu(\mathcal{C}_\bu)^q<\infty\Big\},  \qquad\qquad\  \text{ for }  0<q<1;\\
    & \sup \Big\{s: \liminf_{r\to 0}\sum_{\bu\in\Sigma^*(s, r)} \mu(\mathcal{C}_\bu) \log\Big(\psi^s(T_\bu)^{-1}\mu(\mathcal{C}_\bu)\Big)<\infty\Big\}, \text{ for } q=1.
    \end{aligned}
    \right.
\end{equation}

The following theorem shows that $d_q^+$ and $d_q^-$ are respectively upper bounds for the upper and lower generalized $q$-dimension of  Borel measures supported on nonautonomous affine sets.
\begin{thm}\label{Dq_ub}
Let $E^\w$ be the nonautonomous affine set given by ~\eqref{attractor}. Let $\mu$  be a positive finite Borel measure on  $\Sigma^\infty$, and let $\mu^\w$ be the image measure of $\mu$ given by~\eqref{def_ua}. Then
$$
\overline{D}_q(\mu^\w) \leq \min\{d_q^+,d\}, \quad \textit{ and }\quad
\underline{D}_q(\mu^\w) \leq \min\{d_q^-,d\}
$$
 for all $\w$.
\end{thm}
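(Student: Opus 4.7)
The strategy is to relate the mesh-cube sum $\sum_{Q\in\mathcal{M}_r}\mu^\w(Q)^q$ to the cylinder-indexed sum appearing in the definitions of $d_q^\pm$, and then to convert this relation into a bound on $\overline{D}_q(\mu^\w)$ and $\underline{D}_q(\mu^\w)$ by taking logarithms and carefully tracking the sign of $(q-1)\log r$.

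The central geometric ingredient is a covering estimate: for each $\bu\in\Sigma^*(s,r)$ with $m-1<s\leq m$, the number of $r$-mesh cubes that meet $J_\bu$ is at most a constant multiple of $\psi^s(T_\bu)/r^s$. This follows from the fact that $J_\bu$ lies in an ellipsoid whose semi-axes are proportional to $\alpha_1(T_\bu),\ldots,\alpha_d(T_\bu)$, combined with the cut-set condition $\alpha_m(T_\bu)\leq r$ and the reverse estimate $\alpha_m(T_\bu)\geq\alpha_- r$. The latter comes from a standard singular value inequality $\alpha_k(AB)\geq\alpha_k(A)\alpha_d(B)$ applied to the factorization $T_\bu=T_{\bu^-}T_{|\bu|,u_{|\bu|}}$, together with the defining property $\alpha_m(T_{\bu^-})>r$. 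Routine bookkeeping with $\psi^s$ then absorbs everything into the factor $\psi^s(T_\bu)/r^s$.

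Next I would decompose $\mu^\w(Q)=\sum_{\bu\in\Sigma^*(s,r)}\mu\{\bv\in\mathcal{C}_\bu:\Pi^\w(\bv)\in Q\}$ across the cut-set, interchange the order of summation after raising to the $q$-th power, and apply Jensen-type inequalities to the inner sums. For $q>1$ the superadditivity $(\sum a_i)^q\geq\sum a_i^q$ followed by the power-mean bound $\sum_Q a_Q^q\geq(\sum_Q a_Q)^q/N^{q-1}$ yields a lower bound of the shape $\sum_Q\mu^\w(Q)^q\geq C\,r^{s(q-1)}\sum_{\bu\in\Sigma^*(s,r)}\psi^s(T_\bu)^{1-q}\mu(\mathcal{C}_\bu)^q$; for $0<q<1$ the subadditivity $(\sum a_i)^q\leq\sum a_i^q$ together with the concave Jensen inequality gives the reverse bound of the same shape; the $q=1$ case uses the convexity of $x\log x$ in place of $x^q$ to produce an analogous relation between $\sum_Q\mu^\w(Q)\log\mu^\w(Q)$ and $\sum_\bu\mu(\mathcal{C}_\bu)\log(\psi^s(T_\bu)^{-1}\mu(\mathcal{C}_\bu))$. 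Crucially, after interchanging summations no uncontrolled overlap factor of the form $\#\{\bu:J_\bu\cap Q\neq\emptyset\}$ appears, so no separation hypothesis on $E^\w$ is required.

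Taking logarithms of the resulting inequality and dividing by $(q-1)\log r$, which is negative for $q>1$ and positive for $0<q<1$, then finishes the argument. For any $s>d_q^+$ the defining property of $d_q^+$ forces the cylinder sum either to tend to $\infty$ (when $q>1$) or to stay bounded above (when $0<q<1$); in either regime, after division by $(q-1)\log r$, the error term $\log(\text{cylinder sum})/((q-1)\log r)$ has nonpositive $\limsup$ as $r\to 0$, yielding $\overline{D}_q(\mu^\w)\leq s$ and hence $\overline{D}_q(\mu^\w)\leq d_q^+$ after letting $s\downarrow d_q^+$. A parallel subsequential argument using $\liminf$ and $s>d_q^-$ gives $\underline{D}_q(\mu^\w)\leq d_q^-$. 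The trivial cardinality bound $\#\{Q\in\mathcal{M}_r:\mu^\w(Q)>0\}\leq C r^{-d}$ combined with Jensen gives the companion bounds $\overline{D}_q(\mu^\w),\underline{D}_q(\mu^\w)\leq d$, producing the required $\min\{d_q^\pm,d\}$. The main technical obstacle I anticipate is the covering estimate in the second paragraph — specifically the use of $\alpha_m(T_\bu)\geq\alpha_- r$ to align the cut-set with the singular value function — together with the uniform handling of the three regimes $q>1$, $q=1$, $0<q<1$ in the Jensen manipulations, where both the direction of the inequalities and the sign of $(q-1)\log r$ change.
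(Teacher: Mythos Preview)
Your proposal is correct and follows essentially the same approach as the paper: the key covering estimate that each $J_\bu$ with $\bu\in\Sigma^*(s,r)$ meets at most $C\,\psi^s(T_\bu)/r^s$ mesh cubes (via the ellipsoid-in-parallelepiped argument and the two-sided bound $\alpha_- r\leq\alpha_m(T_\bu)\leq r$), followed by the cut-set decomposition of $\mu^\w(Q)$, the Jensen/power-mean manipulations split according to the sign of $q-1$, and finally division by $(q-1)\log r$. The paper organizes the computation slightly differently---it first bounds $\mu(\mathcal{C}_\bu)^q$ from above via Jensen over the $N$ covering cubes and then sums over $\bu$ using the power inequality---but the two orderings are equivalent and lead to the same inequality $r^{s(1-q)}\sum_Q\mu^\w(Q)^q\geq C\sum_{\bu\in\Sigma^*(s,r)}\psi^s(T_\bu)^{1-q}\mu(\mathcal{C}_\bu)^q$ (with the reverse for $0<q<1$).
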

We do not have general results for the lower bounds for  the generalized $q$-dimensions in nonautonomous affine settings except the rough one by applying Theorem~\ref{thm_DqL} with the fact $1>\alpha_1(T)\geq \ldots\geq \alpha_d(T)>0$. Therefore we have to consider some variations of nonautonomous affine sets to obtain better lower bounds for generalized $q$-dimensions.

\subsection{Generalized $q$-dimensions of measures on two variations of nonautonomous affine attractors}

It is a challenging problem to find appropriate lower bounds for generalized $q$-dimensions of measures supported on nonautonomous affine attractors.  Inspired by~\cite{Falco10,JPS07,GM}, we consider nonautonomous affine sets in probabilistic language.

Let $\mathcal{D}$ be a bounded region in $\R^d$. For each $\bu\in \Sigma^* $, let $\omega_\bu\in  \mathcal{D} $ be a random vector distributed
according to some Borel probability measure $P_\bu$ that is absolutely continuous with respect to $d$-dimensional Lebesgue measure. We assume that the $\omega_\bu$ are independent identically
distributed random vectors. Let $\mathbf{P} $ denote the product probability measure $\mathbf{P} = \prod_{\bu\in \Sigma^*} P_\bu$ on
the family $\omega =\{\omega_\bu : \bu \in  \Sigma^* \}.$
In this context, for each $\mathbf{u}=u_1\ldots u_k \in \Sigma^*$,  we assume that the translation of $\Psi_{u_j}$ is an element of $\w$, that is,
\begin{equation}
\Psi_{u_j}(x)=T_{j,u_j}x+ \w_{u_1\ldots u_j}, \qquad \w_{u_1\ldots u_j}\in \omega =\{\omega_\bu : \bu \in  \Sigma^* \},
\end{equation}
 for $j=1,2,\ldots, k$. We also assume that the collection of  $J_{\mathbf{u}}=\Psi_{\mathbf{u}}(J)=\Psi_{u_1}\circ  \ldots \circ \Psi_{u_k} (J)$ fulfils the nonautonomous structure, and we call $E^\omega$ the {\it nonautonomous affine set with random translations}.

Next we shows that $d_q^-$ defined by \eqref{def_adq-} is an almost sure lower bound for the lower generalized $q$-dimension of measures on nonautonomous affine sets with random translations for $q>1$.

\begin{thm}\label{thm_Dq1}
Let $E^\w$ be the nonautonomous affine set with random translations. Let $\mu$ be  a positive finite Borel measure on $\Sigma^\infty$, and let $\mu^\w$ be the image measure of $\mu$ given by~\eqref{def_ua}.
Then for $q>1$, we have that for almost all $\w$,
$$
\underline{D}_q(\mu^\w)=\min\{d_q^-,d\},
$$
where
$$
d_q^-=\sup \Big\{s: \sum_{k=1}^\infty \sum_{\bu\in\Sigma^k} \psi^s(T_\bu)^{1-q}\mu(\mathcal{C}_\bu)^q<\infty\Big\}.
$$
\end{thm}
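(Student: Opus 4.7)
The equality $\underline{D}_q(\mu^\w) = \min\{d_q^-, d\}$ will be proved by establishing both inequalities. The upper bound $\underline{D}_q(\mu^\w) \leq \min\{d_q^-, d\}$ should be essentially deterministic: the bound by $d$ is automatic from $\mu^\w$ being a finite Borel measure on $\mathbb{R}^d$, and for $\underline{D}_q(\mu^\w) \leq d_q^-$ I would adapt the cut-set/covering argument of Theorem~\ref{Dq_ub} to the series form of the critical exponent. Specifically, if $s > d_q^-$, so that $\sum_k \sum_{\bu \in \Sigma^k} \psi^s(T_\bu)^{1-q}\mu(\mathcal{C}_\bu)^q$ diverges, I would exhibit a sequence of scales $r_n \to 0$ together with a bounded-multiplicity covering of each basic set $J_\bu^\w$ by $r_n$-mesh cubes, obtaining a lower bound on $\sum_{\mathcal{M}_{r_n}} \mu^\w(Q)^q$ in terms of partial sums of the divergent series, which rules out $\underline{D}_q(\mu^\w) > s$; then let $s \searrow d_q^-$.

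The core of the theorem is the reverse inequality $\underline{D}_q(\mu^\w) \geq \min\{d_q^-, d\}$ almost surely, which crucially uses the randomness. Fix $0 < s < \min\{d_q^-, d\}$ so that $\Phi_s := \sum_k \sum_{\bu \in \Sigma^k} \psi^s(T_\bu)^{1-q}\mu(\mathcal{C}_\bu)^q < \infty$. The strategy, inspired by Falconer's~\cite{Falco10} treatment of almost self-affine measures and adapted to the nonautonomous setting, has three steps. First, since $\mu^\w(Q) \leq \mu^\w(U(x, r))$ for every $x \in Q \in \mathcal{M}_r$,
$$
\sum_{Q \in \mathcal{M}_r} \mu^\w(Q)^q \,\leq\, C_d \int \mu^\w(U(x, r))^{q-1}\, d\mu^\w(x),
$$
where $U(x, r)$ is the cube of side $2r$ centred at $x$. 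Second, using independence and the bounded density of each $\omega_\bu$, establish the pair-distance estimate
$$
\mathbf{P}\{|\Pi^\w(\bu) - \Pi^\w(\bv)| \leq r\} \,\leq\, C_0 \min\big\{1,\, r^s\, \psi^s(T_{\bu\wedge\bv})^{-1}\big\}
$$
for distinct $\bu, \bv \in \Sigma^\infty$ and $0 \leq s \leq d$. The point is that $\Pi^\w(\bu) - \Pi^\w(\bv)$, as a function of the first ``free'' translation past the common prefix $\bu\wedge\bv$, is an affine map through $T_{\bu\wedge\bv}$, so the probability of falling in a ball of radius $r$ is controlled by the singular values of $T_{\bu\wedge\bv}$ via the push-forward of the density.

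Third, take expectations using Fubini and the pair-distance estimate:
$$
\mathbf{E}\!\left[\int \mu^\w(U(x, r))^{q-1}\, d\mu^\w(x)\right] = \int \mathbf{E}\!\left[\mu^\w(U(\Pi^\w(\bu), r))^{q-1}\right] d\mu(\bu).
$$
For $1 < q \leq 2$, the inner expectation is bounded by Jensen's inequality (concavity of $x^{q-1}$) after decomposing $\Sigma^\infty$ into cylinders indexed by $\bu\wedge\bv$, giving the estimate $\leq C\, r^{s(q-1)}\, \Phi_s$. A Markov/Borel--Cantelli argument along the geometric sequence $r_n = \theta^n$ (with any $\theta \in (0,1)$) then yields $\sum_{Q \in \mathcal{M}_{r_n}} \mu^\w(Q)^q \leq C_\w\, r_n^{s(q-1) - \varepsilon}$ for all large $n$ almost surely, and a monotonicity comparison between consecutive scales extends the bound to all $r \to 0$. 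Hence $\underline{D}_q(\mu^\w) \geq s - \varepsilon/(q-1)$ a.s.; taking $\varepsilon \to 0$ and $s \nearrow \min\{d_q^-, d\}$ along a countable dense set of parameters gives the conclusion.

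The main obstacle I anticipate lies in the expectation bound for $q > 2$, where concavity of $x^{q-1}$ fails and Jensen cannot be applied directly; replacing it by a Minkowski-type or iterated-integral variant while still recovering the series $\Phi_s$ is the most delicate point. A secondary subtlety is the uniformity of the constants $C_0$ in the pair-distance estimate near the saturation $s = d$, which is exactly where the truncation $\min\{1, \cdot\}$ is essential; the geometric pushforward computation must be executed with care to ensure the bound behaves correctly in the regime $r \lesssim \alpha_d(T_{\bu\wedge\bv})$.
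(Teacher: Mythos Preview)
Your outline is correct for the upper bound and for $1<q\leq 2$: the upper bound is exactly Theorem~\ref{Dq_ub} (via Proposition~\ref{pro_pdq_eq} to pass between the cut-set and series forms of $d_q^-$), and your Jensen-plus-pair-estimate argument for $1<q\leq 2$ is essentially the same computation the paper carries out in Theorem~\ref{dimmua}. The pair-distance estimate you state is Lemma~\ref{lemE}.

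The genuine gap is precisely where you flag it: for $q>2$ you have not supplied the replacement for Jensen, and a ``Minkowski-type'' bound is not what is used. The paper's device is to pick the integer $n$ with $q-1\leq n<q$ and write
\[
\mathbb{E}\big(\mu^\w(B(\Pi^\w(\bv),r))^{q-1}\big)\leq \big[\mathbb{E}\big(\mu^\w(B(\Pi^\w(\bv),r))^{n}\big)\big]^{(q-1)/n},
\]
then expand the $n$th moment as an $n$-fold integral over $(\bu_1,\ldots,\bu_n)$. The key object is the \emph{multienergy kernel} $\psi^s(\bu_1,\ldots,\bu_n,\bv)^{-1}=\prod_i\psi^s(T_{\bw_i})^{-1}$, the product over the vertices $\bw_i$ of the join set $\bigwedge(\bu_1,\ldots,\bu_n,\bv)$; by iterating Lemma~\ref{lemE} along a tower of $\sigma$-fields adapted to the tree structure one gets $\mathbb{E}\big(\prod_i|\Pi^\w(\bu_i)-\Pi^\w(\bu_{i+1})|^{-s}\big)\leq C\,\psi^s(\bu_1,\ldots,\bu_n,\bv)^{-1}$ (Proposition~\ref{calE}). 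The remaining work is combinatorial: decompose the $n$-fold integral according to the equivalence class of the join set, and show by induction on the tree (Proposition~\ref{pro_intest}) that each class contributes at most $\prod_{l\in L}\big(\sum_{|\bw|=l}\psi^s(T_\bw)^{1-q}\mu(\mathcal{C}_\bw)^q\big)^{1/n}$ over its set of levels $L$. A crude count of join classes with prescribed levels (bounded by $(n-1)!$) and a geometric sum over level-tuples then reduces everything to the convergent series $\Phi_s$. Your Borel--Cantelli/monotonicity endgame is exactly how the paper finishes.
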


Combining this with Theorem \ref{Dq_ub}, we obtain  a sufficient condition for  the existence of generalized $q$-dimension.
\begin{pro}\label{Dq_eq}
Let $E^\w$ be the nonautonomous affine set with random translations. Let $\mu$ be  a positive finite Borel measure on $\Sigma^\infty$, and let $\mu^\w$ be the image measure of $\mu$ given by~\eqref{def_ua}. Suppose that $d_q^-=d_q^+=d_q$. Then for $q>1$,  the generalized $q$-dimension $D_q(\mu^\w)$ exists and
$$
D_q(\mu^\w)=\min\{d_q,d\},
$$
for almost all $\w$.
\end{pro}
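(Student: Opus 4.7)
The plan is to derive this proposition as a direct corollary of Theorem~\ref{Dq_ub} and Theorem~\ref{thm_Dq1}, since the hypothesis $d_q^-=d_q^+=d_q$ precisely closes the gap between the deterministic upper bound and the almost-sure lower bound. No new machinery is needed; the argument is a sandwich.

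Fix $q>1$. By Theorem~\ref{thm_Dq1}, there exists a set $\Omega_0$ with $\mathbf{P}(\Omega_0)=1$ such that for every $\omega\in\Omega_0$,
$$
\underline{D}_q(\mu^\omega) = \min\{d_q^-, d\} = \min\{d_q,d\},
$$
where the second equality uses the hypothesis. On the other hand, Theorem~\ref{Dq_ub} is a deterministic statement: for \emph{every} realization $\omega$,
$$
\overline{D}_q(\mu^\omega) \leq \min\{d_q^+, d\} = \min\{d_q,d\}.
$$
In particular this upper bound holds on the full-measure set $\Omega_0$.

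The elementary inequality $\underline{D}_q(\mu^\omega)\leq \overline{D}_q(\mu^\omega)$, valid for any positive finite Borel measure, then forces
$$
\min\{d_q,d\} \;=\; \underline{D}_q(\mu^\omega) \;\leq\; \overline{D}_q(\mu^\omega) \;\leq\; \min\{d_q,d\}
$$
for all $\omega\in\Omega_0$. Consequently $\underline{D}_q(\mu^\omega) = \overline{D}_q(\mu^\omega) = \min\{d_q,d\}$, which by definition means the generalized $q$-dimension $D_q(\mu^\omega)$ exists and equals $\min\{d_q,d\}$ for $\mathbf{P}$-almost every $\omega$.

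There is no real obstacle internal to this proof; all the depth is concentrated in Theorems~\ref{Dq_ub} and~\ref{thm_Dq1}. The only point worth flagging is the asymmetry between the two ingredients---Theorem~\ref{Dq_ub} is pointwise in $\omega$ while Theorem~\ref{thm_Dq1} is merely almost sure---which is why the conclusion is phrased ``for almost all $\omega$'' rather than ``for all $\omega$''. Intersecting the two events poses no difficulty because the upper bound already holds everywhere.
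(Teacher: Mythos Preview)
Your proof is correct and matches the paper's approach: the paper presents Proposition~\ref{Dq_eq} as an immediate consequence of Theorem~\ref{Dq_ub} and Theorem~\ref{thm_Dq1} without writing out a separate proof, and your sandwich argument is exactly the intended justification.
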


If all  $\Xi_{k}$ are identical, that is,
$$
\Xi_{k}=\Xi=\{T_{1},T_{2},\ldots,T_{n_0}\},
$$
for all integers $k>0 $,
we write $d_q$ for  the unique solution to
\begin{equation}\label{def_sadg}
\lim_{k\to\infty} \Big(\sum_{\bu\in\Sigma^k} \psi^{d_q}(T_\bu)^{1-q}  \  p_\bu^q \Big)^{\frac{1}{k}} =1,
\end{equation}
for $q>1$, and
for $q=1$, we write $d_q=d_1$ for the unique solution to
\begin{equation}\label{def_sad1}
\lim_{k\to \infty} \frac{1}{k}\int \log\Big(\psi^{d_1}(T_{\bu| k})^{-1} p_{\bu| k} \Big) d\mu(\bu) = 0.
\end{equation}

Given a  nonautonomous affine set $E^\w$ with random translations. If $
\Xi_{k}=\Xi$ for all $k>0 $,
the attractor $E^\omega$ is called  an {\em almost self-affine set}. Almost self-affine sets were first studied by Jordan,  Pollicott and Simon in \cite{JPS07} for dimension theory. Later, Falconer~\cite{Falco10} studied generalized $q$-dimension of measures supported on such sets, and he provided  generalized $q$-dimension  formula for $q>1$. In the following conclusion we improve it to $q\geq 1$.

\begin{cor}\label{Dq_aa}
Let $E^\w$ be the nonautonomous affine set with random translation, where $\Xi_k=\Xi=\{T_{1},T_{2},\ldots,T_{n_0}\}$.  Let $\mu^\w$ be the nonautonomous affine measure with  $\bp_k=\bp=(p_{1},p_{2},\cdots, p_{n_0})$ for all $k>0$. Then for all $q>0$, $d_q^-=d_q^+=d_q$. %
For $q \geq 1$, the generalized $q$-dimension $D_q(\mu^\w)$ exists and
$$
D_q(\mu^\w)=\min\{d_q,d\},
$$
for almost all $\w$, where $d_q$ is given by \eqref{def_sadg} and \eqref{def_sad1}.
\end{cor}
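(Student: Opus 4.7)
The plan has two parts. First I would establish the equality $d_q^- = d_q^+ = d_q$ for all $q > 0$ by a purely analytic argument exploiting the constancy of $\Xi_k$ and $\bp_k$; once this is in hand, for $q > 1$ the almost-sure conclusion follows immediately from Proposition \ref{Dq_eq}, while the information-dimension case $q = 1$ is extracted from the $q > 1$ result by a limiting argument.

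For the equality of critical values when $q \neq 1$, fix $s > 0$ and set $a_k(s) := \sum_{\bu\in\Sigma^k} \psi^s(T_\bu)^{1-q} p_\bu^q$. Constancy of $\bp$ gives $p_{\bu\bv} = p_\bu p_\bv$ exactly, while submultiplicativity \eqref{subphi} gives $\psi^s(T_{\bu\bv}) \leq \psi^s(T_\bu)\psi^s(T_\bv)$; for $q > 1$ the sign of $1-q$ reverses the latter, producing the supermultiplicative relation $a_{k+m}(s) \geq a_k(s) a_m(s)$, and for $0 < q < 1$ it remains submultiplicative. Fekete's lemma thus yields $\Lambda(s) := \lim_k a_k(s)^{1/k}$, which is continuous and strictly monotone in $s$, so the equation $\Lambda(s) = 1$ has a unique root $d_q$ agreeing with \eqref{def_sadg}. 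The $q = 1$ case is analogous with the entropy functional $H_k(s) := \sum_{\bv\in\Sigma^k} p_\bv \log(\psi^s(T_\bv)^{-1} p_\bv)$ in place of $a_k(s)$, again using submultiplicativity of $\psi^s$ and multiplicativity of $p_\bu$ to obtain a Fekete-type limit. To identify these values with the cut-set definitions \eqref{def_adq-} and \eqref{def_adq+}, observe that since the matrices come from the fixed finite set $\Xi$, every $\bu \in \Sigma^*(s, r)$ has $|\bu|$ comparable to $|\log r|$ with constants depending only on $\Xi$; consequently each cut-set sum is sandwiched between level-$k$ sums over a bounded window of levels, and its limsup/liminf is governed by the exponential rate $\Lambda(s)$ (or the linear rate $\lim_k H_k(s)/k$ when $q = 1$). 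This gives $d_q^- = d_q^+ = d_q$ throughout $q > 0$.

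For $q > 1$, this verifies the hypothesis of Proposition \ref{Dq_eq}, and one obtains $D_q(\mu^\w) = \min\{d_q, d\}$ for $\mathbf{P}$-almost every $\w$. For $q = 1$, the upper bound $\overline{D}_1(\mu^\w) \leq \min\{d_1, d\}$ is supplied by Theorem \ref{Dq_ub}. The matching lower bound is the main obstacle, since the second-moment estimates underpinning Theorem \ref{thm_Dq1} become singular at $q = 1$. My plan is to bypass the singularity as follows: picking a sequence $q_n \downarrow 1$ of rationals, on a common probability-one event Proposition \ref{Dq_eq} gives $\underline{D}_{q_n}(\mu^\w) \geq \min\{d_{q_n}, d\}$ for every $n$, and the standard monotonicity $\underline{D}_q \leq \underline{D}_1$ for $q > 1$ then forces $\underline{D}_1(\mu^\w) \geq \min\{\lim_n d_{q_n}, d\}$. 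The remaining and most delicate step is verifying the continuity $\lim_{q \downarrow 1} d_q = d_1$. I would establish this from the pointwise identity $\lim_{q \to 1}\log a_k(s)/(q-1) = H_k(s)$ (by L'Hopital's rule applied at $q = 1$) combined with an equicontinuity estimate in $q$ for $\log a_k(s)/k$ uniform in $k$, which permits the interchange of $\lim_q$ and $\lim_k$ in the defining equations for the critical exponents.
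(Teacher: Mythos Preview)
Your treatment of $q>1$ is essentially what the paper does (the paper defers to \cite[Lemma 8.1]{Falco10} for the Fekete--type argument that $d_q^-=d_q^+=d_q$), so that part is fine. However, your sketch of the cut--set identification is inaccurate: for $\bu\in\Sigma^*(s,r)$ the length $|\bu|$ only satisfies $\log r/\log\alpha_-\le|\bu|\le 1+\log r/\log\alpha_+$, so the window of levels has width comparable to $|\log r|$, not $O(1)$. The actual identification (see Proposition~\ref{pro_pdq_eq} and the paper's lengthy argument for $d_1^+\le d_1$) uses a covering of $\Sigma^*$ by cut--sets $\Sigma^*(s,\rho^l)$ rather than a sandwiching, and for $q=1$ it is genuinely delicate.

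The more serious issue is your $q=1$ strategy. Your plan hinges on $\lim_{q\downarrow1}d_q=d_1$, and you propose to deduce this by interchanging $\lim_{q\to1}$ and $\lim_{k\to\infty}$ in $\ell_k(s,q)/(q-1)$ where $\ell_k(s,q)=\tfrac1k\log a_k(s,q)$. Equicontinuity of $\ell_k$ in $q$ (which does follow from a uniform first--derivative bound) only gives $\log\Lambda(s,q)\to0$; to interchange at the \emph{rescaled} level one needs equicontinuity of $\ell_k/(q-1)$, i.e.\ a bound on $\partial_q^2\ell_k=\tfrac1k\mathrm{Var}_{\nu_{k,q}}\bigl[\log(p_\bu/\psi^s(T_\bu))\bigr]$ that is uniform in $k$. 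Because $\log\psi^s(T_\bu)$ is merely subadditive rather than additive, this variance is a~priori $O(k^2)$, giving $\partial_q^2\ell_k=O(k)$, and the interchange fails. (Jensen's inequality easily yields one direction, $d_q\le d_1$, but the reverse needs more.) The paper avoids this obstacle entirely: it proves $\underline{D}_1(\mu^\w)\ge d_1$ directly by a first--moment energy estimate---for $s<d_1$ the subadditive ergodic theorem gives $\sum_k\psi^s(T_{\bu|k})^{-1}p_{\bu|k}<\infty$ for $\mu$--a.e.\ $\bu$, whence Lemma~\ref{lemE}, Markov, Borel--Cantelli and Fatou yield the pointwise lower bound---and then closes the loop by a separate combinatorial proof that $d_1^+\le d_1$.
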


Finally, we consider a special case studied in \cite{GM22},  where the translations of affine mappings in the nonautonomous structure are selected only from a finite set.
Let $\Gamma=\{a_1,\ldots, a_\tau\}$ be a finite collection of translations, where $a_1,\ldots, a_\tau$ are regarded later as variables in $\R^d$. For each $\mathbf{u}=u_1\ldots u_k \in \Sigma^*$,  we have that $J_{\mathbf{u}}=\Psi_{\mathbf{u}}(J)=\Psi_{u_1}\circ  \ldots \circ \Psi_{u_k} (J)$. Suppose that the translation of $\Psi_{u_j}$ is an element of $\Gamma$, that is,
\begin{equation}\label{mpsac}
\Psi_{u_j}(x)=T_{j,u_j}x+ \w_{u_1\ldots u_j}, \qquad \w_{u_1\ldots u_j}\in \Gamma,
\end{equation}
 for $j=1,2,\ldots, k$.
  We write $\ba=(a_1,\ldots, a_\tau)$ as a variable in $\R^{\tau d}$. To emphasize the dependence on these special translations in $\Gamma$,  we denote the nonautonomous affine set by $E^\ba$.

\begin{thm}\label{dimmua}
Given $\Gamma=\{a_1,\ldots, a_\tau\}$. Let $E^\ba$ be the nonautonomous affine set given by ~\eqref{attractor} and \eqref{mpsac}. Let $\mu^\ba$ be the nonautonomous affine measure.
 Suppose that
$$
\sup\{\|T_{k,j}\| : 0<j\leq n_k,  k>0 \}< \frac{1}{2}.
$$
Then for $1<q\leq2$, we have that
$$
\underline{D}_q(\mu^\ba)=\min\{d_q^-,d\},
$$
for  $\mathcal{L}^{\tau d}$-almost all $\ba\in \R^{\tau d}$, where
$$
d_q^-=\sup \Big\{s: \sum_{k=1}^\infty \sum_{\bu\in\Sigma^k} \psi^s(T_\bu)^{1-q}\mu(\mathcal{C}_\bu)^q<\infty\Big\}.
$$
\end{thm}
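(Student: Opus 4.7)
The plan is to combine Theorem \ref{Dq_ub}, which furnishes the deterministic upper bound $\underline{D}_q(\mu^\ba) \leq \min\{d_q^-, d\}$, with a potential-theoretic argument in the spirit of Falconer~\cite{Falco05,Falco10} to establish the matching $\mathcal{L}^{\tau d}$-almost sure lower bound $\underline{D}_q(\mu^\ba) \geq \min\{d_q^-, d\}$. The restriction $1 < q \leq 2$ enters only through the concavity of $t \mapsto t^{q-1}$ on $[0,\infty)$, which is used to bring the $\ba$-expectation inside a nonlinear functional of $\mu^\ba$ via Jensen's inequality.

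First I would fix any $s$ strictly less than $\min\{d_q^-, d\}$; by the definition of $d_q^-$, the series $\mathcal{I}(s,q) := \sum_{k \geq 1}\sum_{\bu \in \Sigma^k}\psi^s(T_\bu)^{1-q}\mu(\mathcal{C}_\bu)^q$ converges. It suffices to prove $\underline{D}_q(\mu^\ba) \geq s$ for $\mathcal{L}^{\tau d}$-a.a.\ $\ba$ in a large fixed ball of $\R^{\tau d}$, since the full statement then follows by exhausting $s \uparrow \min\{d_q^-, d\}$ along a rational sequence and covering $\R^{\tau d}$ by countably many balls. The standard comparison $\sum_{Q \in \mathcal{M}_r}\mu^\ba(Q)^q \leq C_d \int \mu^\ba(B(x,r))^{q-1}\, d\mu^\ba(x)$ reduces the task to controlling the integral on the right. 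Rewriting it through the push-forward $\mu^\ba = \Pi^\ba_* \mu$ and applying Jensen's inequality to $t \mapsto t^{q-1}$ under $\mathcal{L}^{\tau d}$-expectation $\mathbf{E}$, together with Tonelli, yields
\begin{equation*}
\mathbf{E}\!\int\! \mu^\ba(B(x,r))^{q-1}\, d\mu^\ba(x) \leq \int\!\Big(\int \mathbf{P}\{|\Pi^\ba(\bu) - \Pi^\ba(\bv)| < r\}\, d\mu(\bv)\Big)^{q-1}\, d\mu(\bu).
\end{equation*}

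The main obstacle will be the geometric probability estimate
\begin{equation*}
\mathbf{P}\{|\Pi^\ba(\bu) - \Pi^\ba(\bv)| < r\} \leq C\, r^s\, \psi^s(T_{\bu \wedge \bv})^{-1}, \qquad 0 < s < d,
\end{equation*}
under the \emph{finite} parameterization $\ba \in \R^{\tau d}$. From \eqref{points2}, $\Pi^\ba(\bu) - \Pi^\ba(\bv)$ is a linear function of $\ba = (a_1,\ldots,a_\tau)$, and its coefficient operator begins with the composition $T_{\bu \wedge \bv}$ acting on tail contributions indexed by letters beyond level $|\bu \wedge \bv|$. Because translations at different levels share the same $\tau$ parameters, the independence trick of Theorem~\ref{thm_Dq1} is unavailable; instead the hypothesis $\|T_{k,j}\| < \tfrac{1}{2}$ is essential, forcing the geometric tails past level $|\bu \wedge \bv|$ to be dominated by the leading $T_{\bu \wedge \bv}$ contribution. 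Consequently, the derivative of $\ba \mapsto \Pi^\ba(\bu) - \Pi^\ba(\bv)$, restricted to a suitably chosen $d$-dimensional coordinate subspace of $\R^{\tau d}$, has singular values comparable to those of $T_{\bu \wedge \bv}$, and a singular-value-adapted change of variables in $\ba$ produces the factor $\psi^s(T_{\bu \wedge \bv})^{-1}$.

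To assemble the final bound, I would decompose the inner integral over $\bv$ according to the length $|\bu \wedge \bv|$, invoke the subadditivity $(A+B)^{q-1} \leq A^{q-1} + B^{q-1}$ (valid since $q-1 \in (0,1]$) to distribute the $(q-1)$-th power across levels, and interchange summation with $\ba$-expectation to obtain
\begin{equation*}
\mathbf{E}\sum_{Q \in \mathcal{M}_r}\mu^\ba(Q)^q \leq C\, r^{s(q-1)}\sum_{k \geq k(r)}\sum_{\bu \in \Sigma^k}\psi^s(T_\bu)^{1-q}\mu(\mathcal{C}_\bu)^q,
\end{equation*}
where $k(r) \to \infty$ as $r \to 0$. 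Along a geometric sequence $r_n \to 0$, the convergence of $\mathcal{I}(s,q)$ makes the right-hand tails summable, so a Markov plus Borel-Cantelli argument gives $\sum_{Q \in \mathcal{M}_{r_n}}\mu^\ba(Q)^q = O(r_n^{s(q-1)})$ for $\mathcal{L}^{\tau d}$-a.a.\ $\ba$, which unwraps to $\underline{D}_q(\mu^\ba) \geq s$ and completes the proof.
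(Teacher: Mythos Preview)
Your proposal is correct and follows essentially the same route as the paper: Jensen for the concave map $t\mapsto t^{q-1}$, the Falconer-type change-of-variables estimate $\int_{\overline{\mathbf{B}(0,\rho)}}|\Pi^\ba(\bu)-\Pi^\ba(\bv)|^{-s}\,d\ba\leq C/\psi^s(T_{\bu\wedge\bv})$ (the invertibility of the relevant linear map in $\ba$ being exactly where the hypothesis $\sup\|T_{k,j}\|<\tfrac12$ enters), subadditivity of $t^{q-1}$ over the level decomposition, and comparison with the series $\mathcal{I}(s,q)$. One small slip: in your final displayed estimate the level sum should run over all $k\geq 0$, not $k\geq k(r)$ with $k(r)\to\infty$---there is no $r$-dependent truncation, since the transversality bound is applied uniformly to every pair $(\bu,\bv)$---but with $\mathcal{I}(s,q)<\infty$ as a fixed constant on the right, your Markov/Borel--Cantelli finish still yields $\underline{D}_q(\mu^\ba)\geq s'$ for every $s'<s$, which suffices.
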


Similar to almost self-afffine sets, supppose that $E^\ba$  is the nonautonomous affine set generated by identical $\Xi_{k}=\Xi=\{T_{1},T_{2},\ldots,T_{n_0}\}$ for all $k>0$ with the translations  chosen from $\Gamma$. Note that $E^\ba$  is not necessarily a self-affine set, and the self-affine set generated by IFS
 $$
\{\Psi_{j}(x)=T_{j}x+ a_{j}\}_{j=1}^{n_0}, \qquad a_{j}\in \Gamma,
  $$
is a special case of $E^\ba$. For the nonautonomous affine measure $\mu^\ba$  with $\bp_k=\bp=(p_{1},p_{2},\cdots, p_{n_0})$ for all $k>0$ supported on $E^\ba$, the generalized $q$-dimension of $\mu^\ba$ exists for $1\leq  q\leq 2$.

\begin{cor}\label{cor_nasid}
Given $\Gamma=\{a_1,\ldots, a_\tau\}$. Let $E^\ba$ be the nonautonomous affine set given by ~\eqref{attractor} and \eqref{mpsac} with $\Xi_{k}=\Xi=\{T_{1},T_{2},\ldots,T_{n_0}\}$ for all $k>0$.
Let $\mu^\ba$ be the nonautonomous affine measure with  $\bp_k=\bp=(p_{1},p_{2},\cdots, p_{n_0})$ for all $k>0$. %
Suppose that
$$
\sup\{\|T_j\| : 0<j\leq n_0 \}< \frac{1}{2}.
$$
Then for all $q>0$, $d_q^-=d_q^+=d_q$. %
For $1 \leq q \leq 2$, the generalized $q$-dimension $D_q(\mu^\ba)$ exists and
$$
D_q(\mu^\ba)=\min\{d_q,d\},
$$
for almost all $\ba$, where $d_q$ is  given by \eqref{def_sadg} and \eqref{def_sad1}.
\end{cor}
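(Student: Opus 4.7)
The plan is to combine a Fekete-style analysis of the matrix sums with the upper bound from Theorem \ref{Dq_ub} and the lower bound from Theorem \ref{dimmua}, handling $q=1$ by a separate limiting argument.

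\emph{Step 1 (equality $d_q^-=d_q^+=d_q$).} Since $\Xi_k=\Xi$ and $\bp_k=\bp$ do not depend on $k$, for $\bu\in\Sigma^k$ and $\bv\in\Sigma^l$ we have $T_{\bu\bv}=T_\bu T_\bv$ and $p_{\bu\bv}=p_\bu p_\bv$. For $q\neq 1$, set $\Phi_k(s):=\sum_{\bu\in\Sigma^k}\psi^s(T_\bu)^{1-q}p_\bu^q$. Submultiplicativity \eqref{subphi} combined with the sign of $1-q$ yields $\Phi_{k+l}(s)\geq\Phi_k(s)\Phi_l(s)$ when $q>1$ and $\Phi_{k+l}(s)\leq\Phi_k(s)\Phi_l(s)$ when $0<q<1$. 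In either case Fekete's lemma gives a limit $\Lambda(s,q):=\lim_k\Phi_k(s)^{1/k}$, continuous and strictly decreasing in $s$, so $\Lambda(d_q,q)=1$ defines $d_q$ uniquely---matching \eqref{def_sadg}. The $q=1$ case is parallel, with $\int\log\bigl(\psi^s(T_{\bu|k})^{-1}p_{\bu|k}\bigr)d\mu(\bu)$ playing the role of $\log\Phi_k(s)$ and satisfying a subadditive relation feeding into \eqref{def_sad1}. To identify $d_q^\pm$ from \eqref{def_adq-}--\eqref{def_adq+} with $d_q$, I would use $\alpha_-^k\le\alpha_m(T_\bu)\le\alpha_+^k$ from \eqref{alhpha+-} to show each cut-set $\Sigma^*(s,r)$ lies in a bounded window of levels $k\sim\log r/\log\alpha_\pm$. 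The cut-set sum is then comparable, up to constant factors, to $\Phi_k(s)$, and a root-test argument collapses the $\limsup$ and $\liminf$ in \eqref{def_adq-}--\eqref{def_adq+} onto the same critical $d_q$.

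\emph{Step 2 (dimension formula for $1<q\leq 2$).} The bound $\overline{D}_q(\mu^\ba)\leq\min\{d_q,d\}$ is immediate from Theorem \ref{Dq_ub} for every $\ba$, since $d_q^+=d_q$. Under the standing hypothesis $\sup_j\|T_j\|<\frac{1}{2}$, Theorem \ref{dimmua} applies and yields $\underline{D}_q(\mu^\ba)\geq\min\{d_q^-,d\}=\min\{d_q,d\}$ for $\mathcal{L}^{\tau d}$-almost every $\ba$. Hence $D_q(\mu^\ba)=\min\{d_q,d\}$ almost surely for $1<q\leq 2$.

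\emph{Step 3 ($q=1$, the main obstacle).} Since Theorem \ref{dimmua} is restricted to $q>1$, the information-dimension case requires separate treatment. The cleanest path is a $q\downarrow 1$ limit: choose a countable sequence $q_n\downarrow 1$ and intersect the corresponding full-measure sets of $\ba$ to obtain a single full-measure set on which $D_{q_n}(\mu^\ba)=\min\{d_{q_n},d\}$ for every $n$. Joint continuity of $\Lambda(s,q)$ from Step 1 gives $d_{q_n}\to d_1$. Transferring this to $\underline{D}_1$ is the delicate part: one should exploit concavity and monotonicity of the $L^q$-spectrum $\tau(\mu^\ba,q)$ in $q$ to obtain right-continuity-type information at $q=1$, and then, using $D_q=\tau(\mu^\ba,q)/(1-q)$ for $q>1$ together with $\tau(\mu^\ba,1)=0$, extract the bound $\underline{D}_1(\mu^\ba)\geq\min\{d_1,d\}$; the matching upper bound comes from Theorem \ref{Dq_ub} at $q=1$. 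If this semicontinuity argument is not tight, the fallback is to redo the proof of Theorem \ref{dimmua} with the entropy functional in place of the $L^q$ energy, essentially as Falconer does for $q=1$ in \cite{Falco10}. The key risk is verifying that the finite-dimensional parameter $\ba\in\R^{\tau d}$ (rather than the infinite product translations of Corollary \ref{Dq_aa}) still provides enough integration to control the entropy estimates, for which the contraction condition $\|T_j\|<\frac{1}{2}$ is precisely what is needed, exactly as in Theorem \ref{dimmua}.
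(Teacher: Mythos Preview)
Your Steps 1 and 2 are correct and match the paper: the Fekete argument for $d_q^-=d_q^+=d_q$ is exactly what lies behind the cited \cite[Lemma~8.1]{Falco10}, and for $1<q\le 2$ the combination of Theorem~\ref{Dq_ub} (upper bound) with Theorem~\ref{dimmua} (lower bound) is precisely the intended route.

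The gap is in Step 3. Your primary $q\downarrow 1$ argument rests on ``joint continuity of $\Lambda(s,q)$ gives $d_{q_n}\to d_1$'', but this fails as stated: for $q=1$ one has $\sum_{\bu\in\Sigma^k}\psi^s(T_\bu)^{0}p_\bu=1$ identically, so $\Lambda(s,1)\equiv 1$ and the equation $\Lambda(s,1)=1$ does not single out $d_1$. What one actually needs is continuity of the \emph{normalised} pressure $(s,q)\mapsto\frac{1}{q-1}\log\Lambda(s,q)$ at $q=1$, i.e.\ a R\'enyi-to-Shannon limit inside a subadditive limit; this is plausible but is a genuine additional step you have not supplied. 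If you do establish $d_{q_n}\to d_1$, then the monotonicity of $\underline{D}_q$ in Proposition~\ref{pro_eqdefDq} gives the lower bound at $q=1$ directly, without the $\tau$-spectrum detour you sketch.

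Your fallback is closer to what the paper does, but the attribution is off: \cite{Falco10} treats only $q>1$; the $q=1$ case is new here and is carried out in the proof of Corollary~\ref{Dq_aa}. That argument transfers to the present finite-translation setting verbatim once Lemma~\ref{lemE} is replaced by the deterministic estimate \eqref{ineq_ssiphi}. Concretely: for $s<d_1$ the subadditive ergodic theorem gives $\sum_k\psi^s(T_{\bu|k})^{-1}p_{\bu|k}<\infty$ for $\mu$-a.e.\ $\bu$; then \eqref{ineq_ssiphi} and Fubini bound $\int_{\overline{\mathbf{B}(0,\rho)}}\mu^\ba(B(\Pi^\ba(\bu),r))\,d\ba\le C r^s$; Markov plus Borel--Cantelli give a pointwise local-dimension lower bound, and Fatou yields $\underline{D}_1(\mu^\ba)\ge d_1$ for a.e.\ $\ba$. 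The inequality $d_1^+\le d_1$ is the purely symbolic combinatorial estimate from the proof of Corollary~\ref{Dq_aa} and carries over unchanged, since $d_q^\pm$ depend only on $\{T_j\}$ and $\{p_j\}$, not on the translations.
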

The generalized $q$-dimension of self-affine measures was first studied  by Falconer in \cite{Falco05}, and he proved the generalized $q$-dimension formula for  $1< q\leq 2$. As  an immediate consequence of Corollary \ref{cor_nasid}, we improve it $1\leq  q\leq 2$.
\begin{cor}
Let $E^\ba$ be the self-affine set given by  $\Xi=\{T_{1},T_{2},\ldots,T_{n_0}\}$.
Let $\mu^\ba$ be the self-affine measure with  $\bp=(p_{1},p_{2},\cdots, p_{n_0})$. %
Suppose that
$$
\sup\{\|T_j\| : 0<j\leq n_0 \}< \frac{1}{2}.
$$
Then for all $q>0$, $d_q^-=d_q^+=d_q$. %
For $1 \leq q \leq 2$, the generalized $q$-dimension $D_q(\mu^\ba)$ exists and
$$
D_q(\mu^\ba)=\min\{d_q,d\},
$$
for almost all $\ba$, where $d_q$ is given by \eqref{def_sadg} and \eqref{def_sad1}.
\end{cor}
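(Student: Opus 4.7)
The plan is to derive this corollary directly from Corollary~\ref{cor_nasid} by recognizing the self-affine setting as a particular specialization of the nonautonomous-with-finite-$\Gamma$ framework. Specifically, I set $\tau=n_0$ and identify $\Gamma=\{a_1,\ldots,a_{n_0}\}$ with the translation vectors of the IFS, and I prescribe the nonautonomous translations in~\eqref{mpsac} by the rule $\w_{u_1\cdots u_j}=a_{u_j}$, that is, depending only on the last symbol of the prefix. Under this rule each level-$j$ mapping becomes $\Psi_{u_j}(x)=T_{u_j}x+a_{u_j}$, so the projection formula~\eqref{points2} collapses to the familiar self-affine coding $\Pi^\ba(\bu)=\sum_{j\geq 1}T_{u_1}\cdots T_{u_{j-1}}\,a_{u_j}$; hence $E^\ba$ coincides with the classical self-affine attractor of $\{\Psi_j\}_{j=1}^{n_0}$, and $\mu^\ba$ with the usual self-affine measure associated with $\bp$.

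With this identification in place, every hypothesis of Corollary~\ref{cor_nasid} is immediate: $\Xi_k=\Xi$ and $\bp_k=\bp$ for all $k>0$, and $\sup_j\|T_j\|<1/2$ is assumed. Its conclusion therefore transfers verbatim: the equality $d_q^-=d_q^+=d_q$ holds for all $q>0$, and for $1\leq q\leq 2$ the generalized $q$-dimension $D_q(\mu^\ba)$ exists and equals $\min\{d_q,d\}$ for $\mathcal{L}^{n_0 d}$-almost every $\ba\in\R^{n_0 d}$, with $d_q$ given by \eqref{def_sadg} and \eqref{def_sad1}.

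I do not anticipate a genuine obstacle, since the argument is pure specialization rather than new analysis. The only point that warrants a brief remark is matching the almost-sure quantifier: in Corollary~\ref{cor_nasid} the randomness is on the positions of the atoms $(a_1,\ldots,a_\tau)\in\R^{\tau d}$, while the assignment rule $\bu\mapsto\w_\bu\in\Gamma$ is deterministic. Since the self-affine rule $\w_{u_1\cdots u_j}=a_{u_j}$ is exactly such a deterministic assignment, the almost-all-$\ba$ statement of Corollary~\ref{cor_nasid} specializes, with $\tau=n_0$, to the almost-all-$\ba$ statement of the present corollary without modification.
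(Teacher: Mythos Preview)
Your proposal is correct and matches the paper's own approach: the paper states this corollary as ``an immediate consequence of Corollary~\ref{cor_nasid}'' without further proof, having already observed just before Corollary~\ref{cor_nasid} that the self-affine IFS $\{\Psi_j(x)=T_jx+a_j\}_{j=1}^{n_0}$ is a special case of the nonautonomous affine set $E^\ba$ with translations from~$\Gamma$. Your explicit identification $\tau=n_0$, $\w_{u_1\cdots u_j}=a_{u_j}$, and your remark on the almost-sure quantifier make this specialization precise.
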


\section{Generalized $q$-dimensions of measures on Non-autonomous fractals}\label{sec 3}
\setcounter{equation}{0}
\setcounter{pro}{0}

In this section, we study the generalized $q$-dimensions of measures supported on nonautonomous fractals, and we always assume that
$$
c_*=\inf_{k,j} \{c_{k,j}\} > 0, \qquad\qquad c^*=\sup_{k,j} \{c_{k,j}\} < 1.
$$
Without loss of generality, we always assume that $|J|=1$.

First we given the upper bounds for the upper and lower  generalized $q$-dimensions.

\begin{proof}[Proof of Theorem~\ref{thm_DqU}]
We only give the proof for $\underline{D}_q(\mu^\w)\leq \min\{d_q^-,d\}$ since the proof for $\overline{D}_q(\mu^\w)\leq\min\{d_q^-,d\}$ is almost identical.

Recall that for each $k>0$, the ratios $\{c_{k,i}\}$ are given by
$$
|S_{k,i}(x)-S_{k,i}(y)| \leq c_{k,i}|x-y|, \qquad \textit{ for }x,y \in J.
$$
It is sufficient to prove that
$$
\underline{D}_q(\mu^\w)\leq d_q^-,
$$
where $d_q^-$ is given by \eqref{def_dq-} with respect to $(c_{k,1},c_{k,2},\cdots, c_{k,n_{k}})$.

Let $\mu_\bu$ denote the restriction of $\mu$ to the cylinder $\mathcal{C}_\bu$, and let $\mu_\bu^\w$ be the image measure of $\mu_\bu$ under $\Pi^\w$. It is clear that the support of $\mu_\bu^\w$ is contained $J_\bu$, i.e.,  $\spt\ \mu _\bu^\w \subset J_\bu$, and 
$$
\mu(\mathcal{C}_\bu) =\mu_\bu(\mathcal{C}_\bu)= \mu_\bu^\w(J_\bu).
$$
Given reals $s>0$ and $c_*>r>0$, recall that $\Sigma^*(s,r)=\{\bu\in\Sigma^*: c_\bu \leq r < c_{\bu^-}\}$ is a cut set. For each $\bu \in \Sigma^*(s,r)$, we have
\begin{equation}\label{leqr}
  c_* r \leq |J_\bu| \leq r.
\end{equation}
Then $J_\bu$ intersects at most $2^d$ $r$-cubes in $\R^d$.

According to the definition of $d_q^-$, the proof is divided into $3$ parts: $q>1$, $1>q>0$ and $q=1$.

For $q>1$. For each $\bu\in \Sigma^*$, since $J_\bu$ intersects at most $2^d$ $r$-cubes in $\R^d$, by Jensen's inequality, it follows that
\begin{equation}\label{muwql}
\mu(\mathcal{C}_\bu)^q= \mu_\bu^\w(J_\bu)^q \leq C_1 \sum_{Q\in\mathcal{M}_r} \mu_\bu^\w(Q)^q,
\end{equation}
where $C_1$ is a constant that only depends on $d$. For each $Q\in \mathcal{M}_r$, by the power inequality, we have that
$$
\sum_{\bu\in\Sigma^*(s,r)} \mu_\bu^\w(Q)^q \leq \Big(\sum_{\bu\in\Sigma^*(s,r)} \mu_\bu^\w(Q)\Big)^q = \mu^\w(Q)^q.
$$
Summing up ~\eqref{muwql}, it follows  that
\begin{equation}\label{leqmuw}
\sum_{\bu\in\Sigma^*(s,r)} \mu(\mathcal{C}_\bu)^q \leq C_1  \sum_{Q\in\mathcal{M}_r} \mu^\w(Q)^q.
\end{equation}

For every $s>d_q^-$,  by \eqref{def_dq-}, we have that
$$
\limsup_{r\to 0}\sum_{\bu\in\Sigma^*(s, r)}c_\bu^{s(1-q)} \mu(\mathcal{C}_\bu)^q=\infty.
$$
Combining with ~\eqref{leqr} and ~\eqref{leqmuw}, this implies that
$$
s \geq \liminf_{r\to 0}\frac{\log\sum_{\bu\in\Sigma^*(s, r)}\mu(\mathcal{C}_\bu)^q}{(q-1)\log r}\geq \liminf_{r\to 0}\frac{\log \sum_{Q\in\mathcal{M}_r} \mu^\w(Q)^q}{(q-1)\log r}=\underline{D}_q(\mu^\w),
$$
and it immediately follows that
$$
\underline{D}_q(\mu^\w)\leq d_q^-.
$$

For $1>q>0$. By Jensen's inequality, we have that for each $J_\bu$,
\begin{equation}\label{muwqg}
\mu_\bu^\w(J_\bu)^q \geq C_2 \sum_{Q\in\mathcal{M}_r} \mu_\bu^\w(Q)^q.
\end{equation}
where $C_2$ is a constant that only depends on $d$. For each $Q\in \mathcal{M}_r$, by the power inequality, we have
$$
\sum_{\bu\in\Sigma^*(s,r)} \mu_\bu^\w(Q)^q \geq \Big(\sum_{\bu\in\Sigma^*(s,r)} \mu_\bu^\w(Q)\Big)^q = \mu^\w(Q)^q.
$$
Summing up ~\eqref{muwqg}, we obtain that
$$
\sum_{\bu\in\Sigma^*(s,r)} \mu_\bu^\w(J_\bu)^q \geq C_2  \sum_{Q\in\mathcal{M}_r} \mu^\w(Q)^q.
$$
Since $\mu_\bu^\w(J_\bu)=\mu(\mathcal{C}_\bu)$, it follows that
\begin{equation}\label{geqmuw}
\sum_{\bu\in\Sigma^*(s,r)}\mu(\mathcal{C}_\bu)^q \geq C_2  \sum_{Q\in\mathcal{M}_r} \mu^\w(Q)^q.
\end{equation}

For every $s>d_q^-$, by \eqref{def_dq-}, we have that
$$
\liminf_{r\to 0}\sum_{\bu\in\Sigma^*(s, r)}c_\bu^{s(1-q)}\mu(\mathcal{C}_\bu)^q<\infty.
$$
Combining with ~\eqref{leqr} and ~\eqref{geqmuw}, this implies that
$$
s \geq \liminf_{r\to 0}\frac{\log\sum_{\bu\in\Sigma^*(s, r)}\mu(\mathcal{C}_\bu)^q}{(q-1)\log r}\geq \liminf_{r\to 0}\frac{\log \sum_{Q\in\mathcal{M}_r} \mu^\w(Q)^q}{(q-1)\log r}=\underline{D}_q(\mu^\w),
$$
and it immediately follows that
$$
\underline{D}_q(\mu^\w)\leq d_q^-.
$$

For $q=1$. Since $\mu(\mathcal{C}_\bu) = \mu_\bu^\w(J_\bu) $ and  $J_\bu$ intersects at most $2^d$ $r$-cubes in $\R^d$, by Jensen's inequality, we have that for each $J_\bu$,
\begin{equation}\label{muwlg}
\mu(\mathcal{C}_\bu) \log \mu(\mathcal{C}_\bu)\leq \sum_{Q\in\mathcal{M}_r} \mu_\bu^\w(Q) \log \mu_\bu^\w(Q) + \sum_{Q\in\mathcal{M}_r} \mu_\bu^\w(Q) \log 2^d.
\end{equation}
where $C_3$ is a constant that only depends on $d$. For each $Q\in \mathcal{M}_r$ we have
$$
\sum_{\bu\in\Sigma^*(s,r)} \mu_\bu^\w(Q) \log \mu_\bu^\w(Q) \leq \sum_{\bu\in\Sigma^*(s,r)} \mu_\bu^\w(Q) \log \Big(\sum_{\bu\in\Sigma^*(s,r)} \mu_\bu^\w(Q)\Big) = \mu^\w(Q) \log \mu^\w(Q).
$$
Summing up ~\eqref{muwlg}, we obtain that
\begin{equation}\label{logmuw}
\sum_{\bu\in\Sigma^*(s,r)}\mu(\mathcal{C}_\bu) \log \mu(\mathcal{C}_\bu)\leq \sum_{Q\in\mathcal{M}_r} \mu^\w(Q) \log \mu^\w(Q) + \log 2^d.
\end{equation}

For every $s>d_1^-$,  by \eqref{def_dq-}, we have that
$$
\limsup_{r\to 0}\sum_{\bu\in\Sigma^*(s, r)}\mu(\mathcal{C}_\bu)\log\Big(c_\bu^{-s}\mu(\mathcal{C}_\bu)\Big)=\infty.
$$
Combining with ~\eqref{leqr} and ~\eqref{logmuw}, this implies that
\begin{eqnarray*}
s &\geq& \liminf_{r\to 0}\frac{\sum_{\bu\in\Sigma^*(s, r)} \mu(\mathcal{C}_\bu)\log \mu(\mathcal{C}_\bu) }{\log r}              \\
&\geq& \liminf_{r\to 0}\frac{ \sum_{Q\in\mathcal{M}_r} \mu^\w(Q) \log \mu^\w(Q)}{\log r}           \\ &=&\underline{D}_1(\mu^\w),
\end{eqnarray*}
and it immediately follows that
$$
\underline{D}_1(\mu^\w)\leq d_1^-.
$$

Since $\underline{D}_q(\mu^\w)\leq d_q^-$  holds for all $q>0$,  we obtain that
$$
\underline{D}_q(\mu^\w)\leq\min\{d_q^-,d\},
$$
and we complete the proof.
\end{proof}

Next, we give the lower bounds for the upper and lower  generalized $q$-dimensions of measures supported on nonautonomous attractors.

\begin{proof}[Proof of Theorem~\ref{thm_DqL}]
We only give the proof for $\underline{D}_q(\mu^\w)\geq d_q^-$ since the other proof  is similar.

Given $s>0$ and $c_*>r>0$. Since $E$ satisfies gap separation condition, for all $\bu\neq\bv\in \Sigma^*(s,r)$, we have that
$$\inf\{|x-y|:x\in J_\bu,y\in J_\bv\} \geq C|J_{\bu^-}| \geq
C c_{\bu^-} > C r,$$
and it implies that every $Q\in M_r$ intersects at most  $N$ elements in $\{J_\bu: \bu \in \Sigma^*(s,r)\}$, where $N$ is a constant that only depends on $C$.

For $q>1$. By Jensen's inequality, we have that for each $Q\in \mathcal{M}(r)$,
\begin{equation}\label{muwqq}
\mu^\w(Q)^q \leq C_1 \sum_{\bu \in \Sigma^*(s,r)} \mu^\w(Q \cap J_\bu)^q,
\end{equation}
where $C_1$ is a constant that only depends on $N$. For each $J_\bu$, by the power inequality, we have
$$
\mu^\w(J_\bu)^q \geq \sum_{Q\in \mathcal{M}(r)} \mu^\w(Q\cap J_\bu).
$$
Summing up ~\eqref{muwqq}, we obtain that
\begin{equation}\label{leqmuwq}
\sum_{Q\in\mathcal{M}_r} \mu^\w(Q)^q \leq C_1  \sum_{\bu\in\Sigma^*(s,r)} \mu^\w(J_\bu)^q.
\end{equation}

For each $s<d_q^-$, by \eqref{def_dq-}, we have that
$$
\limsup_{r\to 0}\sum_{\bu\in\Sigma^*(s, r)}c_\bu^{s(1-q)}\mu(\mathcal{C}_\bu)^q<\infty.
$$
Since $\mu^\w(J_\bu)= \mu(\mathcal{C}_\bu)$ for all $\bu\in\Sigma^*(s,r)$ and  $  c_* r \leq |J_\bu| \leq r$ for all $\bu \in \Sigma^*(s,r)$, combining with  ~\eqref{leqmuwq}, it implies that
$$
s \leq \liminf_{r\to 0}\frac{\log\sum_{\bu\in\Sigma^*(s, r)}\mu^\w(J_\bu)^q}{(q-1)\log r}\leq \liminf_{r\to 0}\frac{\log \sum_{Q\in\mathcal{M}_r} \mu^\w(Q)^q}{(q-1)\log r}=\underline{D}_q(\mu^\w).
$$

Hence for $q>1$, we have that
$$\underline{D}_q (\mu^\w)\geq d_q^-.$$
The proofs for $0<q<1$ and $q=1$ are similar, and we omit it.
\end{proof}

Recall that $E$ is a nonautonomous similar set, that is, with
$$
|S_{k,i}(x)-S_{k,i}(y)| =  c_{k,i}|x-y|, \qquad  \textit{ for }x,y \in J,
$$
where $c_{k,i}<1$,  for all $i=1,2,\ldots, n_k$ and for all $ k>0$.
Next, we study the  generalized $q$-dimensions of measures supported on  nonautonomous similar set, and show that upper and lower generalized $q$-dimensions are given by $d_q^+$ and $d_q^-$ respectively under strong separation condition.

\begin{proof}[Proof of Theorem~\ref{thm_MS}]
We only give the proof for  $\underline{D}_q(\mu^\w)=\min\{ d_q^-,d\}$ since the other proof  is similar.  By Theorem \ref{thm_DqU}, it immediately follows that
$$
\underline{D}_q(\mu^\w)\leq \min\{ d_q^-,d\},
$$
and it remains to show that $\underline{D}_q(\mu^\w) \geq d_q^-$.

Given $s>0$ and $c_*>r>0$. For each $\bu \in \Sigma^*(s,r)$, the fact $ c_* r < |J_\bu| \leq r$ implies that the basic set $J_\bu$ intersects at most $2^d$ $r$-cubes in $\R^d$.
On the other hand,
each $r$-cube intersects at most $N$ basic sets in $\{J_\bu: \bu \in \Sigma^*(s,r)\}$, where the constant $N$ only depends on $J$ and $c_*$.

For $q>1$. By Jensen's inequality, we have that for each $J_\bu$,
\begin{equation}\label{vgq}
\mu^\w(J_\bu)^q \leq C \sum_{Q\in\mathcal{M}_r} \mu^\w(Q \cap J_\bu)^q,
\end{equation}
and for each $Q\in \mathcal{M}_r$,
\begin{equation}\label{vqq}
\mu^\w(Q)^q \leq C' \sum_{\bu\in\Sigma^*(s,r)} \mu^\w(J_\bu \cap Q)^q,
\end{equation}
where $C$ and $C'$ are constants that only depends on $J$ and $c_*$. Summing up ~\eqref{vgq} and ~\eqref{vqq} respectively, we obtain that
$$
\sum_{\bu\in\Sigma^*(s,r)} \mu^\w(J_\bu)^q \leq C N \sum_{Q\in\mathcal{M}_r} \mu^\w(Q)^q,
$$
and
$$
\sum_{Q\in\mathcal{M}_r} \mu^\w(Q)^q \leq C' 2^d \sum_{\bu\in\Sigma^*(s,r)} \mu^\w(J_\bu)^q.
$$
By taking $C_1=\max\{C N, C' 2^d\}$, it follows that
\begin{equation}\label{leqmu}
C_1^{-1} \sum_{Q\in \mathcal{M}_r} \mu^\w(Q)^q \leq \sum_{\bu \in \Sigma^*(s,r)} (\mu^\w(J_\bu))^q \leq C_1 \sum_{Q\in \mathcal{M}_r} \mu^\w(Q)^q.
\end{equation}

For all $s<d_q^-$,   by \eqref{def_dq-}, we have that
$$
\limsup_{r\to 0}\sum_{\bu\in\Sigma^*(s, r)}c_\bu^{s(1-q)} \mu(\mathcal{C}_\bu)^q<\infty.
$$
Since $E$ satisfies SSC, we have that  $\mu^\w(J_\bu)=\mu(\mathcal{C}_\bu)$ for all $\bu\in\Sigma^*(s, r),$ and this implies that
$$
\limsup_{r\to 0}\sum_{\bu\in\Sigma^*(s, r)}c_\bu^{s(1-q)} \mu^\w(J_\bu)^q<\infty.
$$
Combining with ~\eqref{leqmu}, this implies that
$$
s \leq \liminf_{r\to 0}\frac{\log\sum_{\bu\in\Sigma^*(s, r)}\mu^\w(J_\bu)^q}{(q-1)\log r}\leq \liminf_{r\to 0}\frac{\log \sum_{Q\in\mathcal{M}_r} \mu^\w(Q)^q}{(q-1)\log r}=\underline{D}_q(\mu^\w),
$$
and it immediately follows that
$$
\underline{D}_q(\mu^\w)\geq d_q^-.
$$
The proofs for $0<q<1$ and $q=1$ are similar, and we omit it.
\end{proof}

The critical values $d_q^-$ and $d_q^+$ are key to study the generalized $q$-dimensions. If the measure $\mu$ is a Bernoulli measure on $\Sigma^\infty$,  then we may further simplify the form of  $d_q^-$ and $d_q^+$ for  nonautonomous similar sets.

\begin{proof}[Proof of Proposition~\ref{leq_dq}]
Since $\mu$ is a Bernoulli measure given by $\{\bp_k\}$, we have
$$
\mu(\mathcal{C}_\bu)=p_\bu=p_{1,u_1}\ldots p_{k,u_k}.
$$

Given $s>0$, $c_*>r>0$. Let
$$
K_1=\max\{|\bu|: \bu\in \Sigma^*(s,r)\},\qquad K_2=\min\{|\bu|: \bu\in \Sigma^*(s,r)\}.
$$
It is clear that  $K_1 \geq K_2 \geq 1$.

For each $\bu\in \Sigma^*(s,r)$ with length $K_1$, it is clear that $\bu^- j\in \Sigma^*(s,r)$ for all $j\in\{1,\ldots,n_{K_1}\}$. If $\sum_j c_{K_1,j}^{s(1-q)} p_{K_1,j}^q \geq 1$, then
$$
\sum_j c_{\bu^- j}^{s(1-q)} p_{\bu^- j}^q \geq c_{\bu^-}^{s(1-q)} p_{\bu^-}^q \Big(\sum_j c_{K_1,j}^{s(1-q)} p_{K_1,j}^q \Big) \geq c_{\bu^-}^{s(1-q)} p_{\bu^-}^q.
$$
If $\sum_j c_{K_1,j}^{s(1-q)} p_{K_1,j} < 1$, then
\begin{eqnarray*}
&&\hspace{-1cm}\! \! \sum_{\bu\in \Sigma^*(s,r) \atop |\bu|=K_1-1} c_{\bu}^{s(1-q)} p_{\bu}^q + \! \! \sum_{\bu\in \Sigma^*(s,r) \atop |\bu|=K_1} c_{\bu}^{s(1-q)} p_{\bu}^q \geq \! \!\! \!\sum_{\bu\in \Sigma^*(s,r) \atop |\bu|=K_1-1} c_{\bu}^{s(1-q)} p_{\bu}^q\Big(\sum_j c_{K_1,j}^{s(1-q)} p_{K_1,j}^q\Big)+\! \! \! \!\sum_{\bu\in \Sigma^*(s,r) \atop |\bu|=K_1} c_{\bu}^{s(1-q)} p_{\bu}^q \\
&&\hspace{5cm} \geq \sum_{\bu\in A_1} c_{\bu}^{s(1-q)} p_{\bu}^q \Big(\sum_j c_{K_1,j}^{s(1-q)} p_{K_1,j}^q \Big).
\end{eqnarray*}
where $A_1=\{\bu\in \Sigma^*(s,r):|\bu|=K_1-1\}\cup \{\bu^-:\bu\in \Sigma^*(s,r), |\bu|=K_1\}$.


By repeating this process, we have
$$
\sum_{\bu\in \Sigma^*(s,r)} c_\bu^{s(1-q)} p_\bu^q \geq \sum_{\bu\in \Sigma^k} c_\bu^{s(1-q)} p_\bu^q  =\prod_{i=1}^k \sum_{j=1}^{n_i}c_{i,j}^{s(1-q)}p_{i,j}^q.
$$
for some $K_2 \leq k \leq K_1$, and it implies that
$$
\liminf_{r\to 0}\sum_{\bu\in\Sigma^*(s, r)}c_\bu^{s(1-q)}\mu(\mathcal{C}_\bu)^q \geq \liminf_{k\to \infty}\prod_{i=1}^k \sum_{j=1}^{n_i}c_{i,j}^{s(1-q)}p_{i,j}^q.
$$
It follows that for $q>1$,
$$
d_q^+ \leq \sup \Big\{s: \liminf_{k\to \infty} \prod_{i=1}^k \sum_{j=1}^{n_i}c_{i,j}^{s(1-q)}p_{i,j}^q<\infty\Big\}.
$$
and for $0<q<1$,
$$
d_q^- \geq \inf \Big\{s: \liminf_{k\to \infty} \prod_{i=1}^k \sum_{j=1}^{n_i}c_{i,j}^{s(1-q)}p_{i,j}^q<\infty\Big\}.
$$
Hence ~\eqref{dq+leq1} and ~\eqref{dq-geq1} hold.

For $q=1$, for each $\bu\in \Sigma^*(s,r)$ with length $K_1$, it is clear that $\bu^- j\in \Sigma^*(s,r)$ for all $j\in\{1,\ldots,n_{K_1}\}$. If $\sum_j p_{K_1,j} \log \left(c_{K_1,j}^{-s} p_{K_1,j}\right) \geq 0$, then
\begin{eqnarray*}
\sum_j p_{\bu^- j} \log \Big(c_{\bu^- j}^{-s} p_{\bu^- j}\Big) &=& p_{\bu^-} \log \left(c_{\bu^-}^{-s} p_{\bu^-}\right)+ p_{\bu^-} \Big(\sum_j p_{K_1,j} \log\left(c_{K_1,j}^{-s} p_{K_1,j}\right)\Big) \\
&\geq& p_{\bu^-} \log \Big(c_{\bu^-}^{-s} p_{\bu^-}\Big).
\end{eqnarray*}
If $\sum_j p_{K_1,j} \log (c_{K_1,j}^{-s} p_{K_1,j}) < 0$, then
\begin{eqnarray*}
&&\sum_{\bu\in \Sigma^*(s,r) \atop |\bu|=K_1-1} p_{\bu} \log \left(c_{\bu}^{-s} p_{\bu}\right) + \sum_{\bu\in \Sigma^*(s,r) \atop |\bu|=K_1} p_{\bu} \log \left(c_{\bu}^{-s} p_{\bu}\right) \\
&=& \sum_{\bu\in \Sigma^*(s,r) \atop |\bu|=K_1-1} p_{\bu} \log \left(c_{\bu}^{-s} p_{\bu}\right) + \sum_{\bu\in \Sigma^*(s,r) \atop |\bu|=K_1} \Big[ p_{\bu^-} \log \left(c_{\bu^-}^{-s} p_{\bu^-}\right)+ p_{\bu^-} \Big(\sum_j p_{K_1,j} \log\left(c_{K_1,j}^{-s} p_{K_1,j}\right)\Big) \Big] \\
&\geq& \sum_{\bu\in A_1} \Big[p_{\bu} \log \left(c_{\bu}^{-s} p_{\bu}\right) + p_{\bu} \Big(\sum_j p_{K_1,j} \log\left(c_{K_1,j}^{-s} p_{K_1,j}\right) \Big) \Big].
\end{eqnarray*}
where $A_1=\{\bu\in \Sigma^*(s,r):|\bu|=K_1-1\}\cup \{\bu^-:\bu\in \Sigma^*(s,r), |\bu|=K_1\}$.


By repeating this process, we have
$$
\sum_{\bu\in \Sigma^*(s,r)}p_{\bu} \log \left(c_{\bu}^{-s} p_{\bu}\right) \geq \sum_{\bu\in \Sigma^k} p_{\bu} \log \left(c_{\bu}^{-s} p_{\bu}\right).
$$
for some $K_2 \leq k \leq K_1$, and thus
$$
\liminf_{r\to 0}\sum_{\bu\in\Sigma^*(s, r)}p_{\bu} \log \left(c_{\bu}^{-s} p_{\bu}\right) \geq \liminf_{k\to \infty} \sum_{\bu\in \Sigma^k} p_{\bu} \log \left(c_{\bu}^{-s} p_{\bu}\right).
$$
By the same argument, we have
$$
\limsup_{r\to 0}\sum_{\bu\in\Sigma^*(s, r)}p_{\bu} \log \left(c_{\bu}^{-s} p_{\bu}\right) \leq \limsup_{k\to \infty} \sum_{\bu\in \Sigma^k} p_{\bu} \log \left(c_{\bu}^{-s} p_{\bu}\right).
$$
Since
$$
\sum_{\bu\in \Sigma^k} p_{\bu} \log \left(c_{\bu}^{-s} p_{\bu}\right) =\sum_{i=1}^k \sum_{j=1}^{n_i} \big(p_{i,j}\log p_{i,j} - s\ p_{i,j}\log c_{i,j}\big)
$$
the inequalities ~\eqref{dq-geq2} and ~\eqref{dq+leq2} follow immediately.

Finally, we prove ~\eqref{dq-eq} and ~\eqref{dq+eq}.  We write
\begin{eqnarray}
d_1 &=& \sup \Big\{s: \limsup_{k\to \infty}\sum_{\bu\in\Sigma^k} c_\bu^{s(1-q)}p_\bu^q<\infty\Big\}, \\
d_2 &=& \sup \Big\{s: \sum_{k=1}^\infty \sum_{\bu\in\Sigma^k} c_\bu^{s(1-q)}p_\bu^q<\infty\Big\}.
\end{eqnarray}
It is sufficient to show that $d_q^-=d_1=d_2$.

Note that, for every $|\bu|=k$ and $0<s_1<s$, the following inequality holds
\begin{equation}\label{phis1}
c_\bu^{s_1}= c_\bu^{(s_1-s)} c_\bu^s \geq (c^*)^{-k(s-s_1)}c_\bu^s.
\end{equation}
For each non-integral  $s<d_1$, we write that
$$
\limsup_{k\to \infty} \sum_{\bu\in\Sigma^k} c_\bu^{s(1-q)}p_\bu^q=M<\infty.
$$
It  immediately follows that for all $s_1<s$,
$$
\sum_{k=1}^\infty \sum_{\bu\in\Sigma^k} c_\bu^{s_1(1-q)} p_\bu^q \leq \sum_{k=0}^\infty M (c^*)^{k(s-s_1)(q-1)}<\infty,
$$
and it implies that $d_1\leq d_2$.

For each $s<d_2$, we have that
$$
\sum_{k=1}^\infty \sum_{\bu\in\Sigma^k} c_\bu^{s(1-q)}p_\bu^q<\infty.
$$
Since
$$
\sum_{\bu\in\Sigma^*(s, r)}c_\bu^{s(1-q)}p_\bu^q\leq \sum_{k=1}^\infty \sum_{\bu\in\Sigma^k} c_\bu^{s(1-q)}p_\bu^q,
$$
we obtain that $d_2\leq d_q^-$.

On the other hand, for all non-integral $s<d_q^-$, suppose that
$$
\limsup_{r\to 0}\sum_{\bu\in\Sigma^*(s, r)}c_\bu^{s(1-q)}p_\bu^q=M.
$$
For each $s_1<s$, For all  $\bu\in \Sigma^*(s,r)$,  recall that $ c_{*} r < c_\mathbf{u} \leq r
$,  and this implies that
$$
c_\bu^{s_1} \geq r^{s_1-s} c_\bu^s.
$$
It follows that
$$
\sum_{\bu\in\Sigma^*(s, r)}c_\bu^{s_1(1-q)}p_\bu^q\leq Mr^t.
$$
where $t=(s-s_1)(q-1)>0$.

By choosing $\rho$ such that $\alpha_+<\rho<1$, we have that
$$
\Sigma^*=\bigcup_{k=0}^\infty \Sigma^k=\bigcup_{l=0}^\infty\Sigma^*(s,\rho^l),
$$ and  it implies that
\begin{eqnarray*}
 \sum_{\bu\in\Sigma^k} c_\bu^{s_1(1-q)}p_\bu^q &\leq& \sum_{k=0}^\infty \sum_{\bu\in\Sigma^k} c_\bu^{s_1(1-q)}p_\bu^q   \\
 &\leq& \sum_{l=0}^\infty \sum_{\bu\in\Sigma^*(s,\rho^l)} c_\bu^{s_1(1-q)}p_\bu^q   \\
 &\leq& \sum_{l=0}^\infty M\rho^{lt},
\end{eqnarray*}
for all $k>0$. It follows that
$$
\limsup_{k\to \infty} \sum_{\bu\in\Sigma^k} c_\bu^{s_1(1-q)}p_\bu^q<\infty,
$$
and we obtain that $d_q^-\leq d_1$. Therefore we obtain that $d_1=d_2 = d_q^-$. Since
$$
 \sum_{\bu\in\Sigma^k} c_\bu^{s_1(1-q)}p_\bu^q= \prod_{i=1}^k \sum_{j=1}^{n_i}c_{i,j}^{s(1-q)}p_{i,j}^q
$$
the equality~\eqref{dq-eq}  holds.

To prove \eqref{dq+eq}, similarly we write
\begin{eqnarray}
d_3 &=& \inf \Big\{s: \limsup_{k\to \infty}\sum_{\bu\in\Sigma^k} c_\bu^{s(1-q)}p_\bu^q<\infty\Big\}, \\
d_4 &=& \inf \Big\{s: \sum_{k=1}^\infty \sum_{\bu\in\Sigma^k} c_\bu^{s(1-q)}p_\bu^q<\infty\Big\}.
\end{eqnarray}
It is sufficient to show that $d_q^+=d_3=d_4$.

Note that, for every $|\bu|=k$ and $0<s<s_1$, the following inequality holds
\begin{equation}\label{phis1}
c_\bu^{s_1}\leq c_\bu^{(s_1-s)}c_\bu^s \leq (c^*)^{k(s_1-s)}c^s_\bu.
\end{equation}
For each non-integral  $s>d_3$, we write that
$$
\limsup_{k\to \infty} \sum_{\bu\in\Sigma^k} c_\bu^{s(1-q)}p_\bu^q=M<\infty.
$$
It  immediately follows that for all $s_1>s$,
$$
\sum_{k=1}^\infty \sum_{\bu\in\Sigma^k}c_\bu^{s(1-q)}p_\bu^q \leq \sum_{k=0}^\infty M (c^*)^{k(s_1-s)(1-q)}<\infty,
$$
and it implies that $d_4\leq d_3$.

For each $s>d_4$, we have that
$$
\sum_{k=1}^\infty \sum_{\bu\in\Sigma^k} c_\bu^{s(1-q)}p_\bu^q<\infty.
$$
Since
$$
\sum_{\bu\in\Sigma^*(s, r)}c_\bu^{s(1-q)}p_\bu^q\leq \sum_{k=1}^\infty \sum_{\bu\in\Sigma^k} c_\bu^{s(1-q)}p_\bu^q,
$$
we obtain that $d_q^+\leq d_4$.

On the other hand, for all non-integral $s>d_q^+$, suppose that
$$
\limsup_{r\to 0}\sum_{\bu\in\Sigma^*(s, r)} c_\bu^{s(1-q)}p_\bu^q      =M.
$$
For each $s_1>s$,  we have  that $c_\bu^{s_1} \leq r^{s_1-s} c_\bu^s $ for all  $\bu\in \Sigma^*(s,r)$.
It follows that
$$
\sum_{\bu\in\Sigma^*(s, r)}c_\bu^{s_1(1-q)}p_\bu^q\leq Mr^t.
$$
where $t=(s_1-s)(1-q)>0$.

By choosing $\rho$ such that $\alpha_+<\rho<1$, we have that  $\Sigma^*=\bigcup_{k=0}^\infty \Sigma^k=\bigcup_{l=0}^\infty\Sigma^*(s,\rho^l)$, and  it implies that
\begin{eqnarray*}
\sum_{\bu\in\Sigma^k} c_\bu^{s_1(1-q)}p_\bu^q &\leq&\sum_{k=0}^\infty \sum_{\bu\in\Sigma^k} c_\bu^{s_1(1-q)}p_\bu^q  \\
&\leq& \sum_{l=0}^\infty \sum_{\bu\in\Sigma^*(s,\rho^l)} c_\bu^{s_1(1-q)}p_\bu^q   \\
&\leq& \sum_{l=0}^\infty M\rho^{lt},
\end{eqnarray*}
for all $k>0$. It follows that
$$
\limsup_{k\to \infty} \sum_{\bu\in\Sigma^k} c_\bu^{s_1(1-q)}p_\bu^q<\infty,
$$
and we obtain that $d_3\leq d_q^+$. Therefore $d_3=d_4=d_q^+$, and the equality \eqref{dq+eq} holds.

\end{proof}

\begin{proof}[Proof of Corollary~\ref{cor_MS}]
Given a nonautonomous similar set $E$. Let $\mu^\w$ be the nonautonomous similar measure  with $\bp_k=\bp=(p_{1},p_{2},\cdots, p_{n_0})$ for all $k>0$. By Corollary~\ref{corNAss}, it is sufficient to prove that $d_q^-=d_q^+=d_q$
where  $d_q$ is the unique solution to
$$
\sum_{i=1}^{n_0} c_i^{d_q(1-q)} p_i^q=1 \quad   \textit{  for $q\neq 1$  \qquad  and  \qquad } 
d_1=\frac{\sum_{i=1}^{n_0} p_i \log p_i }{\sum_{i=1}^{n_0} p_i \log c_i }.
$$

For $q>0$ and $q\neq 1$. Since $E$ is a nonautonomous similar set with  $\Xi_{k}=\Xi=(c_{1},c_{2},\cdots, c_{n_0})$ for all $k>0$, and $\mu$ is a Benoulli probability measure on $\Sigma^\infty$ with $\bp_k=(p_{1},p_{2},\cdots, p_{n_0})$ for all  $k>0$, it is clear that
$$\lim_{k\to \infty} \Big(\sum_{\bu\in\Sigma^k} c_\bu^{s(1-q)} p_\bu^q\Big)^{\frac{1}{k}}=\sum_{i=1}^{n_0} c_i^{s(1-q)} p_i^q.
$$
This implies that
$$
d_q = \sup\Big\{s: \limsup_{k\to\infty} \sum_{\bu\in\Sigma^k} c_\bu^{s(1-q)} p_\bu^q<\infty\Big\}
  = \sup\Big\{s: \liminf_{k\to\infty} \sum_{\bu\in\Sigma^k} c_\bu^{s(1-q)} p_\bu^q<\infty\Big\},
$$
for $q>1$, and
$$
d_q = \inf\Big\{s: \liminf_{k\to\infty} \sum_{\bu\in\Sigma^k} c_\bu^{s(1-q)} p_\bu^q<\infty\Big\}
  = \inf\Big\{s: \limsup_{k\to\infty} \sum_{\bu\in\Sigma^k} c_\bu^{s(1-q)} p_\bu^q<\infty\Big\},
$$
for $0<q<1$.
By Proposition~\ref{leq_dq}, it immediately follows that
$$
d_q \leq d_q^- \leq d_q^+ \leq d_q,
$$
for all $q>0$ and $q\neq 1$, and we obtain that for all $q>0$ and $q\neq 1$,
$$
d_q^-=d_q^+=d_q.
$$

For $q=1$, it is clear that
$$
\sum_{i=1}^{n_0} p_i \log (c_i^{-d_1} p_i) = 0,
$$
and
$$
\lim_{k\to \infty} \frac{1}{k} \sum_{\bu\in \Sigma^k} p_\bu \log (c_\bu^{-s} p_\bu) = \sum_{i=1}^{n_0} p_i \log (c_i^{-s} p_i).
$$
Hence, we have that
$$
d_1 = \sup\big\{s: \limsup_{k\to\infty} \sum_{\bu\in \Sigma^k} p_\bu \log (c_\bu^{-s} p_\bu)<\infty\big\} = \sup\big\{s: \liminf_{k\to\infty} \sum_{\bu\in \Sigma^k} p_\bu \log (c_\bu^{-s} p_\bu)<\infty\big\}.
$$
Since
$$
\sum_{\bu\in \Sigma^k} p_\bu \log (c_\bu^{-s} p_\bu)= k \sum_{i=1}^{n_0} \Big(p_i\log p_i  -s p_i\log c_i \Big),
$$
by Proposition~\ref{leq_dq}, we have that $d_1 \leq d_1^- \leq d_1^+ \leq d_1$, and it immediately follows that
$$
d_1^-=d_1^+=d_1.
$$

Therefore, we obtain that $d_q^-=d_q^+=d_q$ for all $q>0$, and the conclusion holds.
\end{proof}

\section{Generalized $q$-dimensions of measures on nonautonomous affine sets}\label{sec 4}
\setcounter{equation}{0}
\setcounter{pro}{0}
In this section, we investigate upper bounds for generalized $q$-dimensions of measures supported on nonautonomous affine sets, and  it is often convenient to express generalized $q$-dimensions as integrals of measures of balls rather than as moment sums.

\begin{pro} \label{pro_eqdefDq}
The generalized $q$-dimensions have integral forms:
\be\label{gendimint}
\underline{D}_q(\nu)=\liminf_{r\to 0}\frac{\log\int\nu(B(x,r))^{q-1} d\nu(x)}{(q-1)\log r},
\overline{D}_q(\nu)=\limsup_{r\to 0}\frac{\log\int\nu(B(x,r))^{q-1} d\nu(x)}{(q-1)\log r},
\ee
for $q>0, q \neq 1$, and
\be\label{infdimint}
\underline{D}_1(\nu)=\liminf_{r\to 0}\frac{\int\log\nu(B(x,r)) d\nu(x)}{\log r}, \quad
\overline{D}_1(\nu)=\limsup_{r\to 0}\frac{\int\log\nu(B(x,r)) d\nu(x)}{\log r}.
\ee
Moreover, $\underline{D}_q(\nu)$ and $\overline{D}_q(\nu)$ are monotonic decreasing in $q$.

\end{pro}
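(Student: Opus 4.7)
The plan is to first establish the two integral representations and then deduce monotonicity from Jensen's (equivalently, the power mean) inequality applied to the integral form.

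For the equivalence of the mesh-cube definition with the integral form when $q\neq 1$, I would write
$$
\int \nu(B(x,r))^{q-1}\,d\nu(x) = \sum_{Q\in\mathcal{M}_r}\int_Q \nu(B(x,r))^{q-1}\,d\nu(x),
$$
and compare the inner integrand with $\nu(Q)^{q-1}$. The comparison rests on two routine geometric facts: (i) for $x\in Q\in\mathcal{M}_r$ one has $Q\subset B(x,\sqrt{d}\,r)$, hence $\nu(Q)\leq \nu(B(x,\sqrt{d}\,r))$; (ii) any ball $B(x,r)$ meets at most $N_d$ mesh cubes of side $r$, so $\nu(B(x,r))\leq \sum_{Q'\cap B(x,r)\neq\emptyset}\nu(Q')$. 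Using these to sandwich $\nu(B(x,r))$ between constant multiples of $\max\{\nu(Q'):Q'\text{ near }Q\}$, and then summing, one obtains
$$
c_1\sum_{Q\in\mathcal{M}_r}\nu(Q)^q \;\leq\; \int\nu(B(x,r))^{q-1}\,d\nu(x)\;\leq\; c_2\sum_{Q\in\mathcal{M}_{C r}}\nu(Q)^q,
$$
for constants depending only on $d$ and $q$ (with the direction of inequality requiring a case split between $q>1$ and $0<q<1$, since $t\mapsto t^{q-1}$ is monotone increasing or decreasing accordingly). Dividing by $(q-1)\log r$ and taking liminf/limsup, the multiplicative constants and the passage from $r$ to $Cr$ disappear in the limit, which gives \eqref{gendimint}.

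For $q=1$, the same geometric comparison yields $|\log\nu(B(x,r)) - \log \nu(Q)|\leq \log N_d$ for $x\in Q$ up to considering cubes near the boundary, so
$$
\Bigl|\int\log\nu(B(x,r))\,d\nu(x) - \sum_{Q\in\mathcal{M}_r}\nu(Q)\log\nu(Q)\Bigr|
$$
is bounded uniformly in $r$. Division by $\log r\to -\infty$ kills this additive error and establishes \eqref{infdimint}.

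For monotonicity, I would work directly with the integral form, since it turns the question into a statement about $L^p$-norms of the function $f_r(x)=\nu(B(x,r))$ with respect to the probability measure $\nu/\nu(\R^d)$. For $1<q_1<q_2$, the standard $L^p$-inclusion $\|f\|_{p_1}\leq \|f\|_{p_2}$ with $p_i=q_i-1$ gives
$$
\frac{1}{q_1-1}\log\int\nu(B(x,r))^{q_1-1}\,d\nu(x) \;\leq\; \frac{1}{q_2-1}\log\int\nu(B(x,r))^{q_2-1}\,d\nu(x),
$$
and dividing by $\log r<0$ reverses the inequality to yield $\underline{D}_{q_1}(\nu)\geq \underline{D}_{q_2}(\nu)$ and similarly for the upper versions. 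The ranges $0<q<1$ and the endpoint $q=1$ are handled by the same power mean inequality with the appropriate sign bookkeeping, noting that Jensen applied to the concave function $\log$ gives the correct comparison with the $q=1$ integrand.

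The main obstacle will be the case-by-case bookkeeping: the direction of Jensen's inequality, and whether $(q-1)\log r$ is positive or negative, switch across the values $q=1$ and $q=0$, so the monotonicity argument must be repeated with sign adjustments in the ranges $q>1$, $0<q<1$, and at the endpoint $q=1$. The geometric mesh-versus-ball comparison is essentially routine, but care is needed to ensure the additive and multiplicative errors it generates are swallowed correctly when passing to $\liminf_{r\to 0}$ and $\limsup_{r\to 0}$.
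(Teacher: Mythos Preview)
Your treatment of the case $q\neq 1$ and of monotonicity is essentially the standard argument (which the paper simply cites), so that part is fine.

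The $q=1$ case, however, has a genuine gap. You claim a pointwise bound of the form $|\log\nu(B(x,r)) - \log\nu(Q)| \leq \log N_d$ for $x\in Q$. This is false in general: while $\nu(Q)\leq \nu(B(x,\sqrt{d}\,r))$ always holds, the reverse comparison fails because $B(x,r)$ may overlap a neighbouring mesh cube carrying vastly more mass than $Q$ itself. The ratio $\nu(\widetilde{Q})/\nu(Q)$ (with $\widetilde{Q}$ the $3^d$-block of neighbouring cubes) is not uniformly bounded, so neither is $\log\nu(B(x,r))-\log\nu(Q)$. Your qualifier ``up to considering cubes near the boundary'' does not help: the problematic cubes are precisely those with small $\nu(Q)$ sitting next to heavy cubes, and there is no a priori control on how many of these there are or where they sit.

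What the paper does instead is exactly the missing idea: it stratifies the mesh cubes into level sets $\mathcal{S}_k=\{Q: 2^{k-1}\nu(Q)\leq \nu(\widetilde{Q})<2^k\nu(Q)\}$ and observes that although $k$ can be arbitrarily large, the total mass $\sum_{Q\in\mathcal{S}_k}\nu(Q)$ is at most $2^{1-k}3^d$ (because each $Q$ with large ratio is ``paid for'' by the mass of its neighbours, counted with bounded multiplicity). This makes $\sum_Q \nu(Q)\log\bigl(\nu(\widetilde{Q})/\nu(Q)\bigr)\leq \sum_k 3^d\,2^{1-k}\,k\log 2<\infty$, which is the uniform additive bound you asserted but did not prove. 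Once you insert this level-set step, the rest of your argument for $q=1$ goes through.
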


\begin{proof}
Identity (\ref{gendimint}) is straightforward for $q>1$, see, for example, \cite{P}. The case of $0<q<1$ was established in \cite{PSol1}.

For $q=1$, given $0<r<1$, for each $x\in \mathbb{R}^d$ write $Q(x)$ for the $r$-mesh cube containing $x$. Then
$$
\sum_{\mathcal{M}_r}\nu(Q)\log\nu(Q)
= \int\log\nu(Q(x)) d\nu(x)
\leq \int\log\nu(B(x,\sqrt{d} r) d\nu(x),
$$
and dividing by $\log r$ and taking the limits gives that the expressions of (\ref{infdim}) are  at least the corresponding ones of  (\ref{infdimint}).

For the opposite inequalities, fix  $0<r<1$ and write $\widetilde{Q}$ for the cube of side $3r$ formed by the $3^N$ cubes in $\mathcal{M}_r$ consisting of $Q$ and its immediate neighbours.
Let $\mathcal{S}_k \; (k=1,2,3,\ldots)$ be the set of mesh cubes satisfying
$$\mathcal{S}_k = \{Q \in \mathcal{M}_r: 2^{k-1}\nu(Q) \leq \nu(\widetilde{Q} )< 2^{k}\nu(Q)\}.
$$
This implies that
\be
\sum_{Q \in \mathcal{S}_k}\nu(Q)
\leq 2^{1-k} \sum_{Q \in \mathcal{S}_k}\nu(\widetilde{Q})
\leq 2^{1-k} 3^d \sum_{Q \in  \mathcal{M}_r}\nu(Q) = 2^{1-k} 3^d. \label{cksize}
\ee
By \eqref{cksize}, it follows that
\begin{eqnarray*}
\int\log\nu(B(x,r)) d\nu(x)
&\leq&  \sum_{Q \in \mathcal{M}_r}\nu(Q) \log\nu(\widetilde{Q})\\
&\leq& \sum_{k=1}^\infty \sum_{Q \in \mathcal{S}_k}\nu(Q) \log(2^k\nu(Q))\\
&\leq& \sum_{k=1}^\infty \sum_{Q \in \mathcal{S}_k}\nu(Q)\big(\log\nu(Q) +k\log 2\big)\\
&=& \sum_{Q \in \mathcal{M}_r}\nu(Q)\log \nu(Q) +   \sum_{k=1}^\infty 3^d 2^{1-k}k \log 2.
\end{eqnarray*}
Since the right hand sum is finite, dividing by $\log r$ and taking the limit completes the argument for $q=1$.
\end{proof}

Let $\mu$ be  a positive finite Borel measure on $\Sigma^\infty$. Let $d_q^-$ be given by ~\eqref{def_adq-}.  Next, we give convenient alternative forms of $d_q^-$, which are closely related to the lower generalized $q$-dimensions.

Given $s>0$ and $r>0$, recall that
$$
\Sigma^*(s, r)=\{\mathbf{u}=u_1\ldots u_k \in \Sigma^*: \alpha_m(T_\mathbf{u})  \leq r < \alpha_m(T_{\bu^-}) \}.
$$

\begin{pro}\label{pro_pdq_eq}
For $q>1$,
\begin{eqnarray*}
  d_q^- &=& \sup \Big\{s: \limsup_{k\to \infty}\sum_{\bu\in\Sigma^k} \psi^s(T_\bu)^{1-q}\mu(\mathcal{C}_\bu)^q<\infty\Big\} \\
  &=& \sup \Big\{s: \sum_{k=1}^\infty \sum_{\bu\in\Sigma^k} \psi^s(T_\bu)^{1-q}\mu(\mathcal{C}_\bu)^q<\infty\Big\}.
\end{eqnarray*}

\end{pro}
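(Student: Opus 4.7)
The plan is to adapt the argument used in the proof of Proposition~\ref{leq_dq} to the singular value setting, with $\psi^s(T_\bu)$ playing the role of the scalar contraction $c_\bu$ and singular value submultiplicativity providing the needed control. I denote
$$
D^\ast=\sup \Big\{s: \limsup_{k\to \infty}\sum_{\bu\in\Sigma^k} \psi^s(T_\bu)^{1-q}\mu(\mathcal{C}_\bu)^q<\infty\Big\}
$$
and
$$
D^{\sharp}=\sup \Big\{s: \sum_{k=1}^\infty \sum_{\bu\in\Sigma^k} \psi^s(T_\bu)^{1-q}\mu(\mathcal{C}_\bu)^q<\infty\Big\},
$$
so the target is $d_q^-=D^\ast=D^\sharp$.

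Two inequalities are essentially free. Since $\Sigma^*(s,r)\subset\Sigma^*$ for every $r$, any $s<D^\sharp$ satisfies
$$
\sum_{\bu\in\Sigma^*(s,r)}\psi^s(T_\bu)^{1-q}\mu(\mathcal{C}_\bu)^q\leq\sum_{k\geq 1}\sum_{\bu\in\Sigma^k}\psi^s(T_\bu)^{1-q}\mu(\mathcal{C}_\bu)^q<\infty
$$
uniformly in $r$, giving $D^\sharp\leq d_q^-$. Similarly, $\sum_k a_k<\infty$ forces $a_k\to 0$, so $D^\sharp\leq D^\ast$.

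For the key inequality $d_q^-\leq D^\sharp$, I fix non-integral $s<s'<d_q^-$ sharing the integer part $m=\lceil s\rceil=\lceil s'\rceil$ and put $M=\limsup_{r\to 0}\sum_{\bu\in\Sigma^*(s',r)}\psi^{s'}(T_\bu)^{1-q}\mu(\mathcal{C}_\bu)^q<\infty$. For $\bu\in\Sigma^*(s',r)$ one has $\alpha_m(T_\bu)\leq r$, and the shared integer part gives
$$
\psi^s(T_\bu)=\alpha_m(T_\bu)^{s-s'}\psi^{s'}(T_\bu)\geq r^{s-s'}\psi^{s'}(T_\bu),
$$
so raising to $1-q<0$ yields $\psi^s(T_\bu)^{1-q}\leq r^t\psi^{s'}(T_\bu)^{1-q}$ with $t=(s'-s)(q-1)>0$. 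Next I choose $\rho\in(\alpha_+,1)$; the standard inequality $\alpha_m(AB)\leq\alpha_m(A)\alpha_1(B)$ gives $\alpha_m(T_\bu)\leq\alpha_+\alpha_m(T_{\bu^-})$, which forces each half-open interval $[\alpha_m(T_\bu),\alpha_m(T_{\bu^-}))$ to contain some power $\rho^l$, and hence $\Sigma^*=\bigcup_{l\geq 0}\Sigma^*(s',\rho^l)$. Therefore,
$$
\sum_{k=1}^\infty\sum_{\bu\in\Sigma^k}\psi^s(T_\bu)^{1-q}\mu(\mathcal{C}_\bu)^q\leq\sum_{l=0}^\infty\sum_{\bu\in\Sigma^*(s',\rho^l)}\psi^s(T_\bu)^{1-q}\mu(\mathcal{C}_\bu)^q\leq M\sum_{l=0}^\infty\rho^{lt}<\infty,
$$
so $s\leq D^\sharp$; letting $s\uparrow d_q^-$ yields $d_q^-\leq D^\sharp$.

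For $D^\ast\leq D^\sharp$, I assume $\limsup_k\sum_{\bu\in\Sigma^k}\psi^s(T_\bu)^{1-q}\mu(\mathcal{C}_\bu)^q\leq M'$ and take $s_1<s$ non-integral with the same integer part. Iterating $\alpha_m(AB)\leq\alpha_1(A)\alpha_m(B)\leq\alpha_+\alpha_m(B)$ gives $\alpha_m(T_\bu)\leq\alpha_+^k$, so $\psi^{s_1}(T_\bu)\geq\alpha_+^{k(s_1-s)}\psi^s(T_\bu)$, and for $q>1$,
$$
\psi^{s_1}(T_\bu)^{1-q}\leq\rho_0^k\,\psi^s(T_\bu)^{1-q}\quad\text{with }\rho_0=\alpha_+^{(s-s_1)(q-1)}\in(0,1).
$$
Summing gives $\sum_{k\geq 1}\sum_{\bu\in\Sigma^k}\psi^{s_1}(T_\bu)^{1-q}\mu(\mathcal{C}_\bu)^q\leq M'\sum_k\rho_0^k<\infty$, so $s_1\leq D^\sharp$; taking $s_1\uparrow s$ gives $D^\ast\leq D^\sharp$. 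Combining the four inequalities gives $d_q^-=D^\ast=D^\sharp$.

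The main point of care, rather than a conceptual obstacle, is the bookkeeping around the piecewise definition of $\psi^s$: the identity $\psi^s(T_\bu)/\psi^{s'}(T_\bu)=\alpha_m(T_\bu)^{s-s'}$ holds only when $s$ and $s'$ share the integer part $m$, so all comparisons above must be carried out after restricting to $s, s'$ lying in a common open interval $(m-1,m)$. Since excluding the countable set of integer thresholds does not affect any of the suprema, this restriction is harmless.
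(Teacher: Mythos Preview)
Your proof is correct and follows essentially the same approach as the paper. The paper arranges the argument as a cyclic chain $d_1\leq d_2\leq d_q^-\leq d_1$ (with $d_1=D^\ast$, $d_2=D^\sharp$) rather than your two separate pairs of inequalities, but the underlying estimates---the singular value comparison $\psi^{s_1}(T_\bu)\geq\alpha_m(T_\bu)^{s_1-s}\psi^s(T_\bu)$, the decomposition $\Sigma^*=\bigcup_{l\geq 0}\Sigma^*(s,\rho^l)$ for $\alpha_+<\rho<1$, and the resulting geometric series bound---are identical.
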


\begin{proof}
We write
\begin{eqnarray*}
d_1 &=& \sup \Big\{s: \limsup_{k\to \infty}\sum_{\bu\in\Sigma^k} \psi^s(T_\bu)^{1-q}\mu(\mathcal{C}_\bu)^q<\infty\Big\}, \\
d_2 &=& \sup \Big\{s: \sum_{k=1}^\infty \sum_{\bu\in\Sigma^k} \psi^s(T_\bu)^{1-q}\mu(\mathcal{C}_\bu)^q<\infty\Big\}.
\end{eqnarray*}
It is sufficient to show that $d_q^-=d_1=d_2$.

Note that, for every $|\bu|=k$ and $0<s_1<s$, the following inequality holds
\begin{equation}\label{phis1}
\psi^{s_1}(T_\bu)\geq \alpha_m(T_\bu)^{(s_1-s)}\psi^s(T_\bu) \geq \alpha_+^{-k(s-s_1)}\psi^s(T_\bu).
\end{equation}
For each non-integral  $s<d_1$, we write that
$$
\limsup_{k\to \infty} \sum_{\bu\in\Sigma^k} \psi^s(T_\bu)^{1-q}\mu(\mathcal{C}_\bu)^q=M<\infty.
$$
It  immediately follows that for all $s_1<s$,
$$
\sum_{k=1}^\infty \sum_{\bu\in\Sigma^k}\psi^{s_1}(T_\bu)^{1-q}\mu(\mathcal{C}_\bu)^q \leq \sum_{k=0}^\infty M \alpha_+^{k(s-s_1)(q-1)}<\infty,
$$
and it implies that $d_1\leq d_2$.

For each $s<d_2$, we have that
$$
\sum_{k=1}^\infty \sum_{\bu\in\Sigma^k} \psi^s(T_\bu)^{1-q}\mu(\mathcal{C}_\bu)^q<\infty.
$$
Since
$$
\sum_{\bu\in\Sigma^*(s, r)}\psi^s(T_\bu)^{1-q}\mu(\mathcal{C}_\bu)^q\leq \sum_{k=1}^\infty \sum_{\bu\in\Sigma^k} \psi^s(T_\bu)^{1-q}\mu(\mathcal{C}_\bu)^q,
$$
we obtain that $d_2\leq d_q^-$.

Finally, for all non-integral $s<d_q^-$, suppose that
$$
\limsup_{r\to 0}\sum_{\bu\in\Sigma^*(s, r)}\psi^s(T_\bu)^{1-q}\mu(\mathcal{C}_\bu)^q=M.
$$
For each $s_1<s$, by ~\eqref{phis1}, we have  that for all  $\bu\in \Sigma^*(s,r)$,
$$
\psi^{s_1}(T_\bu) \geq r^{s_1-s} \psi^s(T_\bu).
$$
It follows that
$$
\sum_{\bu\in\Sigma^*(s, r)}\psi^{s_1}(T_\bu)^{1-q}\mu(\mathcal{C}_\bu)^q\leq Mr^t.
$$
where $t=(s-s_1)(q-1)>0$.  By choosing $\rho$ such that $\alpha_+<\rho<1$, we have that
$$
\Sigma^*=\bigcup_{k=0}^\infty \Sigma^k=\bigcup_{l=0}^\infty\Sigma^*(s,\rho^l),
$$
and  it implies that
\begin{eqnarray*}
 \sum_{\bu\in\Sigma^k}\psi^{s_1}(T_\bu)^{1-q}\mu(\mathcal{C}_\bu)^q
 &\leq& \sum_{l=0}^\infty \sum_{\bu\in\Sigma^*(s,\rho^l)}\psi^{s_1} (T_\bu)^{1-q}\mu(\mathcal{C}_\bu)^q   \\
 &\leq& \sum_{l=0}^\infty M\rho^{lt}  \\
 &<&\infty,
\end{eqnarray*}
for all $k>0$. It follows that
$$
\limsup_{k\to \infty} \sum_{\bu\in\Sigma^k} \psi^{s_1}(T_\bu)^{1-q}\mu(\mathcal{C}_\bu)^q<\infty,
$$
and we obtain that $d_q^-\leq d_1$. Therefore the conclusion holds.

\end{proof}

Next, we prove that $d_q^-$ and $d_q^+$ given by ~\eqref{def_adq-} and ~\eqref{def_adq+} are the upper bounds of lower and upper generalized $q$- dimension of measures supported on nonautonomous affine sets. The key argument is similar to  Theorem~\ref{thm_DqU}, but it is more complicated because of the complex geometric structure and the use of singular value functions.

\begin{proof}[Proof of Theorem~\ref{Dq_ub}]
We only give the proof for $\overline{D}_q(\mu^\w)\leq\min\{d_q^+,d\}$ since the proof for $\underline{D}_q(\mu^\w)\leq \min\{d_q^-,d\}$ is similar.

Let $B$ be a sufficiently large ball such that $J\subset B$. Given $\delta > 0$, there exists an integer $k>0$ such that $|\Psi_\mathbf{u}(B)| < \delta$ for all $|\mathbf{u}| \geq k$. Let $A$ be a covering set of $\Sigma^\infty$ such that $|\mathbf{u}| \geq k$ for each $\mathbf{u}\in A$. Then $E \subset \bigcup_{\mathbf{u}\in A} \Psi_\mathbf{u}(B)$. Every ellipsoid $\Psi_\mathbf{u}(B)$ is contained in a rectangular parallelepiped of side lengths $2|B|\alpha_1, \ldots, 2|B|\alpha_d$ where the $\alpha_i$ are the singular values of $T_\mathbf{u}$.

Given  $s>0$ and $r>0$.  Recall that
$$
\Sigma^*(s, r)=\{\mathbf{u}=u_1\ldots u_k \in \Sigma^*: \alpha_m(T_\mathbf{u})  \leq r < \alpha_m(T_{\bu^-}) \}.
$$
Let $m$ be integer such that $m-1<s\leq m$. We divide such a parallelepiped into
\begin{equation}\label{number}
N \leq (4|B|)^d\alpha_1 \alpha_2 \ldots \alpha_{m-1} \alpha_m^{1-m}= C_1 \psi^s(T_\mathbf{u}) \alpha_m^{-s}
\end{equation}
cubes of side $\alpha_m \leq r$,  where $C_1= (4|B|)^d$ is a constant, and  we write these cubes as $Q_1,\ldots,Q_N$. Let $\mu_\bu$ denote the restriction
of $\mu$ to the cylinder $\mathcal{C}_\bu$ and let $\mu_\bu^\w$
be its image under $\Pi^\w$.

For  $q>1$.  By Jensen's inequality, it follows that
$$
\mu(\mathcal{C}_\bu)^q = \mu_\bu^\w(T_\bu(B))^q=\bigg(\sum_{k=1}^N \mu_\bu^\w(Q_k)\bigg)^q \leq N^{q-1} \sum_{k=1}^N \mu_\bu^\w(Q_k)^q,
$$
and  by ~\eqref{number},
$$
\Big (C_1 \psi^s(T_\mathbf{u}) \alpha_m^{-s}\Big)^{1-q} \mu(\mathcal{C}_\bu)^q \leq \sum_{k=1}^N \mu_\bu^\w(Q_k)^q.
$$
Since $r$ is comparable with $\alpha_m$,  we have that
$$
r^{s(1-q)}\sum_{Q\in \mathcal{M}_r} \mu_\bu^\w(Q)^q \geq C_2 r^{s(1-q)} \sum_{k=1}^N \mu_\bu^\w(Q_k)^q \geq C_3 \psi^s(T_\mathbf{u})^{1-q} \mu(\mathcal{C}_\bu)^q,
$$
where the constants $C_2$ and  $C_3$ do not depend on $\bu$ or $\w$. For each $Q\in \mathcal{M}_r$,  by the power inequality, we have that
$$
\mu^\w(Q)^q = \bigg(\sum_{\bu\in\Sigma^s(r)} \mu_\bu^\w(Q)\bigg)^q \geq \sum_{\bu\in\Sigma^*(s,r)} \mu_\bu^\w(Q)^q.
$$
Summing it over $Q$, we have that
\begin{equation}\label{rsmuw}
r^{s(1-q)}\sum_{Q\in \mathcal{M}_r} \mu^\w(Q)^q \geq C_4\sum_{\bu\in\Sigma^*(s,r)}\psi^s(T_\bu)^{1-q}\mu(\mathcal{C}_\bu)^q,
\end{equation}
where the constant $C_4$ does not depend on $\bu$ or $\w$.

For all $s>d_q^+$, since $q>1$, by \eqref{def_adq+}, we have that
$$
 \liminf_{r\to 0}\sum_{\bu\in\Sigma^*(s, r)}\psi^s(T_\bu)^{1-q}\mu(\mathcal{C}_\bu)^q=\infty.
$$
Combining with ~\eqref{rsmuw}, this implies that
$$
s \geq \limsup_{r\to 0}\frac{\log \sum_{Q\in\mathcal{M}_r} \mu^\w(Q)^q}{(q-1)\log r}=\overline{D}_q(\mu^\w),
$$
and it immediately follows that
$$
\overline{D}_q(\mu^\w)\leq d_q^+.
$$

For  $0<q<1$. Both Jensen's inequality and the power inequality work in the reverse direction, and we obtain that
\begin{equation} \label{eq_rmpm}
r^{s(1-q)}\sum_{Q\in \mathcal{M}_r} \mu^\w(Q)^q \leq C\sum_{\bu\in\Sigma^*(s,r)}\psi^s(T_\bu)^{1-q}\mu(\mathcal{C}_\bu)^q,
\end{equation}
where the constant $C>0$ does not depend on $\bu$ or $\omega$.

For all $s>d_q^+$, by \eqref{def_adq+}, we have that

$$
\limsup_{r\to 0}\sum_{\bu\in\Sigma^*(s, r)}\psi^s(T_\bu)^{1-q}\mu(\mathcal{C}_\bu)^q<\infty.
$$
Combining with ~\eqref{eq_rmpm}, this implies that
$$
s \geq \limsup_{r\to 0}\frac{\log \sum_{Q\in\mathcal{M}_r} \mu^\w(Q)^q}{(q-1)\log r}=\overline{D}_q(\mu^\w),
$$
and it immediately follows that
$$
\overline{D}_q(\mu^\w)\leq d_q^+.
$$

For  $q=1$. For each $\bu\in\Sigma^*(s, r) $, the  ellipsoid $\Psi_\mathbf{u}(B)$ is divided into  cubes $Q_1,\ldots,Q_N$. By Jensen's inequality, we have that
\begin{eqnarray*}
\mu(\mathcal{C}_\bu)\log \mu(\mathcal{C}_\bu) &=&\Big(\sum_{k=1}^N \mu_\bu^\w(Q_k)\Big)\log \Big(\sum_{k=1}^N \mu_\bu^\w(Q_k)\Big)  \\ &\leq& \sum_{k=1}^N \mu_\bu^\w(Q_k) \log \mu_\bu^\w(Q_k) + \mu(\mathcal{C}_\bu)\log N.
\end{eqnarray*}
By \eqref{number}, it follows that
$$
\sum_{k=1}^N \mu_\bu^\w(Q_k) \log \mu_\bu^\w(Q_k) \geq \mu(\mathcal{C}_\bu) \log (N^{-1} \mu(\mathcal{C}_\bu)) \geq \mu(\mathcal{C}_\bu) \log \big[C_1r^s\psi^s(T_\bu)^{-1} \mu(\mathcal{C}_\bu)\big].
$$
Since $\sum_{\bu\in\Sigma^*(s,r)} \mu_\bu^\w(Q)=\mu^\w(Q)$, this implies  that
\begin{eqnarray}
\sum_{Q\in \mathcal{M}_r} \mu^\w(Q)\log \mu^\w(Q) &=& \sum_{Q\in \mathcal{M}_r} \Big(\sum_{\bu\in\Sigma^*(s,r)} \mu_\bu^\w(Q)\Big) \log \Big(\sum_{\bu\in\Sigma^*(s,r)} \mu_\bu^\w(Q)\Big) \nonumber \\
&\geq& \sum_{\bu\in\Sigma^*(s,r)} \sum_{Q\in \mathcal{M}_r} \mu_\bu^\w(Q) \log \mu_\bu^\w(Q) \nonumber \\
&\geq& \sum_{\bu\in\Sigma^*(s,r)} \mu(\mathcal{C}_\bu) \log \Big[C_1r^s\psi^s(T_\bu)^{-1}\mu(\mathcal{C}_\bu)\Big] \nonumber \\
&=& s\log r+ \sum_{\bu\in\Sigma^*(s,r)} \mu(\mathcal{C}_\bu) \log \Big[C_1\psi^s(T_\bu)^{-1}\mu(\mathcal{C}_\bu)\Big]. \label{ineq_summuQ}
\end{eqnarray}

For all $s>d_1^+$, by \eqref{def_adq+}, we have that
$$
 \liminf_{r\to 0}\sum_{\bu\in\Sigma^*(s, r)}\mu(\mathcal{C}_\bu)\log\Big(\psi^s(T_\bu)^{-1}\mu(\mathcal{C}_\bu) \Big)  = \infty.
$$
Combining with ~\eqref{ineq_summuQ}, this implies that
$$
s \geq \limsup_{r\to 0}\frac{\sum_{Q\in \mathcal{M}_r} \mu^\w(Q)\log \mu^\w(Q) }{\log r}=\overline{D}_1(\mu^\w),
$$
and it immediately follows that
$$
\overline{D}_q(\mu^\w)\leq d_q^+.
$$

Since $\overline{D}_q(\mu^\w)\leq d_q^+$  holds for all $q>0$, we  obtain that
$$
\overline{D}_q(\mu^\w)\leq\min\{d_q^+,d\},
$$
and the proof is completed.
\end{proof}

\section{Lower bounds for generalized $q$-dimensions of measures on nonautonomous affine sets}\label{sec 5}
\setcounter{equation}{0}
\setcounter{pro}{0}

Recall that $\mathcal{D}$ is a bounded region in $\R^d$. All translations $\omega_\bu\in  \mathcal{D} $ are  independent identically distributed random vectors
according to the probability measure $P=P_\bu$ which is absolutely continuous with respect to $d$-dimensional Lebesgue measure. The product probability measure $\mathbf{P} $ on
the family  $\omega =\{\omega_\bu : \bu \in  \Sigma^* \}$ is given by
$$
\mathbf{P} = \prod_{\bu\in \Sigma^*} P_\bu.
$$

Let $\mathbb{E}$ denote expectation. Given $\bjj \subseteq \Sigma^*$ we write $\mathcal{F} = \sigma\{\w_\bu : \bu \in \bjj\}$ for the sigma-field
generated by the random displacements $\w_\bu$ and write $\mathbb{E}(X|\mathcal{F})$ for the expectation of a random variable $X$ conditional on $\mathcal{F}$.
The following conclusion is proved in \cite{GM}, and the version of almost self-affine set is proved by Falconer in \cite{Falco88}.
\begin{lem}\label{lemE}
Let $s$ be non-integral such that $0 < s < d$. Then there exists a constant $C>0$ such that
$$
\mathbb{E}\Big(|\Pi^\w(\bu)-\Pi^\w(\bv)|^{-s}\  \Big| \ \mathcal{F}\Big) \leq  \frac{ C }{\psi^{s}(T_{\bu \wedge \bv})},
$$
for all $\bu,\bv\in \Sigma^\infty$, $\bu \neq \bv$, where $\mathcal{F} =\sigma\{\w_\bu : \bu\in \Lambda\}$ for any subset $\Lambda $ of $\Sigma^*$ such that $\bv|k+1, \bv|k+2,\ldots\in \Lambda$ and $\bu|k+2, \bu|k+3,\ldots \in \Lambda $ but $\bu|k+1\notin \Lambda$, where $|\bu\wedge \bv|=k$.
\end{lem}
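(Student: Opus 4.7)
The plan is to expand the difference $\Pi^\w(\bu)-\Pi^\w(\bv)$ using the series representation given in \eqref{points2}, isolate the one random translation that is independent of $\mathcal{F}$, and then reduce the conditional expectation to a deterministic integral of the form $\int |T_\bw y|^{-s}\,dy$ on a bounded set, which is controlled by the singular value function via a classical Falconer-type estimate.

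More concretely, let $\bw=\bu\wedge\bv$ with $|\bw|=k$, so that $\bu|_k=\bv|_k=\bw$ and $u_{k+1}\neq v_{k+1}$. In the series
$$
\Pi^\w(\bu)=\w_{u_1}+S_{1,u_1}\w_{u_1u_2}+\cdots+S_{\bu|j}\w_{\bu|j+1}+\cdots,
$$
all terms indexed by $j\le k-1$ coincide for $\bu$ and $\bv$, and the compositions $S_{\bu|k}=S_{\bv|k}=T_\bw$ factor out of every remaining term. Consequently
$$
\Pi^\w(\bu)-\Pi^\w(\bv)=T_\bw\bigl(A_\bu-A_\bv\bigr),
$$
where $A_\bu=\w_{\bu|k+1}+S_{k+1,u_{k+1}}\w_{\bu|k+2}+\cdots$ and analogously for $A_\bv$. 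By the choice of $\Lambda$, every translation appearing in $A_\bv$ and every translation in $A_\bu$ except $\w_{\bu|k+1}$ is $\mathcal{F}$-measurable, so we can write $A_\bu-A_\bv=\w_{\bu|k+1}+Z$ with $Z$ an $\mathcal{F}$-measurable random vector and $\w_{\bu|k+1}$ independent of $\mathcal{F}$ with distribution $P$.

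Writing $f$ for the (bounded) Lebesgue density of $P$ on the bounded region $\mathcal{D}$, and noting that $|Z|$ is uniformly bounded because all translations lie in $\mathcal{D}$ and the linear parts are contractions with $\alpha_+<1$, one obtains
$$
\mathbb{E}\bigl(|\Pi^\w(\bu)-\Pi^\w(\bv)|^{-s}\,\big|\,\mathcal{F}\bigr)=\int_{\mathcal{D}}|T_\bw(x+Z)|^{-s}f(x)\,dx\;\le\;\|f\|_\infty\int_{B(0,R)}|T_\bw y|^{-s}\,dy
$$
for a constant $R$ independent of $\w$ (obtained by translating and enlarging the integration domain to absorb $Z$). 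The final step is the classical singular value integral estimate: for non-integral $s\in(0,d)$ and any non-singular $T$,
$$
\int_{B(0,R)}|T y|^{-s}\,dy\;\le\;\frac{C(R,d,s)}{\psi^s(T)},
$$
proved by diagonalising $T$ via the singular value decomposition, rescaling each axis, and splitting the resulting integral over $\mathbb{R}^d$ into the range where the first $m-1$ coordinates (with $m-1<s<m$) dominate versus the range where $|y_m|$ dominates. Applying this with $T=T_\bw$ yields the desired bound $C/\psi^s(T_\bw)$.

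The only delicate point is the singular value integral estimate itself, and in particular the non-integrality assumption on $s$, which prevents logarithmic divergences when integrating $|y_m|^{m-s-1}$ in the borderline direction. The rest of the argument is essentially bookkeeping: identifying which translation is independent of $\mathcal{F}$, uniformly bounding $|Z|$ using $\alpha_+<1$ and the boundedness of $\mathcal{D}$, and handling the density of $P$. Once the integral estimate is in place, everything else is routine.
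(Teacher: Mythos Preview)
The paper does not actually prove Lemma~\ref{lemE}; it is quoted from \cite{GM} (with the self-affine prototype going back to Falconer~\cite{Falco88}). Your sketch is exactly the standard argument behind those references: factor out $T_{\bu\wedge\bv}$ from the tail series, isolate the single translation $\w_{\bu|k+1}$ that is independent of $\mathcal{F}$, condition to reduce to an integral $\int_{B(0,R)}|T_{\bu\wedge\bv}y|^{-s}\,dy$, and invoke the singular value integral bound. That final bound is precisely the paper's Lemma~\ref{lemtphi}, so your approach and the intended one coincide.

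One small point to tighten: you write ``$f$ the (bounded) Lebesgue density of $P$'', but the paper only assumes $P$ is absolutely continuous, not that its density is in $L^\infty$. In the cited proofs one either adds this hypothesis or notes that the constant $C$ is allowed to depend on $P$ and handles an unbounded density by a routine approximation; just be explicit about which route you take.
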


Given $\bu_1, \ldots, \bu_n \in  \Sigma^\infty$, we denote the join set of $\bu_1, \ldots, \bu_n$ by
\begin{equation}
\bigwedge(\bu_1, \ldots, \bu_n) = \{\bu_i \wedge \bu_j : i \neq j\},
\end{equation}
which is the collection of join points with repetitions counted by multiplicity in a natural way. We define multienergy kernels by forming products of the singular value functions at the vertices of join sets. For $\bu_1, \ldots, \bu_n \in  \Sigma^\infty$, let
\begin{equation}
\psi^s(\bu_1, \ldots, \bu_n)= \psi^s(T_{\bw_1}) \psi^s(T_{\bw_2}) \ldots \psi^s(T_{\bw_{n-1}}).
\end{equation}
where $\{\bw_1,\ldots,\bw_{n-1}\}=\bigwedge(\bu_1, \ldots, \bu_n)$. The multienergy kernel $\psi^s(\bu_1, \ldots, \bu_n)$ is the key to study the generalized $q$-dimension of nonautonomous affine measures.

\begin{pro}\label{calE}
Given an integer $n\geq 1$ and a real $q$ such that $1<q\leq n+1$. For all non-integral $s$ satisfying $0<s<d$, there exist two constants $C>0$ and $r_0>0$ such that for all $\bv\in\Sigma^\infty$ and $0<r\leq r_0$,
\begin{eqnarray*}
  &&\hspace{-1cm}\mathbb{E}\Big(\int \mu^\w(B(x,r))^{q-1} d\mu^\w(x)\Big) \\
  &\leq& Cr^{s(q-1)}\int\left[\int \ldots \int \psi^s(\bu_1,\ldots,\bu_n,\bv)^{-1} d\mu(\bu_1)\ldots d\mu(\bu_n)\right]^{(q-1)/n}d\mu(\bv).
\end{eqnarray*}
\end{pro}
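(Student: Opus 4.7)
The plan is to reduce the proposition to a multienergy bound on the random projections and then prove that bound by induction, using Lemma~\ref{lemE} as the single‑kernel step.

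First I will rewrite the left-hand side by push-forward, $\int \mu^\w(B(x,r))^{q-1}\,d\mu^\w(x)=\int\mu^\w(B(\Pi^\w(\bv),r))^{q-1}\,d\mu(\bv)$, and observe that
\begin{equation*}
\mu^\w(B(\Pi^\w(\bv),r))^n=\int\cdots\int\prod_{i=1}^n\chi_{\{|\Pi^\w(\bu_i)-\Pi^\w(\bv)|\leq r\}}\,d\mu(\bu_1)\cdots d\mu(\bu_n).
\end{equation*}
Since $1<q\leq n+1$ gives $(q-1)/n\in(0,1]$, the map $t\mapsto t^{(q-1)/n}$ is concave, so Jensen's inequality yields
\begin{equation*}
\mathbb{E}\bigl[\mu^\w(B(\Pi^\w(\bv),r))^{q-1}\bigr]\leq\bigl(\mathbb{E}\bigl[\mu^\w(B(\Pi^\w(\bv),r))^n\bigr]\bigr)^{(q-1)/n}.
\end{equation*}

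Next, using the elementary inequality $\chi_{\{|y|\leq r\}}\leq r^s|y|^{-s}$ together with Fubini, the $n$th moment is bounded by $r^{ns}$ times the $n$-fold integral of $\mathbb{E}\bigl[\prod_{i=1}^n|\Pi^\w(\bu_i)-\Pi^\w(\bv)|^{-s}\bigr]$. Raising to the $(q-1)/n$-power, integrating against $d\mu(\bv)$, and exchanging $\mathbb{E}$ with the outer integral (Fubini for nonnegative integrands) reduces the proposition to the multienergy estimate
\begin{equation*}
\mathbb{E}\Bigl[\prod_{i=1}^n|\Pi^\w(\bu_i)-\Pi^\w(\bv)|^{-s}\Bigr]\leq C\,\psi^s(\bu_1,\ldots,\bu_n,\bv)^{-1},
\end{equation*}
uniformly in the tuples, with $C$ depending only on $s,d,n$ and the sup-norms of the densities of the $P_\bu$.

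I would prove the multienergy estimate by induction on $n$. The base case $n=1$ is exactly Lemma~\ref{lemE}. For the inductive step, relabel so that $\bu_n$ is a leaf of the spanning tree of $\{\bu_1,\ldots,\bu_n,\bv\}$ whose attachment vertex $\bw$---the deepest node on the path to $\bu_n$ shared with some other leaf---is as deep as possible, and set $k=|\bw|$. Condition on the sigma-field $\mathcal{F}$ generated by all displacements except the single $\w_{\bu_n|k+1}$. By maximality of $|\bw|$, no other leaf has $\bu_n|k+1$ as a prefix, hence every factor $|\Pi^\w(\bu_i)-\Pi^\w(\bv)|^{-s}$ with $i<n$, as well as $\Pi^\w(\bv)$ itself, is $\mathcal{F}$-measurable and factors out of the conditional expectation. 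The conditional expectation of the remaining factor is handled by the proof of Lemma~\ref{lemE}: integrating the bounded density of $\w_{\bu_n|k+1}$ against $|T_{\bu_n|k}\w_{\bu_n|k+1}+Z|^{-s}$, where $Z$ is $\mathcal{F}$-measurable, yields $C/\psi^s(T_{\bu_n|k})=C/\psi^s(T_\bw)$. Since removing $\bu_n$ strips exactly the contribution $\bw$ from the join set, one has $\psi^s(\bu_1,\ldots,\bu_n,\bv)=\psi^s(T_\bw)\,\psi^s(\bu_1,\ldots,\bu_{n-1},\bv)$, and the inductive hypothesis on $\{\bu_1,\ldots,\bu_{n-1},\bv\}$ completes the induction.

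The hard part is the last step, because Lemma~\ref{lemE} is stated with exclusion level $k=|\bu_n\wedge\bv|$, whereas the induction forces $k=|\bw|$, which can be strictly larger when the deepest attachment of $\bu_n$ is through some other $\bu_m$ rather than $\bv$. I would therefore need the (mild) extension of Lemma~\ref{lemE} allowing an arbitrary exclusion level $k$, with the bounding matrix $T_{\bu_n|k}$; this follows from the same change-of-variables computation in $\w_{\bu_n|k+1}$ that proves Lemma~\ref{lemE}. Submultiplicativity of $\psi^s$ together with $\psi^s\leq 1$ then ensures that this $\psi^s(T_\bw)$-factor matches precisely the new branching vertex added to $\bigwedge(\bu_1,\ldots,\bu_{n-1},\bv)$ when $\bu_n$ is reinserted, so the induction telescopes correctly.
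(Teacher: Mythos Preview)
Your reduction to the multienergy estimate and the inductive peeling of leaves are both correct, and the extension of Lemma~\ref{lemE} that you flag (excluding $\w_{\bu_n|k+1}$ with $k=|\bw|\geq|\bu_n\wedge\bv|$, giving the bound $C/\psi^s(T_\bw)$) is genuine and follows from the same change-of-variables computation against a bounded density on a bounded domain.

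The paper's argument differs in one organisational point that lets it avoid that extension altogether. Rather than keeping the star factors $|\Pi^\w(\bu_i)-\Pi^\w(\bv)|^{-s}$, the paper first uses the triangle inequality to replace the event $\{|\Pi^\w(\bu_i)-\Pi^\w(\bv)|\leq r\ \forall i\}$ by $\{|\Pi^\w(\bu_i)-\Pi^\w(\bu_{i+1})|\leq 2r\ \forall i\}$, after renumbering $\bu_1,\ldots,\bu_n$ in a depth-first order from $\bv$ so that $\{\bu_1\wedge\bu_2,\ldots,\bu_{n-1}\wedge\bu_n,\bu_n\wedge\bv\}$ is exactly the join set. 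One then sets $\mathcal{F}_l=\sigma\{\w_\bu:\bu\neq\bu_1|k_1{+}1,\ldots,\bu_l|k_l{+}1\}$ with $k_l=|\bu_l\wedge\bu_{l+1}|$ and applies the tower property together with Lemma~\ref{lemE} $n$ times; in each step the exclusion level is precisely $|\bu_l\wedge\bu_{l+1}|$, so Lemma~\ref{lemE} applies verbatim. Your leaf-peeling induction with star factors is equally valid and perhaps more conceptually transparent, but the price is the need to re-open the proof of Lemma~\ref{lemE}; the paper's chain-plus-renumbering trick sidesteps that at the cost of the triangle-inequality step and the combinatorial renumbering claim.
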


\begin{proof}
We may renumber the points $\bu_1,\ldots,\bu_n$ in such a manner that $\{\bu_1\wedge\bu_2, \ldots, \bu_{n-1}\wedge\bu_n, \bu_n\wedge\bv\}$are precisely the points of the join set $\bigwedge(\bu_1,\ldots,\bu_n,\bv)$, including repeat points. This renumbering does not affect the value of $\psi^s(\bu_1,\ldots,\bu_n,\bv)$.

Since $n/(q-1)\geq 1$, by  Fubini's theorem and Jensen's inequality, we have that
\begin{eqnarray}\label{E_intel}
\mathbb{E} \Big(\int \mu^\w(B(x,r))^{q-1}d\mu^\w(x)\Big) &=& \mathbb{E}\Big(\int \mu^\w(B(\Pi^\w(\bv),r))^{q-1} d\mu(\bv)\Big)  \nonumber \\
&\leq&\int \Big[\mathbb{E} \Big(\mu^\w(B(\Pi^\w(\bv),r))^n\Big)\Big]^{(q-1)/n} d\mu(\bv).
\end{eqnarray}
The key is to estimate the expectation $\mathbb{E} (\mu^\w(B(\Pi^\w(\bv),r))^n)$. Again by Fubini's theorem, we have that
\begin{eqnarray}\label{leqP}
&&\hspace{1cm}\mathbb{E}\Big(\mu^\w(B(\Pi^\w(\bv),r))^n\Big)
= \mathbb{E}\Big(\mu\{\bu:|\Pi^\w(\bu)-\Pi^\w(\bv)|\leq r\}^n\Big) \\
&&=\!\! (\mathbf{P}\times \mu \times \ldots \times \mu)\{(\w,\bu_1,\ldots,\bu_n): |\Pi^\w(\bu_1)-\Pi^\w(\bv)|\leq r, \ldots,|\Pi^\w(\bu_n)-\Pi^\w(\bv)|\leq r\} \nonumber\\
&&= \int\ldots\int \mathbf{P}\{\w:|\Pi^\w(\bu_1)-\Pi^\w(\bv)|\leq r,\ldots,|\Pi^\w(\bu_n)-\Pi^\w(\bv)|\leq r\} d\mu(\bu_1)\ldots d\mu(\bu_n) \nonumber\\
&&\leq \int\ldots\int \mathbf{P}\{|\Pi^\w(\bu_1)-\Pi^\w(\bu_2)|\leq 2r,\ldots, |\Pi^\w(\bu_n)-\Pi^\w(\bv)|\leq 2r\} d\mu(\bu_1)\ldots d\mu(\bu_n) \nonumber\\
&&\leq 2^n r^{sn} \int\ldots\int \mathbb{E}(|\Pi^\w(\bu_1)-\Pi^\w(\bu_2)|^{-s}\ldots |\Pi^\w(\bu_n)-\Pi^\w(\bv)|^{-s}) d\mu(\bu_1)\ldots d\mu(\bu_n).     \nonumber
\end{eqnarray}

We  define a sequence
of $\sigma$-algebras $\mathcal{F}_1 \supset \mathcal{F}_2 \supset \ldots \supset \mathcal{F}_n$ by
$$
\mathcal{F}_l=\sigma\{\w_\bu:\bu\neq \bu_1|{k_1+1},\ldots \bu_l|{k_l+1}\},
$$
where $k_l=|\bu_l\wedge \bu_{l+1}|$ for $l=1,2,\ldots, n-1$ and $k_n=|\bu_n\wedge \bv|$. We write
$$
X_l^\w=|\Pi^\w(\bu_l)-\Pi^\w(\bu_{l+1})|^{-s},
$$
 for $l=1,\ldots,n-1$ and
$$
X_n^\w=|\Pi^\w(\bu_n)-\Pi^\w(\bv)|^{-s},
$$
and it is clear that  $X_{l+1}^\w,\ldots,X_n^\w$ are all $\mathcal{F}_l$-measurable.  Applying  the tower property of conditional expectation and Lemma~\ref{lemE} $n$ times, we obtain that
\begin{eqnarray*}
\mathbb{E}(X_1^\w \ldots X_n^\w|\mathcal{F}_n)&=& \mathbb{E}(\mathbb{E}(X_1^\w \ldots X_n^\w|\mathcal{F}_1)|\mathcal{F}_n)\\
&=& \mathbb{E}(\mathbb{E}(X_1^\w|\mathcal{F}_1)X_2^\w \ldots X_n^\w|\mathcal{F}_n)\\
&\leq&\mathbb{E}(C_1\psi^s(T_{\bu_1\wedge\bu_2})^{-1} X_2^\w \ldots X_n^\w|\mathcal{F}_n)\\
&=&C_1\psi^s(T_{\bu_1\wedge\bu_2})^{-1} \mathbb{E}(X_2^\w \ldots X_n^\w|\mathcal{F}_n)\\
&\leq& C_1^n \psi^s(T_{\bu_1\wedge\bu_2})^{-1}\ldots \psi^s(T_{\bu_{n-1}\wedge\bu_n})^{-1} \psi^s(T_{\bu_n\wedge\bv})^{-1} \\
&=& C_1^n \psi^s(\bu_1,\ldots,\bu_n,\bv)^{-1}.
\end{eqnarray*}

Again by the tower property of conditional expectation, it follows that
\begin{eqnarray*}
\mathbb{E}(|\Pi^\w(\bu_1)-\Pi^\w(\bu_2)|^{-s}\ldots |\Pi^\w(\bu_n)-\Pi^\w(\bv)|^{-s})
& =& \mathbb{E}(\mathbb{E}(X_1^\w \ldots X_n^\w|\mathcal{F}_n)|\mathcal{F}_n) \\
& \leq& C_1^n \psi^s(\bu_1,\ldots,\bu_n,\bv)^{-1}.
\end{eqnarray*}
Combining this with ~\eqref{leqP}, we obtain that
\begin{equation}\label{lemEphi}
  \mathbb{E}\big(\mu^\w(B(\Pi^\w(\bv),r)^n\big)\leq C_2r^{sn}\int \ldots \int \psi^s(\bu_1,\ldots,\bu_n,\bv)^{-1} d\mu(\bu_1)\ldots d\mu(\bu_n),
\end{equation}
where $C_2$ is a constant.
Combining \eqref{E_intel} and ~\eqref{lemEphi} together,  the conclusion holds.
\end{proof}

We identify the $\Sigma^*$ with the vertices of a rooted tree with root $\emptyset$, in the obvious way. The edges of this tree join each vertex $\bu\in \Sigma^k$ to its $n_{k+1}$ `children' $\bu 1, \ldots, \bu n_{k+1}$. It is natural to think of the join points in $\bigwedge(\bu_1, \ldots, \bu_n)$ as vertices of the tree $\Sigma^*$ where the paths from $\emptyset$ to the $\bu_i \wedge \bu_j$ meet.

A join set $\bigwedge(\bu_1, \ldots, \bu_n)$ with root $\bw\in\Sigma^*$ consists of a family of vertices $\{\bv_1, \ldots, \bv_r\}$ of $\Sigma^*$, with repetitions allowed, such that $\bv_j \succeq \bw$ for all $j$ and with the property that $\bv_i \wedge \bv_j \in \bigwedge(\bv_1, \ldots, \bv_n)$ for all $\bv_i,\bv_j \in \bigwedge(\bu_1, \ldots, \bu_n)$. The root may or may not be a vertex of the join set. The number $r+1$ is called the {\it spread} of the join set. The multiplicity of a given vertex is the number of times it occurs in $\bigwedge(\bu_1, \ldots, \bu_n)$.

Given $\bu_1, \ldots, \bu_n \in  \Sigma^\infty$, let $\bjj=\bigwedge(\bu_1, \ldots, \bu_n)$ be a join set of $\bu_1, \ldots, \bu_n$ with root  $\bw\in\Sigma^*$. A join class $\mathcal{J}$  with root $\bw \in \Sigma^*$ is an equivalence class of join sets all with root $\bw$, two such join sets $\bjj$ and $\bjj'$ being equivalent if there is an automorphism of the rooted subtree of $\Sigma^*$ with root $\bw$ that maps $\bjj$ onto $\bjj'$(preserving multiplicities). The spread of a join class $\mathcal{J}$ is the common value of the spreads of all $\bjj \in \mathcal{J}$ .

Recall that the level of a vertex $\bw \in \Sigma^*$ is just $|\bw|$. Given a joint set $\bjj = \{\bv_1,\ldots,\bv_r\}$, we define the set of levels $L(\bjj)$ of $\bjj$ by $\{|\bv_1|, \ldots, |\bv_r|\}$, allowing repetitions, and we define the set of levels $L(\mathcal{J})$ of the join class $\mathcal{J}$ is the common set of levels of the join sets in the class.

Note that in the rest of the paper, the product is always over the set of levels in a join class. The symbol $[n-1]$ above the product sign means that there are $n-1$ terms in this product.

\begin{pro}
Given a real $q>1$. Let $n$ be the integer such that $q-1 \leq n < q$. Let $\mathcal{J}$ be a join class with root $\bw$ and spread $n$. Then
\begin{eqnarray}
&& \hspace{-2cm}\int_{\bigwedge(\bu_1,\ldots,\bu_n)\in\mathcal{J}} \psi^s(\bu_1,\ldots,\bu_n)^{-1} d\mu(\bu_1)\ldots d\mu(\bu_n) \nonumber\\
&\leq& \mu(\mathcal{C}_\bw)^{(q-n)/(q-1)} \prod_{l\in L(\mathcal{J})}^{[n-1]}\Big(\sum_{\substack{|\bw'|=l \\ \bw'\succeq \bw}} \psi^s(T_{\bw'})^{1-q} \mu(\mathcal{C}_{\bw'})^q\Big)^{1/(q-1)}. \label{eq_int1}
\end{eqnarray}
\end{pro}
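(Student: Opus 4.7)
I will reduce the integral to a weighted combinatorial sum using the tree structure of $\mathcal{J}$, and then bound that sum by iterated applications of H\"older's inequality in which one copy of the target factor $\bigl(\sum_{|\bw'|=l,\bw'\succeq\bw}\psi^s(T_{\bw'})^{1-q}\mu(\mathcal{C}_{\bw'})^q\bigr)^{1/(q-1)}$ is extracted at each step, with the H\"older conjugate exponent shrinking by one.

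\medskip

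\noindent\textbf{Reduction to a sum over realisations.} First I would fix a combinatorial rooted tree $\mathcal{T}$ representing the class $\mathcal{J}$: it has root $\bw$, carries $n-1$ internal vertices $\bv_1,\ldots,\bv_{n-1}$ (with repetitions when several join points coincide) at the levels of $L(\mathcal{J})$, and has $n$ leaves, one per $\bu_i$. Summing over all concrete realisations in $\Sigma^*$ (assignments of actual vertices of $\Sigma^*$ to the internal vertices respecting the prescribed levels and ancestor relations) and integrating each $\bu_i$ over its compatible sibling cylinder via $\sum_c\mu(\mathcal{C}_{\bv c})\le\mu(\mathcal{C}_\bv)$ gives
\[
\int_{\bigwedge\in\mathcal{J}}\psi^s(\bu_1,\ldots,\bu_n)^{-1}\,d\mu(\bu_1)\cdots d\mu(\bu_n) \;\le\; \sum\Bigl(\prod_{j=1}^{n-1}\psi^s(T_{\bv_j})^{-1}\Bigr)\prod_j\mu(\mathcal{C}_{\bv_j})^{\ell_j},
\]
where the outer sum ranges over realisations and $\ell_j$ is the number of leaves of $\mathcal{T}$ whose deepest internal ancestor is $\bv_j$, so $\sum_j \ell_j=n$.

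\medskip

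\noindent\textbf{Iterated H\"older.} I would then process the tree from deepest level to shallowest, nesting the sums by the ancestor relations so that at each step the running sum takes the shape $\sum_\bv\bigl(\psi^s(T_\bv)^{1-q}\mu(\mathcal{C}_\bv)^q\,\Phi(\bv)\bigr)^{1/(q-k)}\mu(\mathcal{C}_\bv)^{\alpha_k}$, where $\Phi(\bv)$ collects the contributions of already-processed deeper levels and $k$ is the running H\"older index. Applying H\"older with conjugate pair $\bigl(q-k,(q-k)/(q-k-1)\bigr)$ extracts $\bigl(\sum\psi^s(T_\bv)^{1-q}\mu(\mathcal{C}_\bv)^q\Phi(\bv)\bigr)^{1/(q-k)}$ and leaves a residual of the same shape with H\"older index decreased by one. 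After the final step the residual collapses via $\sum_\bv\mu(\mathcal{C}_\bv)\le\mu(\mathcal{C}_\bw)$, and relaxing any ancestor constraint $\bw'\succeq\bv_0$ to $\bw'\succeq\bw$ in the extracted factors is a harmless over-count. Negative residual $\mu$-exponents, which appear whenever a join point has only one leaf branching off directly (as at the top vertex of a caterpillar subtree), are absorbed into positive exponents lower down using the monotonicity $\mu(\mathcal{C}_{\bv'})\le\mu(\mathcal{C}_\bv)$ for $\bv'\succeq\bv$.

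\medskip

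\noindent\textbf{Accounting and main obstacle.} Tracking residual $\mu$-powers through the induction shows that the total accumulated exponent of $\mu(\mathcal{C}_\bw)$ equals $n-(n-1)q/(q-1) = (q-n)/(q-1)$, matching the stated prefactor and nonnegative since $q>n$; validity of all the conjugate exponents $(q-k)/(q-k-1)$ for $k=1,\ldots,n-1$ is exactly the condition $q>n$, which is the standing hypothesis $q-1\le n<q$. The principal technical obstacle is handling a join vertex of multiplicity $m>1$ (where $m+1$ leaves coincide and the integrand carries $\psi^s(T_\bv)^{-m}\mu(\mathcal{C}_\bv)^{m+1}$ rather than $\psi^s(T_\bv)^{-1}\mu(\mathcal{C}_\bv)^2$): at such a vertex one must apply H\"older $m$ consecutive times with shrinking exponents $q-1,q-2,\ldots,q-m$ in order to extract the $m$ required copies of the target factor, which is the mechanism that drives the shrinking-exponent pattern throughout the induction and that one first sees explicitly in the trivalent $n=3$ configuration.
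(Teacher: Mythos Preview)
Your overall strategy---reduce the integral to a combinatorial sum over realisations of the tree $\mathcal{T}$ and then peel off the target factors by repeated H\"older---is reasonable and is essentially what the paper does as well. But the specific mechanism you describe is internally inconsistent, and this is where the argument breaks.

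You claim that at step $k$ one applies H\"older with the conjugate pair $(q-k,(q-k)/(q-k-1))$, extracting a factor $(\sum\psi^s(T_\bv)^{1-q}\mu(\mathcal{C}_\bv)^q\Phi(\bv))^{1/(q-k)}$. But the target bound has every factor raised to the power $1/(q-1)$, not $1/(q-k)$; your ``shrinking exponents'' produce the wrong powers. Moreover, your own accounting paragraph tacitly assumes the correct fixed exponent: the identity $n-(n-1)q/(q-1)=(q-n)/(q-1)$ is exactly what one gets if each of the $n-1$ extracted factors carries exponent $1/(q-1)$ and hence absorbs $q/(q-1)$ of the total $\mu$-weight. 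So the bookkeeping you quote contradicts the H\"older scheme you propose. Concretely, at a vertex of multiplicity $m$ one does \emph{not} need $m$ successive applications with exponents $q-1,\ldots,q-m$: the single algebraic identity
\[
\psi^s(T_\bv)^{-m}\mu(\mathcal{C}_\bv)^{m+1}=\bigl(\psi^s(T_\bv)^{1-q}\mu(\mathcal{C}_\bv)^{q}\bigr)^{m/(q-1)}\,\mu(\mathcal{C}_\bv)^{(q-m-1)/(q-1)}
\]
followed by one H\"older with the pair $\bigl(\tfrac{q-1}{m},\tfrac{q-1}{q-m-1}\bigr)$ already gives the $m$ required copies of the target factor. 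The ``shrinking exponents'' are a red herring.

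The paper's proof avoids this altogether by inducting not on levels but on the number of \emph{distinct} vertices of $\mathcal{J}$, working from the top of the tree down. At the top vertex the integral factorises over the independent sub-join-classes $\mathcal{J}_1,\ldots,\mathcal{J}_r$ hanging below it; one applies the inductive hypothesis to each $\mathcal{J}_i$ (getting the same exponent $1/(q-1)$ on every extracted factor and a residual $\mu(\mathcal{C}_\bw)^{(q-n_i)/(q-1)}$), multiplies, and checks the arithmetic. When the root is not itself a join vertex one then sums over the position of the top vertex and applies a single H\"older with exponents $\tfrac{q-1}{q-n}$ and $\tfrac{q-1}{n-1}$. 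The key structural point you are missing is this top-down factorisation across subtrees: processing ``deepest to shallowest'' obscures it, because deep vertices in different subtrees are coupled only through their common ancestors, and one cannot cleanly extract a global level-$l$ factor without first separating the subtrees.
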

\begin{proof}
We write
$$
I_n=\int_{\bigwedge(\bu_1,\ldots,\bu_n)\in\mathcal{J}} \psi^s(\bu_1,\ldots,\bu_n)^{-1} d\mu(\bu_1)\ldots d\mu(\bu_n)
$$
For  $n=1$, it implies that $1<q \leq 2$, and   we have that
$$
I_1= \int_{\bu_1 \succeq \bw} \psi^s(\bu_1) d\mu(\bu_1) = \int_{\bu_1 \succeq \bw} d\mu(\bu_1) =  \mu(\mathcal{C}_{\bw}).
$$
Hence the inequality~\eqref{eq_int1} holds for $n=1$.

We proceed by induction on the number of distinct vertices of $\mathcal{J}$. To start the inductive
process, suppose that the join sets in $\mathcal{J}$ consist of a single vertex $\bw$ of multiplicity $n-1$ for some $n \geq 2$. First assume that $\bw$ is itself the root of the join sets of $\mathcal{J}$. Then
\begin{eqnarray}
I_n &=& \int \int_{\bigwedge(\bu_1,\ldots,\bu_n)\in\mathcal{J}} \psi^s(T_\bw)^{-(n-1)} d\mu(\bu_1)\ldots d\mu(\bu_n) \nonumber\\
&\leq& \psi^s(T_\bw)^{-(n-1)}\mu(\mathcal{C}_\bw)^n \nonumber\\
&=& \mu(\mathcal{C}_\bw)^{(q-n)(q-1)} \big(\psi^s(T_\bw)^{1-q}\mu(\mathcal{C}_\bw)^q\big)^{(n-1)/(q-1)}.
\end{eqnarray}
Since $L(\mathcal{J})$ has just one level $|\bw|$ which is of multiplicity $n-1$, the conclusion follows immediately.

Next, suppose that the join class $\mathcal{J}$ has root $\bh$ and contains join sets $\mathcal{J}$ consisting of a single vertex $\bw$, distinct from $\bh$, of multiplicity $n-1$ at level $l$.

\begin{eqnarray}
I_n &\leq& \sum_{\substack{|\bw|=l \\ \bw\succeq \bh}} \mu(\mathcal{C}_\bw)^{(q-n)(q-1)} (\psi^s(T_\bw)^{1-q}\mu(\mathcal{C}_\bw)^q)^{1/(q-1)} \nonumber\\
&\leq& \Big(\sum_{\substack{|\bw|=l \\ \bw\succeq \bh}} \mu(\mathcal{C}_\bw) \Big)^{(q-n)(q-1)} \Big(\sum_{\substack{|\bw|=l \\ \bw\succeq \bh}} \psi^s(T_\bw)^{1-q}\mu(\mathcal{C}_\bw)^q \Big)^{(n-1)/(q-1)} \nonumber\\
&=& \mu(\mathcal{C}_\bh)^{(q-n)(q-1)} \Big(\sum_{\substack{|\bw|=l \\ \bw\succeq \bh}} \psi^s(T_\bw)^{1-q}\mu(\mathcal{C}_\bw)^q \Big)^{(n-1)/(q-1)}.
\end{eqnarray}

We now assume inductively that ~\eqref{eq_int1} holds for all join sets with fewer than $k$ distinct vertices for some $k \geq 2$. Let $\mathcal{J}$ be a join class with root $\bw$ and $k \geq 2$ distinct vertices and spread $n$ where $n \leq q$. Again, first consider the case where the root $\bw$ belongs to the join sets in $\mathcal{J}$ as the `top' vertex. In each join set $\bjj \in  \mathcal{J}$ there is a (possibly empty) set of $r \geq 0$ vertices $\{\bw_1, \ldots, \bw_r\}$ in $\mathcal{J}$ distinct from and `immediately below' $\bj$, that is with the path joining $\bw_i$ to $\bw$ in the tree $I$ containing no other vertices of $\bjj$. For a given class J these sets of vertices (with
multiplicity) are equivalent under automorphisms of the tree that fix the root $\bw$.

For each $i$, the join set $\bjj \in \mathcal{J}$ induces a join set that we denote by $\bjj_i$ with root $\bj$ and vertices $\{\bh \in \mathcal{J} : \bh \succeq \bw_i\}$, that is the vertices of $J$ below and including $\bw_i$. These join sets are
equivalent under automorphisms of the tree that fix the root $\bw$, and we write $\mathcal{J}_i = \{\bjj_i : \bjj \in \mathcal{J}\}$ for this equivalence class of join sets, which has spread $n_i \geq 2$, say, and set of levels $L_i$(counted with repetitions).

Let $  n_1+\ldots+n_r+t=n,$ $t\geq 0$. To integrate over $\{\bu_1, \ldots, \bu_n\}$ such that $\bigwedge(\bu_1, \ldots , \bu_n) \in \mathcal{J}$, we decompose the integral so that for each $\bjj \in \mathcal{J}$, for each $i (1 \leq i \leq r)$, the $n_i$ integration variables, $\{\bu_1^i, \ldots, \bu_{n_i}^i\}$ say, are such that
$\bigwedge(\bu_1^i, \ldots, \bu_{n_i}^i)=\bjj_i$ and $t$ of them, $\{\bu_1^0, \ldots, \bu_l^0\}$, such that $\bu_l^0 \wedge \bu_k = \bw$ for all $\bu_l^0 \neq \bu_k$. Thus, noting that the multiplicity of $\bw$ is $r+t-1$ and applying the inductive assumption~\eqref{eq_int1} to join sets in $\mathcal{J}_i$ for each $i$,
\begin{eqnarray*}
&& \hspace{-1cm} I_n  \leq \psi^s(T_\bw)^{-(r+t-1)}\mu(\mathcal{C}_\bw)^t \int_{\bigwedge(\bu_1^1,\ldots,\bu_n^1)\in\mathcal{J}_1} \psi^s(\bu_1^1,\ldots,\bu_n^1)^{-1} d\mu(\bu_1^1)\ldots d\mu(\bu_{n_1}^1) \\
&& \hspace{3cm}\times \ldots \times \int_{\bigwedge(\bu_1^r,\ldots,\bu_n^r)\in\mathcal{J}_r} \psi^s(\bu_1^r,\ldots,\bu_n^r)^{-1} d\mu(\bu_1^r)\ldots d\mu(\bu_{n_1}^r) \\
&\leq& \psi^s(T_\bw)^{1-r-t}\mu(\mathcal{C}_\bw)^t\mu(\mathcal{C}_\bw)^{(q-n_1)(q-1)} \prod_{l\in L_1}^{[n_1-1]}\Big(\sum_{\substack{|\bw'|=l \\ \bw'\succeq \bw}} \psi^s(T_{\bw'})^{1-q}\mu(\mathcal{C}_{\bw'})^q\Big)^{1/(q-1)} \\
&& \hspace{3cm}\times \ldots \times \mu(\mathcal{C}_\bw)^{(q-n_r)(q-1)} \prod_{l\in L_r}^{[n_r-1]}\Big(\sum_{\substack{|\bw'|=l \\ \bw'\succeq \bw}} \psi^s(T_{\bw'})^{1-q} \mu(\mathcal{C}_{\bw'})^q\Big)^{1/(q-1)}\\
&\leq& \mu(\mathcal{C}_\bw)^{\frac{q-n_1-\ldots-n_r-t}{q-1}} (\psi^s(T_\bw)^{(1-q)}\mu(\mathcal{C}_\bw))^{\frac{r+t-1}{q-1}}  \prod_{l\in L_1\cup\ldots\cup L_r}^{n_1+\ldots+n_r-r} \Big(\sum_{\substack{|\bw'|=l \\ \bw'\succeq \bw}} \psi^s(T_{\bw'})^{1-q}\mu(\mathcal{C}_{\bw'})^q\Big)^{1/(q-1)} \\
&\leq& \mu(\mathcal{C}_\bw)^{(q-n)(q-1)} \prod_{l\in L(\mathcal{J})}^{[n-1]}\Big(\sum_{\substack{|\bw'|=l \\ \bw'\succeq \bw}} \psi^s(T_{\bw'})^{1-q}\mu(\mathcal{C}_{\bw'})^q\Big)^{1/(q-1)}.
\end{eqnarray*}
Hence the inequality~\eqref{eq_int1} holds for  the root $\bw$ is a vertex of the join sets in $\mathcal{J}$.

Finally, if the root $\bh$ is not a vertex of the join sets in $\mathcal{J}$, then summing ~\eqref{eq_int1} over join sets with top vertex $\bw \succeq \bh$ with $|\bw| = l'$ and using H\"{o}lder's inequality,
\begin{eqnarray*}
I_n  &\leq& \sum_{\substack{|\bw|=l'\\\bw \succeq \bh}} \mu(\mathcal{C}_\bw)^{(q-n)(q-1)} \prod_{l\in L(\mathcal{J})}^{[n-1]}\Big(\sum_{\substack{|\bw'|=l \\ \bw'\succeq \bw}} \psi^s(T_{\bw'})^{1-q}\mu(\mathcal{C}_{\bw'})^q\Big)^{1/(q-1)} \\
&\leq& \Big(\sum_{\substack{|\bw|=l'\\ \bw \succeq \bh}} \mu(\mathcal{C}_\bw)\Big)^{(q-n)(q-1)} \prod_{l\in L(\mathcal{J})}^{[n-1]}\Big(\sum_{\substack{|\bw|=l'\\ \bw \succeq \bh}} \sum_{\substack{|\bw'|=l \\ \bw'\succeq \bw}} \psi^s(T_{\bw'})^{1-q}\mu(\mathcal{C}_{\bw'})^q\Big)^{1/(q-1)}\\
&=& \mu(\mathcal{C}_\bh)^{(q-n)(q-1)} \prod_{l\in L(\mathcal{J})}^{[n-1]}\Big(\sum_{\substack{|\bw'|=l \\ \bw'\succeq \bh}} \psi^s(T_{\bw'})^{1-q}\mu(\mathcal{C}_{\bw'})^q\Big)^{1/(q-1)},
\end{eqnarray*}
and we complete the induction and the proof.
\end{proof}

Let $0 \leq k_1 < k_2 < \ldots < k_p$ be levels, where $1 \leq p \leq n$. For $\bv \in \Sigma^\infty$,  write $\bv_r = \bv|k_r$. For each $r = 1, \ldots, p$, let $\mathcal{J}_r$ be a given join class with root at level $k_r$ and spread $m_r$.
Write $\mathcal{J}_r(\bv_r)$ for the corresponding join class with the root mapped to $\bv_r$, that is so that the tree automorphisms of $\Sigma^*$ that map $\bv_r$ to $\bv_r'$ map the join sets in $\mathcal{J}_r(\bv_r)$ onto those in $\mathcal{J}_r'(\bv_r)$ in a bijective manner.

We need to include the cases where $\mathcal{J}_r(\bv_r)$ is a join class with root $\mathbf{v}_r$ and of spread 1. In this case we interpret integration over $\bigwedge(\bu_l) \in \mathcal{J}_r(\bv_r)$ as integration over all $\bu_l$ such that
$\bu_l \wedge \bv = \bv_r$, and we take $\psi^s(\bu_l) = 1$ where this occurs in the next proof.

\begin{pro}\label{pro_intest}
Let $q>1$ and let $n$ be an integer with $q-1\leq n <q$. With notation as above,
for each $r = 1, \ldots, p$ let $\mathcal{J}_r$ be a join class with root at level $k_r$ and spread $m_r \geq 1$, with $m_1 + \ldots + m_p = n$. Then
\begin{eqnarray}
&&\hspace{-2cm}\int_{\Sigma^\infty} \bigg[\int_{\bigwedge(\bu_1,\ldots,\bu_{m_1})\in \mathcal{J}_1(\bv_1)}\ldots \nonumber\\
&&\int_{\bigwedge(\bu_{n-m_p+1},\ldots,\bu_n)\in \mathcal{J}_p(\bv_p)} \psi^s(\bu_1,\ldots,\bu_n,\bv)^{-1} d\mu(\bu_1)\ldots d\mu(\bu_n)\bigg]^{(q-1)/n} d\mu(\bv) \nonumber\\
&\leq& \prod_{l\in L}^{[n]}\Big(\sum_{|\bw|=l} \psi^s(T_\bw)^{1-q}\mu(\mathcal{C}_\bw)^q \Big)^{1/n},
\end{eqnarray}
where $L$ denotes the aggregate set of levels of $\{L(\mathcal{J}_1),\ldots,L(\mathcal{J}_p), k_1,\ldots,k_p\}$.
\end{pro}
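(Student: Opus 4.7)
The plan is to reduce the iterated integral to a product of factors bounded by the preceding proposition, and then to decouple the resulting quantities via a generalized H\"older inequality.

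First I would factor the multienergy kernel according to the structure of $\bigwedge(\bu_1,\ldots,\bu_n,\bv)$. Grouping the $\bu_i$ into $p$ blocks, where the $r$-th block consists of the $m_r$ variables whose join set lies in $\mathcal{J}_r(\bv_r)$, the join vertices split into the internal joins within each block, giving $\sum_r(m_r-1)=n-p$ vertices, and the $p$ branching points $\bv_1,\ldots,\bv_p$ at which each block departs from the $\bv$-path, each of multiplicity one. This yields the factorization
\[
\psi^s(\bu_1,\ldots,\bu_n,\bv)^{-1}=\prod_{r=1}^p \psi^s(T_{\bv_r})^{-1}\prod_{r=1}^p\psi^s(\bu^{(r)}_1,\ldots,\bu^{(r)}_{m_r})^{-1}.
\]
Applying the preceding proposition to each inner integral with spread $m_r$ and root $\bv_r$, and then raising the product to the power $(q-1)/n$, produces the integrand
\[
F(\bv):=\prod_{r=1}^p \psi^s(T_{\bv_r})^{-(q-1)/n}\mu(\mathcal{C}_{\bv_r})^{(q-m_r)/n}\prod_{r=1}^p\prod_{l\in L(\mathcal{J}_r)}^{[m_r-1]} B_r(l)^{1/n},
\]
where $B_r(l):=\sum_{|\bw|=l,\,\bw\succeq\bv_r}\psi^s(T_\bw)^{1-q}\mu(\mathcal{C}_\bw)^q$.

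The crux is to display $F(\bv)$ as a product of $n$ positive factors $f_l(\bv)^{1/n}$ whose $d\mu(\bv)$-integrals are the required sums. For each $l\in L$ I would set
\[
f_l(\bv)=\begin{cases}\psi^s(T_{\bv_r})^{1-q}\mu(\mathcal{C}_{\bv_r})^{q-1} & \text{if } l=k_r,\\ B_r(l)/\mu(\mathcal{C}_{\bv_r}) & \text{if } l\in L(\mathcal{J}_r).\end{cases}
\]
A direct check shows $F(\bv)=\prod_{l\in L}^{[n]} f_l(\bv)^{1/n}$: the $m_r-1$ inner factors contribute $\mu(\mathcal{C}_{\bv_r})^{-(m_r-1)/n}$, which combined with the $\mu(\mathcal{C}_{\bv_r})^{(q-1)/n}$ from the upper factor yields precisely $\mu(\mathcal{C}_{\bv_r})^{(q-m_r)/n}$. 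For each $l$, integration against $d\mu(\bv)$ at level $k_r$ introduces a weight $\mu(\mathcal{C}_{\bv_r})$ which cancels the $\mu(\mathcal{C}_{\bv_r})^{-1}$ in $f_l$, leaving $\int f_l\,d\mu(\bv)=\sum_{|\bw|=l}\psi^s(T_\bw)^{1-q}\mu(\mathcal{C}_\bw)^q$.

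Finally, since $L$ consists of exactly $n$ levels with exponents $1/n$ summing to $1$, applying the generalized H\"older inequality $\int\prod_l f_l^{1/n}d\mu\leq\prod_l(\int f_l\,d\mu)^{1/n}$ produces the claimed bound. The main subtlety is identifying the correct auxiliary functions $f_l$ so that both the algebraic identity $F(\bv)=\prod_l f_l^{1/n}$ and the integral identities $\int f_l\,d\mu=\sum_{|\bw|=l}\psi^s(T_\bw)^{1-q}\mu(\mathcal{C}_\bw)^q$ hold simultaneously; once the balance of $\mu(\mathcal{C}_{\bv_r})$ exponents is set, H\"older cleanly separates the levels of $L$.
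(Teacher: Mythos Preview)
Your argument is correct and is a genuinely streamlined variant of the paper's proof. Both proofs begin the same way: factor $\psi^s(\bu_1,\ldots,\bu_n,\bv)^{-1}$ according to the block structure along $\bv$ and apply the preceding proposition (the bound \eqref{eq_int1}) to each block to obtain the integrand $F(\bv)$. From this point the routes diverge. The paper sets up a downward induction on $r=p,p-1,\ldots,1$, defining partial integrals $I_r$ over $\{\bv:\bv\succeq\bv_r\}$ and at each step using H\"older's inequality in the form $\sum_{\bv_k\succeq\bv_{k-1}}a_{\bv_k}^{(n-n_k)/n}b_{\bv_k}^{n_k/n}\leq(\sum a)^{(n-n_k)/n}(\sum b)^{n_k/n}$ to absorb one more layer of the hierarchy $k_1<\cdots<k_p$. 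You instead exhibit $F(\bv)$ as a product $\prod_{l\in L}^{[n]} f_l(\bv)^{1/n}$ of $n$ factors, verify the exponent bookkeeping so that each $\int f_l\,d\mu(\bv)$ collapses to $\sum_{|\bw|=l}\psi^s(T_\bw)^{1-q}\mu(\mathcal{C}_\bw)^q$, and finish with a single $n$-fold H\"older inequality over $\Sigma^\infty$.

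What your approach buys is economy: no induction, and the nested constraint $k_1<\cdots<k_p$ plays no role beyond labelling the slots. What the paper's approach buys is transparency about how the restriction $\bw\succeq\bv_r$ is progressively relaxed; its inductive hypothesis also tracks the partial quantities $I_r$, which can be useful if one wants intermediate estimates. A small point worth stating explicitly in your write-up: when the same numerical level $l$ occurs in several slots of the multiset $L$ (say $l\in L(\mathcal{J}_r)$ and $l'=k_{r'}$ coincide), the corresponding functions $f_l$ may differ, but their $\mu$-integrals agree and depend only on the value of $l$, so the final product is unambiguous.
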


\begin{proof}
For each $r = 1, \ldots, p$,  we write
\begin{eqnarray*}
&&\hspace{-0.8cm} I_r = \int_{\bv\succeq \bv_r}\bigg[\int_{\bigwedge(\bu_1^r,\ldots,\bu_{m_r}^r) \in \mathcal{J}_r(\bv_r)} \ldots \int_{\bigwedge(\bu_1^p,\ldots,\bu_{m_p}^p) \in \mathcal{J}_p(\bv_p)} \psi^s(\bu_1^r,\ldots,\bu_{m_r}^r)^{-1} \psi^s(T_{\bv_r})^{-1} \times \ldots \\
&& \times \psi^s(\bu_1^p,\ldots,\bu_{m_p}^p)^{-1} \psi^s(T_{\bv_p})^{-1} d\mu(\bu_1^r) \ldots d\mu(\bu_{m_r}^r) \ldots d\mu(\bu_1^p) \ldots d\mu(\bu_{m_p}^p) \bigg]^{(q-1)/n}d\mu(\bv).
\end{eqnarray*}
We prove it by induction on $r$, starting with $r = p$ and working up to $r = 1$, taking as the inductive hypothesis:
For all $\bw_r \in \Sigma^{k_r}$,
\begin{equation}\label{hyp}
I_r \leq \mu(\mathcal{C}_{\bv_r})^{(n-n_r)/n} \prod_{l\in L_r}^{[n_r]}\Big(\sum_{|\bw|=l,\bw \succeq \bv_r} \psi^s(T_\bw)^{1-q}\mu(\mathcal{C}_\bw)^q \Big)^{1/n},
\end{equation}
where $n_r=m_r+\ldots+m_p$ and $L_r$ denotes the set of $\{L(\mathcal{J}_r),\ldots,L(\mathcal{J}_p), k_r,\ldots,k_p\}$ counted by multiplicity(so that $L_r$ consists of $m_r+\ldots+m_p=n_r$ levels).
To start the induction, we apply~\eqref{eq_int1} to get
\begin{eqnarray}
&&\hspace{-0.8cm} I_p \leq \int_{\bv \succeq \bv_p} \hspace{-0.2cm}\bigg[ \mu(\mathcal{C}_{\bv_p})^{(q-m_p)/(q-1)}\psi^s(T_{\bv_p})^{-1} \hspace{-0.3cm}\prod_{l\in L(\mathcal{J}_p(\bv_p))}^{[m_p-1]} \hspace{-0.2cm}\Big(\sum_{\substack{|\bw|=l\\ \bw\succeq \bv_p}} \psi^s(T_\bw)^{1-q}\mu(\mathcal{C}_\bw)^q\Big)^{1/(q-1)} \bigg]^{(q-1)/n} \hspace{-0.3cm}d\mu(\bv) \nonumber\\
&&= \mu(\mathcal{C}_{\bv_p})^{(n-m_p)/n} (\psi^s(T_{\bv_p})^{1-q}\mu(\mathcal{C}_{\bv_p})^q)^{1/n} \prod_{l\in L(\mathcal{J}_p(\bv_p))}^{[m_p-1]} \Big(\sum_{|\bw|=l,\bw\succeq \bv_p} \psi^s(T_\bw)^{1-q}\mu(\mathcal{C}_\bw)^q\Big)^{1/n} \nonumber\\
&&= \mu(\mathcal{C}_{\bv_p})^{(n-n_p)/n} \prod_{l\in L_p}^{[n_p]} \Big(\sum_{|\bw|=l,\bw\succeq \bv_p} \psi^s(T_\bw)^{1-q}\mu(\mathcal{C}_\bw)^q\Big)^{1/n}. \nonumber
\end{eqnarray}
This establishes the inductive hypothesis ~\eqref{hyp} when $r = p$.

Now assume that ~\eqref{hyp} is valid for $r = k, \ldots, p$ for some $2 \leq k \leq p$. Then
\begin{eqnarray}
&&\hspace{-0.8cm} I_{k-1} = \psi^s(T_{\bv_{k-1}})^{(1-q)/n} \nonumber\\
&& \times \bigg[\int_{\bigwedge(\bu_1^{k-1}, \ldots, \bu_{m_{k-1}}^{k-1})\in\mathcal{J}_{k-1}(\bv_{k-1})} \psi^s(\bu_1^{k-1},\ldots,\bu_{m_{k-1}}^{k-1})^{-1} d\mu(\bu_1^{k-1}) \ldots d\mu(\bu_{m_{k-1}}^{k-1}) \bigg]^{(q-1)/n} \nonumber\\
&& \times \sum_{\bv_k \succeq \bv_{k-1}} \int_{\bv\succeq\bv_k} \bigg[\int_{\bigwedge(\bu_1^k, \ldots, \bu_{m_k}^k)\in\mathcal{J}_k(\bv_k)} \ldots \int_{\bigwedge(\bu_1^p, \ldots, \bu_{m_p}^p)\in\mathcal{J}_p(\bv_p)} \nonumber\\
&& \hspace{3.5cm} \psi^s(\bu_1^k,\ldots,\bu_{m_k}^k)^{-1} \psi^s(T_{\bv_k})^{-1} \ldots \psi^s(\bu_1^p,\ldots,\bu_{m_p}^p)^{-1} \psi^s(T_{\bv_p})^{-1}\nonumber\\
&& \hspace{3.5cm}d\mu(\bu_1^k) \ldots d\mu(\bu_{m_k}^k) \ldots d\mu(\bu_1^p) \ldots d\mu(\bu_{m_p}^p) \bigg]^{(q-1)/n}d\mu(\bv) \nonumber\\
&\leq& \psi^s(T_{\bv_{k-1}})^{(1-q)/n} \mu(\mathcal{C}_{\bv_{k-1}})^{(q-m_{k-1})/n} \prod_{l\in L(\mathcal{J}_{k-1}(\bv_{k-1}))}^{[m_{k-1}-1]} \Big(\sum_{|\bw|=l,\bw\succeq \bv_{k-1}} \psi^s(T_\bw)^{1-q}\mu(\mathcal{C}_\bw)^q\Big)^{1/n} \nonumber\\
&& \times \sum_{\bv_k \succeq \bv_{k-1}} \mu(\mathcal{C}_{\bv_k})^{(n-n_k)/n} \prod_{l\in L_k}^{[n_k]} \Big(\sum_{|\bw|=l,\bw\succeq \bv_k} \psi^s(T_\bw)^{1-q}\mu(\mathcal{C}_\bw)^q\Big)^{1/n}. \nonumber
\end{eqnarray}

Using H\"{o}lder's inequality for each $\bv_{k-1}$:
\begin{eqnarray*}
&&\sum_{\bv_k \succeq \bv_{k-1}} \mu(\mathcal{C}_{\bv_k})^{(n-n_k)/n} \prod_{l\in L_k}^{[n_k]} \Big(\sum_{|\bw|=l,\bw\succeq \bv_k} \psi^s(T_\bw)^{1-q}\mu(\mathcal{C}_\bw)^q\Big)^{1/n} \\
&\leq& \Big(\sum_{\bv_k \succeq \bv_{k-1}} \mu(\mathcal{C}_{\bw_k})\Big)^{(n-n_k)/n} \prod_{l\in L_k}^{[n_k]} \Big(\sum_{\bv_k \succeq \bv_{k-1}} \sum_{|\bw|=l,\bw\succeq \bv_k} \psi^s(T_\bw)^{1-q}\mu(\mathcal{C}_\bw)^q\Big)^{1/n} \\
&=& \mu(\mathcal{C}_{\bv_{k-1}})^{(n-n_k)/n} \prod_{l\in L_k}^{[n_k]} \Big(\sum_{|\bw|=l,\bw\succeq \bv_{k-1}} \psi^s(T_\bw)^{1-q}\mu(\mathcal{C}_\bw)^q\Big)^{1/n}.
\end{eqnarray*}
Hence
\begin{eqnarray*}
&& \hspace{-1cm}I_{k-1} \leq \mu(\mathcal{C}_{\bv_{k-1}})^{(n-n_k-m_{k-1})/n} \big(\psi^s(T_{\bv_{k-1}})^{1-q}\mu(\mathcal{C}_{\bv_{k-1}})^q \big)^{1/n} \\
&& \times \prod_{l\in L(\mathcal{J}_{k-1}(\bv_{k-1}))}^{[m_{k-1}-1]} \Big(\sum_{\substack{|\bw|=l \\ \bw\succeq \bv_{k-1}}} \psi^s(T_\bw)^{1-q}\mu(\mathcal{C}_\bw)^q\Big)^{1/n} \prod_{l\in L_k}^{[n_k]} \Big(\sum_{\substack{|\bw|=l \\ \bw\succeq \bv_{k-1}}} \psi^s(T_\bw)^{1-q}\mu(\mathcal{C}_\bw)^q\Big)^{1/n} \\
&=& \mu(\mathcal{C}_{\bv_{k-1}})^{(n-n_k-m_{k-1})/n} \prod_{l\in L_{k-1}}^{[m_{k-1}+n_k]} \Big(\sum_{\substack{|\bw|=l \\ \bw\succeq \bv_{k-1}}} \psi^s(T_\bw)^{1-q}\mu(\mathcal{C}_\bw)^q\Big)^{1/n} \\
&=& \mu(\mathcal{C}_{\bv_{k-1}})^{(n_{k-1})/n} \prod_{l\in L_{k-1}}^{[n_{k-1}]} \Big(\sum_{\substack{|\bw|=l \\ \bw\succeq \bv_{k-1}}} \psi^s(T_\bw)^{1-q}\mu(\mathcal{C}_\bw)^q\Big)^{1/n},
\end{eqnarray*}
which is ~\eqref{hyp} with $r=k-1$.

Note that $n_1=n$. Taking $r=1$, we have that
\begin{equation}
I_1 \leq \prod_{l\in L_1}^{[n]}\Big(\sum_{\substack{|\bw|=l \\ \bw\succeq\bv_1}} \psi^s(T_\bw)^{1-q}\mu(\mathcal{C}_\bw)^q \Big)^{1/n}.
\end{equation}
Summing over all $\bv_1$ at level $k_1$ and using H\"{o}lder's inequality again, we obtain the conclusion.
\end{proof}

Next, we put  all the estimates together, and obtain an almost sure lower bound for the lower $q$-dimension of measures on nonautonomous self-affine sets for $q>1$, which coincides with the upper bound of Theorem~\ref{Dq_ub}.

\begin{proof}[Proof of Theorem~\ref{thm_Dq1}]
Suppose that $d_q^-<d$. By Theorem \ref{Dq_ub}, it is sufficient to prove that for $q>1$
$$
\underline{D}_q(\mu^\w) \geq d_q^-.
$$

For non-integral $s_1<s_2<d_q^-$, by Proposition~\ref{pro_pdq_eq}, we have that for $ q>1$,
$$
\limsup_{k\to \infty} \sum_{\bu\in\Sigma^k} \psi^{s_2}(T_\bu)^{1-q}\mu(\mathcal{C}_\bu)^q<\infty.
$$
Then there exist constants $C>0$ and $K>1$ such that
for all $k>K$, we have that
$$
\sum_{\bu\in\Sigma^k} \psi^{s_2}(T_\bu)^{1-q}\mu(\mathcal{C}_\bu)^q \leq C.
$$
Since $s_1<s_2$, by \eqref{phis1}, we have that
$$
\psi^{s_1}(T_\bu)\geq \alpha_+^{-k(s_2-s_1)}\psi^{s_2}(T_\bu),
$$
and it immediately follows that
$$
\sum_{|\bu|=k}\psi^{s_1}(T_\bu)^{1-q}\mu(\mathcal{C}_\bu)^q \leq \alpha_+^{k(s_1-s_2)(1-q)} \sum_{\bu\in\Sigma^k} \psi^{s_2}(T_\bu)^{1-q}\mu(\mathcal{C}_\bu)^q.
$$
Let $\lambda=\alpha_+^{(s_1-s_2)(1-q)}$. Since $q>1$, it is clear that $0<\lambda<1$, and we have that
\begin{equation} \label{ineq_psilmd}
\sum_{|\bu|=k}\psi^{s_1}(T_\bu)^{1-q}\mu(\mathcal{C}_\bu)^q \leq C \lambda^k.
\end{equation}

For each non-integral $0<s<s_1$, by Proposition~\ref{calE}, it follows
that,
\begin{eqnarray}
&& \hspace{-0.8cm} \mathbb{E} \Big( r^{s(1-q)}\int \mu^\w(B(x,r))^{q-1}d\mu^\w(x)\Big)  = r^{(s_1-s)(q-1)} \mathbb{E} \Big( r^{s_1(1-q)}\int \mu^\w(B(x,r))^{q-1}d\mu^\w(x)\Big) \nonumber\\
&& \hspace{1cm}  \leq C r^{(s_1-s)(q-1)} \int\left[\int \ldots \int \psi^{s_1}(\bu_1,\ldots,\bu_n,\bv)^{-1} d\mu(\bu_1)\ldots d\mu(\bu_n)\right]^{(q-1)/n}d\mu(\bv), \label{ineq_ers}
\end{eqnarray}
for all sufficiently small $r$.
Write
$$
I=\int\left[\int \ldots \int \psi^{s_1}(\bu_1,\ldots,\bu_n,\bv)^{-1} d\mu(\bu_1)\ldots d\mu(\bu_n)\right]^{(q-1)/n}d\mu(\bv).
$$
For each $\bv$ we decompose the integral inside the square brackets as a sum of integrals taken over all $0 \leq k_1 < \ldots <k_p$ and all $m_1,\ldots,m_p\geq 1$ such that $m_1+\ldots+m_p=n$, all join classes $\mathcal{J}_1(\bv_1),\ldots,\mathcal{J}_p(\bv_p)$ where $\mathcal{J}_r(\bv_r)$ has root $\bv_r=\bv|_{k_r}$ and spread $m_r$. By Proposition~\ref{pro_intest},  we have
\begin{eqnarray*}
I &=& \int \bigg[ \sum_{\substack{k_1 < \ldots < k_p,\\ m_1 + \ldots + m_p = n,\\ \mathcal{J}_1, \ldots, \mathcal{J}_p}} \int_{\bigwedge(\bu_1,\ldots,\bu_{m_1}) \in\mathcal{J}_1(\bv_1)} \ldots  \\
&&\hspace{1cm} \int_{\bigwedge(\bu_{n-m_p+1},\ldots,\bu_n) \in\mathcal{J}_p(\bv_p)} \psi^s(\bu_1,\ldots,\bu_n,\bv)^{-1} d\mu(\bu_1)\ldots d\mu(\bu_n) \bigg]^{(q-1)/n} d\mu(\bv) \\
&\leq& \sum_{\substack{k_1 < \ldots < k_p,\\ m_1 + \ldots + m_p = n,\\ \mathcal{J}_1, \ldots, \mathcal{J}_p}} \int \bigg[ \int_{\bigwedge(\bu_1,\ldots,\bu_{m_1}) \in\mathcal{J}_1(\bv_1)} \ldots \\
&&\hspace{1cm} \int_{\bigwedge(\bu_{n-m_p+1},\ldots,\bu_n) \in\mathcal{J}_p(\bv_p)} \psi^s(\bu_1,\ldots,\bu_n,\bv)^{-1} d\mu(\bu_1)\ldots d\mu(\bu_n) \bigg]^{(q-1)/n} d\mu(\bv) \\
&\leq& \sum_{\substack{k_1 < \ldots < k_p,\\ m_1 + \ldots + m_p = n,\\ \mathcal{J}_1, \ldots, \mathcal{J}_p}} \prod_{l\in L}^{[n]}\Big(\sum_{|\bw|=l} \psi^s(T_\bw)^{1-q}\mu(\mathcal{C}_\bw)^q \Big)^{1/n},
\end{eqnarray*}
where the product is over the set of levels $L = {L(\mathcal{J}_r), \ldots, L(\mathcal{J}_p), k_r, \ldots, k_p}$ counted with repetitions. Let
\begin{eqnarray}
\hspace{1cm} M(l_1,\ldots,l_n) &=& \sharp\big\{(\mathcal{J}_1,\ldots,\mathcal{J}_p,  k_1,\ldots,k_p):1\leq p \leq n, 0\leq k_1<\ldots<k_p,\nonumber\\
&& \qquad \mathcal{J}_r \text{ is a join class with root at level } k_r\ \text{such that} \nonumber\\
&& \qquad  L(\mathcal{J}_1,\ldots,\mathcal{J}_p,  k_1,\ldots,k_p)=\{l_1,\ldots,l_n\}\big\}.
\end{eqnarray}
By~\eqref{ineq_psilmd}, we have
$$
I \leq \sum_{\substack{k_1 < \ldots < k_p,\\ m_1 + \ldots + m_p = n,\\ \mathcal{J}_1, \ldots, \mathcal{J}_p}} \prod_{l\in L}^{[n]}(C\lambda^l)^{1/n}
\leq C \sum_{0 \leq l_1 \leq \ldots \leq l_n} M(l_1,\ldots,l_n) \lambda^{(l_1+\ldots+l_n)/n},
$$

We write $M_0(l_1, \ldots, l_n)$ for the total number of join classes with root $\emptyset$ (the root of the tree $\Sigma^*$) and levels $l_1 \leq \ldots \leq l_n$. Note that $M_0(l_1)=1$. Every join set with levels $0 \leq l_1 \leq \ldots \leq l_n \leq l_{n+1}$ may be obtained
by joining a vertex at level $l_{n+1}$ to some vertex of a join set with levels $0 \leq l_1 \leq \ldots \leq l_n$ through a path in the tree $\Sigma^*$, and this may be done in at most $n$ inequivalent ways to within tree automorphism. Thus $M_0(l_1, \ldots, l_{n+1}) \leq n M_0(l_1,\ldots,l_n) \leq n!$. Since
$$
M(l_1,\ldots,l_n) \leq M_0(l_1,\ldots,l_n)\leq (n-1)!,
$$
it follows that
\begin{eqnarray*}
I  &\leq& C (n-1)! \sum_{l_1 \leq \ldots \leq l_n} \lambda^{(l_1+\ldots+l_n)/n}\leq C (n-1)! \sum_{k=1}^\infty P(k)\lambda^{k/n} < \infty, 
\end{eqnarray*}
where $P(k)$ is the number of distinct ways of partitioning the integer $k$ into a sum of $n$ integers $k = l_1 + \ldots + l_n$ where $l_1 \leq \ldots \leq l_n$, and the series $\sum_{k=1}^\infty P(k)\lambda^{k/n}$ converges for $0 < \lambda < 1$ since $P(k)$ is polynomially bounded.
Combining this with ~\eqref{ineq_ers},  for all $0<s<s_1$, we have that
$$
\mathbb{E} \Big(\int r^{s(1-q)}\mu^\w(B(x,r))^{q-1}d\mu^\w(x)  \Big)\leq Cr^{(s_1-s)(q-1)}
$$
for all sufficiently small $r$, for some constant $C>0$.

By Markov's Inequality,
$$
\mathbf{P} \left\{ \int r^{s(1-q)}\mu^\w(B(x,r))^{q-1}d\mu^\w(x)\geq \sqrt{Cr^{(s_1-s)(q-1)}} \right\} \leq \sqrt{Cr^{(s_1-s)(q-1)}}.
$$
For all $0<\rho<1$, the Borel-Cantelli lemma implies that the sequence
$$
\int (\rho^k)^{s(1-q)}\mu^\w(B(x,\rho^k))^{q-1}d\mu^\w(x)
$$
converges to $0$ almost surely. Write
$$
f(r)= \int r^{s(1-q)}\mu^\w(B(x,r))^{q-1}d\mu^\w(x),
$$
and it is clear the $\lim_{k\to\infty} f(\rho^{k})=0$ almost surely.

Given $\rho>0$.  For every sufficiently small $r>0$, the exists a unique integer $k>0$ such that $\rho^{k+1}\leq r<\rho^k$. For $q>1$,  it follows that  
$$
\rho^{s(q-1)} f(\rho^{k+1}) \leq f(r) \leq \frac{f(\rho^k)}{\rho^{s(q-1)}},
$$
and this implies that
$$
\lim_{r\to 0}\int r^{s(1-q)}\mu^\w(B(x,r))^{q-1} d\mu^\w(x)=\lim_{r\to 0} f(r)=0
$$
almost surely. By Proposition \ref{pro_eqdefDq}, this implies that   $\underline{D}_q\geq s$  for all $s<d_q^-$, and we have that
$$
\underline{D}_q(\mu^\w)\geq\min\{d_q^-,d\}
$$
almost surely. Combining this with Theorem~\ref{Dq_ub}, the conclusion holds.

\end{proof}

Finally, we explore the generlized $q$-dimension of measures supported on almost self-affine sets. Recall that  almost self-affine set $E^\omega$ is  a  nonautonomous affine set with random translations  where all  $\Xi_{k}$ are identical, that is,
$$
\Xi_{k}=\Xi=\{T_{1},T_{2},\ldots,T_{n_0}\}.
$$
Falconer first studied generalized $q$-dimension on these sets, and he provided the generalized $q$-dimension formula for $q>1$, see ~\cite{Falco10} for details. We improve the conclusion to $q\geq 1$ by applying Theorem~\ref{thm_Dq1} and Proposition \ref{Dq_eq}.
\begin{proof}[Proof of Corollary~\ref{Dq_aa}]
For $q>1$,  the fact $d_q^-=d_q^+=d_q$ was proved by Falconer in ~\cite[Lemma 8.1]{Falco10}, and by Theorem~\ref{thm_Dq1} and Proposition \ref{Dq_eq}, the generalized $q$-dimension $D_q(\mu^\w)$ exists and
$$
D_q(\mu^\w)=\min\{d_q,d\}.
$$

Since $\Xi_{k}=\Xi=\{T_{1},T_{2},\ldots,T_{n_0}\}$, the singular value function $\psi^s$ is submultiplicative, and by the subadditive ergodic theorem (see \cite{Ste}), the following limit exists  for almost all $\bu \in \Sigma^\infty$, i.e.,
\begin{eqnarray*}
\lim_{k\to \infty} \frac{1}{k}\log\left(\psi^s(T_{\bu|k})^{-1} p_{\bu|k} \right)&=&\lim_{k\to \infty} \frac{1}{k}\int \log\left(\psi^s(T_{\bu|k})^{-1} p_{\bu|k} \right) d\mu(\bu)\\
&=& \lim_{k\to \infty} \frac{1}{k} \sum_{\bu\in \Sigma^k} p_{\bu} \log\left(\psi^s(T_{\bu})^{-1} p_{\bu} \right).
\end{eqnarray*}
We write
$$
f(s)=\lim_{k\to \infty} \frac{1}{k}\log\left(\psi^s(T_{\bu|k})^{-1} p_{\bu|k} \right).
$$
By the similar argument as before, it is clear that $f(s)$ is strictly monotonic increasing and continuous in $s$. Therefore $d_1$ is the unique solution to $f(s)=0$.

It remains to show that  $d_1^-=d_1^+=d_1$, and  the generalized $q$-dimension $D_1(\mu^\w)$ exists and
$$
D_1(\mu^\w)=\min\{d_1,d\}.
$$
Since by Theorem~\ref{Dq_ub}, we have that
$$
\underline{D}_1(\mu^\w) \leq d_1^-, \qquad \textit{ and }\qquad  \underline{D}_1(\mu^\w) \leq \overline{D}_1(\mu^\w) \leq d_1^+,
$$
and the conclusion follows if the inequalities  $d_1\leq \underline{D}_1(\mu^\w) $ and $d_1^+ \leq d_1$ hold.

We first prove that  $d_1\leq \underline{D}_1(\mu^\w) $. Arbitrarily choosing $s<d_1$, since  $f(s)$ is strictly monotonic increasing and continuous in $s$, we have that
$$
f(s)=\lim_{k\to \infty} \frac{1}{k}\log\left(\psi^s(T_{\bu|k})^{-1} p_{\bu|k} \right) < 0,
$$
for almost all $\bu\in \Sigma^\infty$, and it follows that
$$
\sum_{k=0}^{\infty} \psi^s(T_{\bu|k})^{-1} p_{\bu|k} < \infty.
$$

For all non-integral $s$ such that $0 < s < d_1$, by Lemma~\ref{lemE}, Fubini's theorem and tower property,
\begin{eqnarray*}
\mathbb{E} \big( \mu^{\w}(B(\Pi^{\w}(\bu),r)) \big)
&\leq& \mathbb{E}\Big( \int_{\Sigma_\infty}r^s |\Pi^{\w}(\bu)-\Pi^{\w}(\bv)|^{-s}\Big)
d\mu(\bv)  \\
&\leq& r^s
\int_{\Sigma_\infty}\frac{C}{\psi^s(T_{\bu\wedge\bv})} d\mu(\bv)  \\
&\leq& C r^s \sum_{k=0}^{\infty}\psi^s(T_{\bu|k})^{-1}
\mu(\mathcal{C}_{\bu|k}) \\
&\leq& C_1 r^s,
\end{eqnarray*}
for almost all $\bu$, where $C_1$ is a constant that depends on $\bu$. Hence for $t<s<d_1$,  
$$
\mathbb{E} \big( r^{-t} \mu^{\w}(B(\Pi^{\w}(\bu),r)) \big) \leq \sqrt{C_1 r^{s-t}},
$$
for all $0<r<1$, for almost all $\bu$. By Markov's Inequality,
$$
\mathbf{P} \big\{ r^{-t} \mu^{\w}(B(\Pi^{\w}(\bu),r)) \geq \sqrt{C_1 r^{s-t}} \big\} \leq C_1 r^{s-t}.
$$
For all $0<\rho<1$, the Borel-Cantelli lemma implies that the sequence
$$
\rho^{-kt} \mu^{\w}(B(\Pi^{\w}(\bu),\rho^k))
$$   
converges to $0$ almost surely. Write 
$$
g(r)=r^{-t} \mu^{\w}(B(\Pi^{\w}(\bu),r))
$$
and it is clear the $\lim_{k\to\infty} g(\rho^{k})=0$ almost surely. 

Given $\rho>0$.  For every sufficiently small $r>0$, the exists a unique integer $k>0$ such that $\rho^{k+1}\leq r<\rho^k$. For $q>1$,  we have  that
$$
\rho^t g(\rho^{k+1}) \leq g(r) \leq \frac{g(\rho^k)}{\rho^t},
$$
and this implies that
$$
\lim_{r\to 0} r^{-t} \mu^{\w}(B(\Pi^{\w}(\bu),r)) = \lim_{r\to 0} g(r)=0
$$
almost surely. Hence for almost all $\bu\in \Sigma^\infty$,
$$
\liminf_{r \to 0} \frac{\log \mu^{\w}(B(\Pi^{\w}(\bu),r))}{\log r} \geq t
$$
almost surely. It follows that for almost all $\w$, for any sequence $\{b_n\}$ such that $\lim_{n\to \infty} b_n = 0$,
$$
\liminf_{n \to \infty} \frac{\log \mu^{\w}(B(\Pi^{\w}(\bu),b_n))}{\log b_n} \geq t
$$
for almost all $\bu\in \Sigma^\infty$. Then by Fatou's Lemma, for almost all $\w$, 
$$
\liminf_{n \to \infty} \int_{\Sigma^\infty} \frac{\log \mu^{\w}(B(\Pi^{\w}(\bu),b_n))}{\log b_n} d\mu(\bu) \geq \int_{\Sigma^\infty} \liminf_{n \to \infty} \frac{\log \mu^{\w}(B(\Pi^{\w}(\bu),b_n))}{\log b_n} d\mu(\bu) \geq t.
$$
Since the sequence $\{b_n\}$ is arbitrarily chosen, we have that for almost all $\w$, 
$$
\underline{D}_1(\mu^\w) = \liminf_{r \to 0} \int_{\Sigma^\infty} \frac{\log \mu^{\w}(B(\Pi^{\w}(\bu),r))}{\log r} d\mu(\bu) \geq t.
$$          
Since $t<s<d_1$ is arbitrarily chosen, we obtain that
\begin{equation}\label{ineq_D1geqd1}
\underline{D}_1(\mu^\w) \geq d_1
\end{equation}
almost surely.

Next, we prove that $d_1^+ \leq d_1$.
Given $s>d_1$. Since  $f(s)$ is strictly monotonic increasing and continuous in $s$, we have that
$$
f(s)=\lim_{k\to \infty} \frac{1}{k} \sum_{\bu\in \Sigma^k} p_{\bu} \log\left(\psi^s(T_{\bu})^{-1} p_{\bu} \right)>0,
$$
and $f(s)$ only depends on $s$. Fix $0<\gamma<f(s)$, there exists $K>0$ such that for all $k\geq K$,
\begin{equation} \label{fs>gma}
\sum_{\bu\in \Sigma^k} p_{\bu} \log\left(\psi^s(T_{\bu})^{-1} p_{\bu} \right) >K \gamma.
\end{equation}

For all $\bu\in \Sigma^*$, we have
\begin{equation}\label{ineq_sumpuj}
\sum_{j=1}^{n_0} p_{\bu j} \log \big(\psi^s(T_{\bu j})^{-1} p_{\bu j}\big) \geq p_{\bu} \log \left(\psi^s(T_{\bu})^{-1} p_{\bu}\right) + p_{\bu} \Big(\sum_{j=1}^{n_0} p_j \log\left(\psi^s(T_j)^{-1} p_j\right)\Big)
\end{equation}

For every $r>0$,  recall that
$$
\Sigma^*(s, r)=\{\mathbf{u}=u_1\ldots u_k \in \Sigma^*: \alpha_m(T_\mathbf{u})  \leq r < \alpha_m(T_{\bu^-}) \}.
$$
Let
$$
K_1=\max\{|\bu|: \bu\in \Sigma^*(s,r)\},\qquad K_2=\min\{|\bu|: \bu\in \Sigma^*(s,r)\}.
$$
It is clear that  $K_2 $ tends to $\infty$ as $r$ tends to $0$. Hence, we assume that  $K_2 \geq K$.

For each $\bu\in \Sigma^*(s,r)$ with length $K_1$, it is clear that $\bu^-j\in \Sigma^*(s,r)$ for all $j\in\{1,\ldots,n_0\}$. If $\sum_{j=1}^{n_0} p_j\log \left(\psi^s(T_j)^{-1} p_j\right) \geq 0$, then by ~\eqref{ineq_sumpuj}, we have
\begin{equation*}
\sum_{j=1}^{n_0} p_{\bu^- j} \log \Big(\psi^s(T_{\bu^- j})^{-1} p_{\bu^- j}\Big)
\geq p_{\bu^-} \log \left(\psi^s(T_{\bu^-})^{-1} p_{\bu^-}\right).
\end{equation*}
By repeating this process $(K_1-K_2)$ times, we have
$$
\sum_{\bu\in \Sigma^*(s,r)}p_{\bu} \log \left(\psi^s(T_{\bu})^{-1} p_{\bu}\right) \geq \sum_{\bu\in \Sigma^{K_2}} p_{\bu} \log \left(\psi^s(T_{\bu})^{-1} p_{\bu}\right)>K_2 \gamma.
$$

If $\sum_{j=1}^{n_0} p_j \log \left(\psi^s(T_j)^{-1} p_j\right) < 0$, then we write $W_K(s,r)$ for the collection of all  finite words $\bv$ such that:
\begin{itemize}
\item[(i).] $|\bv|$ is a multiple of $K$;
\item[(ii).] $\alpha_m(T_\bv) \geq r$;
\item[(iii).] $\alpha_m(T_{\bv \bv'}) < r$ for some $\bv' \in \Sigma^K$.
\end{itemize}
Note that $W_K(s,r)$ gives a cover of $\Sigma^\infty$, but $W_K(s,r)$ is not a cut set.
For $\bu, \bv\in\Sigma^*$ such that $|\bu|>|\bv|$, we say $\bu$ is an extension  of $\bv$  if there exists an integer $k>0$ such that $\bv=\bu|k$. We write  $\bu \succeq \bv$ if either $\bv=\bu$ or $\bu$ is an extension  of $\bv$.  Let $W'_K(s,r)$ be a subset of  $W_K(s,r)$ consisting  of words which are not extensions of  other word in $W_K(s,r)$, i.e,
$$
W'_K(s,r)=\{\bv\in W_K(s,r) : \bv|k \notin  W_K(s,r) \textit { for all } k>0\}.
$$
 It is clear that $W'_K(s,r)$ is a cut set. 

For every  $\bu \in \Sigma^*(s,r)$, there exists a unique $\bv \in W'_K(s,r)$ such that $\bu \succeq \bv$. Write
$$
\Sigma^*(s,r,\bv)=\{\bu \in \Sigma^*(s,r): \bu \succeq \bv\},
$$
and it is clear that
$$
\mathcal{C}_\bv=\bigcup_{\bu \in \Sigma^*(s,r,\bv)} \mathcal{C}_\bu, \qquad \textit{and} \qquad  \Sigma^*(s,r)= \bigcup_{\bv \in W'_K(s,r)}  \Sigma^*(s,r,\bv).
$$

Given $\bv \in W'_K(s,r)$, by (iii), we have
$$
\alpha_-^K \alpha_m(T_\bv) \leq \alpha_m(T_{\bv \bv'}) < r
$$
for some $\bv' \in \Sigma^K$, and it is clear that $\alpha_m(T_\bv) < \alpha_-^{-K} r$. Hence  for all  $\bu \in \Sigma^*(s,r,\bv)$, we have that
$$
\alpha_+^{|\bu|-1-|\bv|} \alpha_-^{-K} r > \alpha_+^{|\bu|-1-|\bv|} \alpha_m(T_\bv) \geq \alpha_m(T_{\bu^-}) > r.
$$
This implies that  $|\bu| - |\bv| < C(K)$ where $C(K)=\frac{K\log \alpha_-}{\log \alpha_+}+1$ is a constant and only depends on $K$. Similarly, we write  $K_1(\bv)=\max\{|\bu|: \bu \in \Sigma^*(s,r,\bv)\}$, and it is clear that
\begin{equation}\label{ineq_K1}
K_1(\bv) < |\bv| + C(K).
\end{equation}

For each $\bu \in \Sigma^*(s,r,\bv)$ with length $K_1(\bv)$, it is clear that $\bu^-j\in \Sigma^*(s,r)$ for all $j\in\{1,\ldots,n_0\}$, and it implies that
$$
\sum_{j=1}^{n_0} p_{\bu^- j} \log \Big(\psi^s(T_{\bu^- j})^{-1} p_{\bu^- j}\Big) \geq p_{\bu^-} \log \left(\psi^s(T_{\bu^-})^{-1} p_{\bu^-}\right) + p_{\bu^-} \Big(\sum_{j=1}^{n_0} p_j \log\left(\psi^s(T_j)^{-1} p_j\right)\Big).
$$
Since $\sum_j p_j \log\left(\psi^s(T_j)^{-1} p_j\right) < 0$ and $p_{\bu^-}\leq p_{\bv} $  for all $\bu \in \Sigma^*(s,r,\bv)$, we have that
\begin{eqnarray*}
&& \hspace{-2cm} \sum_{\bu \in \Sigma^*(s,r,\bv) \atop |\bu|=K_1(\bv)} p_{\bu} \log \Big(\psi^s(T_{\bu})^{-1} p_{\bu}\Big) = \sum_{|\bu^-|=K_1(\bv)-1} \sum_{j=1}^{n_0} p_{\bu^- j} \log \Big(\psi^s(T_{\bu^- j})^{-1} p_{\bu^- j}\Big) \\
&\geq& \sum_{|\bu^-|=K_1(\bv)-1 \atop \bu \in \Sigma^*(s,r,\bv)} \left(p_{\bu^-} \log \left(\psi^s(T_{\bu^-})^{-1} p_{\bu^-}\right) + p_{\bu^-} \Big(\sum_{j=1}^{n_0} p_j \log\left(\psi^s(T_j)^{-1} p_j\right)\Big) \right) \\
&\geq& \sum_{|\bu^-|=K_1(\bv)-1 \atop \bu \in \Sigma^*(s,r,\bv)} \left(p_{\bu^-} \log \left(\psi^s(T_{\bu^-})^{-1} p_{\bu^-}\right)\right) + p_{\bv} \Big(\sum_{j=1}^{n_0} p_j \log\left(\psi^s(T_j)^{-1} p_j\right)\Big).
\end{eqnarray*}
By repeating this process $K_1(\bv)-|\bv|$ times and ~\eqref{ineq_K1}, we have
\begin{eqnarray}
&& \hspace{-2cm} \sum_{\bu \in \Sigma^*(s,r,\bv)} p_{\bu} \log \Big(\psi^s(T_{\bu})^{-1} p_{\bu}\Big) \nonumber \\
&\geq& p_{\bv} \log \left(\psi^s(T_{\bv})^{-1} p_{\bv}\right) + (K_1(\bv) - |\bv|) p_{\bv} \Big(\sum_{j=1}^{n_0} p_j \log\left(\psi^s(T_j)^{-1} p_j\right)\Big) \nonumber \\
&>& p_{\bv} \log \left(\psi^s(T_{\bv})^{-1} p_{\bv}\right) + C(K) p_{\bv} \Big(\sum_{j=1}^{n_0} p_j \log\left(\psi^s(T_j)^{-1} p_j\right)\Big). \label{ineq_sumpv}
\end{eqnarray}
Summing up ~\eqref{ineq_sumpv}, we have
\begin{eqnarray}
&& \hspace{-2cm} \sum_{\bu \in \Sigma^*(s,r)} p_{\bu} \log \Big(\psi^s(T_{\bu})^{-1} p_{\bu}\Big)= \sum_{\bv \in W'_K(s,r)} \sum_{\bu \in \Sigma^*(s,r,\bv)} p_{\bu} \log \Big(\psi^s(T_{\bu})^{-1} p_{\bu}\Big) \nonumber \\
&\geq&  \sum_{\bv \in W'_K(s,r)} p_{\bv} \log \left(\psi^s(T_{\bv})^{-1} p_{\bv}\right) + C(K) \Big(\sum_{j=1}^{n_0} p_j \log\left(\psi^s(T_j)^{-1} p_j\right)\Big). \label{ineq_sumpu}
\end{eqnarray}
We write
$$
K_3=\max\{|\bv|: \bv\in W'_K(s,r)\},\qquad K_4=\min\{|\bv|: \bv\in W'_K(s,r)\},
$$
and  $K_3$ and $K_4$ are both multiples of $K$. It is clear that $K_4 \geq K$ and $K_4$ tends to $ \infty$ as $r$ goes to $ 0$. Since $W'_K(s,r) \subset W_K(s,r)$ is a cut set, by (i), for every $\bw\in W'_K(s,r)$ with $|\bw|=K_3$, we write $\bw=\bv\bv'$, where $|\bv'|=K$. It is clear that $\bv \bv'' \in W'_K(s,r)$ for all $\bv''\in \Sigma^K$. By ~\eqref{fs>gma} and ~\eqref{ineq_sumpuj},  we have
\begin{eqnarray*}
&& \hspace{-1cm} \sum_{\bv''\in \Sigma^K} p_{\bv \bv''} \log \Big(\psi^s(T_{\bv \bv''})^{-1} p_{\bv \bv''}\Big) \geq p_{\bv} \log \Big(\psi^s(T_{\bv})^{-1} p_{\bv}\Big) + \sum_{\bv''\in \Sigma^K} p_{\bv''} \log\left(\psi^s(T_{\bv''})^{-1} p_{\bv''} \right) \\
&&\hspace{4.9cm}> p_{\bv} \log \Big(\psi^s(T_{\bv})^{-1} p_{\bv}\Big)+K\gamma.
\end{eqnarray*}
Since $K_4$ is a multiple of $K$, by applying above argument repeatedly, we have
$$
\sum_{\bv \in W'_K(s,r)} p_{\bv} \log \left(\psi^s(T_{\bv})^{-1} p_{\bv}\right) > \sum_{\bv \in \Sigma^{K_4}} p_{\bv} \log \left(\psi^s(T_{\bv})^{-1} p_{\bv}\right) > K_4 \gamma.
$$
Combining this with ~\eqref{ineq_sumpu}, we obtain that
$$
\sum_{\bu \in \Sigma^*(s,r)} p_{\bu} \log \Big(\psi^s(T_{\bu})^{-1} p_{\bu}\Big) > K_4 \gamma + C(K) \Big(\sum_{j=1}^{n_0} p_j \log\left(\psi^s(T_j)^{-1} p_j\right)\Big).
$$
Since $K_4$ tends to $\infty$ as $r$ goes to $0$, we have
$$
\liminf_{r\to 0} \sum_{\bu \in \Sigma^*(s,r)} p_{\bu} \log \Big(\psi^s(T_{\bu})^{-1} p_{\bu}\Big) = \infty.
$$
It follows that $s>d_1^+$ for all  $s>d_1$, and  we have
\begin{equation}\label{ineq_d1+leqd1}
d_1^+ \leq d_1.
\end{equation}

Combining \eqref{ineq_D1geqd1} and \eqref{ineq_d1+leqd1} together, we obtain that  $d_1=d_1^+=d_1^-$, and $D_1(\mu^\w)=d_1$, and the conclusion follows.

\end{proof}

\section{ Generalized $q$-dimensions of measures on nonautonomous affine sets with finitely many translations}\label{sec_6}

The following lemma was proved by Falconer in~\cite[Lemma 2.1]{Falco88} which is very useful in studying fractals generated by affine mappings. Let $\overline{B(0,\rho)}$ be the closed ball in $\R^d$ with centre at the origin and radius $\rho$.
\begin{lem}\label{lemtphi}
Let $s$ be non-integral with $0 < s< d$.
Then there exists constant $C<\infty$ dependent on $d, s$ and $\rho$ such that for all non-singular
linear transformations $T\in {\mathcal{ L}}(\R^d,\R^d)$,
$$
\int_{\overline{B(0,\rho)}} \frac{dx}{|Tx|^{s}}
\leq \frac{C}{\psi^{s}(T)}.
$$
\end{lem}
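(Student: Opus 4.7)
The plan is to diagonalise $T$ via singular value decomposition and then decompose the integration into three coordinate blocks matched to the structure of $\psi^{s}$.

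First I would write $T = UDV$ by singular value decomposition, where $U,V$ are orthogonal and $D = \mathrm{diag}(\alpha_1,\ldots,\alpha_d)$ with $\alpha_1 \geq \cdots \geq \alpha_d > 0$. Since orthogonal maps preserve Lebesgue measure and the Euclidean norm, the substitution $y = Vx$ leaves $\overline{B(0,\rho)}$ invariant and gives $|Tx| = |Dy|$, so it suffices to bound
\[
I := \int_{\overline{B(0,\rho)}} \Big(\sum_{i=1}^{d}\alpha_i^{2}y_i^{2}\Big)^{-s/2}\, dy.
\]
Let $m$ be the integer with $m-1 < s < m$ (strict because $s$ is non-integral), so that $\psi^{s}(T) = \alpha_1\cdots\alpha_{m-1}\,\alpha_m^{s-m+1}$.

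Using the lower bound $\sum_{i=1}^{d}\alpha_i^{2}y_i^{2} \geq \sum_{i=1}^{m}\alpha_i^{2}y_i^{2}$ and Fubini, I would integrate out the trailing coordinates $(y_{m+1},\ldots,y_d)$ over a slice of volume at most $C_{1}\rho^{d-m}$, reducing to an $m$-dimensional integral over $\overline{B_m(0,\rho)}$. Then the substitution $u_i = \alpha_i y_i$ for $i=1,\ldots,m-1$ (with $y_m$ held fixed) extracts the prefactor $(\alpha_1\cdots\alpha_{m-1})^{-1}$ and transforms the integrand into $(|u|^{2} + \alpha_m^{2}y_m^{2})^{-s/2}$ on a region contained in $\{|u|\leq \alpha_1\rho\sqrt{m-1},\,|y_m|\leq \rho\}$. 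For each fixed $y_m$, polar coordinates in $\mathbb{R}^{m-1}$ followed by the rescaling $r = \alpha_m|y_m|\,t$ yield
\[
\int_{|u|\leq R}(|u|^{2}+\alpha_m^{2}y_m^{2})^{-s/2}\,du \;=\; \omega_{m-2}\,(\alpha_m|y_m|)^{m-1-s}\int_{0}^{R/(\alpha_m|y_m|)}\frac{t^{m-2}}{(1+t^{2})^{s/2}}\,dt,
\]
and the last integral is bounded uniformly by the convergent $\int_{0}^{\infty}t^{m-2}(1+t^{2})^{-s/2}\,dt$ precisely because $s>m-1$. Integrating $|y_m|^{m-1-s}$ over $|y_m|\leq \rho$ converges because $s<m$ and contributes the remaining factor $\alpha_m^{m-1-s} = \alpha_m^{-(s-m+1)}$. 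Collecting constants gives $I \leq C/(\alpha_1\cdots\alpha_{m-1}\alpha_m^{s-m+1}) = C/\psi^{s}(T)$.

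The main obstacle, modest as it is, is the twin integrability of the one-dimensional integrals at their endpoints: the inner $t$-integral converges at infinity iff $s > m-1$, while the outer $y_m$-integral converges at $0$ iff $s < m$. Both fail exactly when $s$ is an integer, producing logarithmic divergences, which is precisely why the hypothesis excludes integer $s$. Beyond this matched convergence pair, the argument is a standard separation-of-scales computation for the singular value function and poses no genuine difficulty.
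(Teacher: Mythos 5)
Your proof is correct; the paper does not reproduce an argument for this lemma but cites Falconer's Lemma~2.1 from his 1988 paper, and your computation (diagonalise by SVD, discard the trailing $d-m$ coordinates, rescale the first $m-1$ coordinates, and exploit the matched convergence conditions $s>m-1$ at infinity and $s<m$ at the origin) is essentially that standard proof. The only cosmetic point is the degenerate case $m=1$, where there are no $u$-coordinates and the polar-coordinate step should be replaced by the direct estimate $\int_{|y_1|\le\rho}(\alpha_1|y_1|)^{-s}\,dy_1\le C\alpha_1^{-s}=C/\psi^{s}(T)$, valid since $s<1$ there.
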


Finally, we study the nonautonomous affine fractals with finitely many translations. Let $\Gamma=\{a_1,\ldots, a_\tau\}$ be a finite collection of translations, where $a_1,\ldots, a_\tau$ are regarded later as variables in $\R^d$. Suppose that the translation of $\Psi_{u_j}$ is an element of $\Gamma$, that is,
 $$
\Psi_{u_j}(x)=T_{j,u_j}x+ \w_{u_1\ldots u_j}, \qquad \w_{u_1\ldots u_j}\in \Gamma,
  $$
 for $j=1,2,\ldots, k$.
We write  $E^\ba$ for the nonautonomous affine set  to emphasize the dependence on these special translations in $\Gamma$. The following proof is inspired by ~\cite{Falco88,Sol,GM}.

\begin{proof}[Proof of Theorem~\ref{dimmua}]
For $\bu, \bv\in \Sigma^\infty$, we assume that $|\bu \wedge \bv|=n$. Without loss of generality, suppose that $\w_{\bu|{n+1}}=a_1$, $w_{\bv|{n+1}}=a_2$. Then
\begin{eqnarray*}
\Pi^\ba(\bu)-\Pi^\ba(\bv)&=& T_{\bu\wedge\bv}\Big(a_1-a_2+ (T_{n+1,u_{n+1}}\w_{\bu|n+2}+T_{n+1,u_{n+1}}T_{n+2,u_{n+2}} \w_{\bu|{n+3}}+\ldots) \\
&& \qquad \qquad \qquad-(T_{n+1,v_{n+1}}\w_{\bv|n+2}  +T_{n+1,v_{n+1}}T_{n+2,v_{n+2}} \w_{\bu|{n+3}} +\ldots)\Big) \\
&=& T_{\bu\wedge\bv}(a_1-a_2+ H(\ba)),
\end{eqnarray*}
where $H$ is a linear map from $\R^{\tau d}$ to $\R^d$. We may write $H(\ba)=\sum_{a_j\in\Gamma \atop 1\leq j\leq \tau} H_j(a_j)$.

We write
\begin{eqnarray*}
\eta &=&\sup\{\|T_{k,j}\| : T_{k,j}\in \Xi_k, 0<j\leq n_k,  k>0 \},
\end{eqnarray*}
and we have that  $\eta < \frac{1}{2}.$  Let
$$
m=\inf\{k\geq 2:\w_{\bu|{n+k}}=\w_{\bv|{n+k}}\}.
$$
If $m=\infty$, it is straightforward that
$$
\|H_1\|\leq \sum_{k=2}^\infty \eta^{k-1} < 1.
$$
Otherwise for $m<\infty$, it is clear that $\w_{\bu|{n+m}}$ and $\w_{\bv|{n+m}}$ do not  equal to $a_1$ and $a_2$ simultaneously. We assume that $\w_{\bu|{n+m}}=\w_{\bv|{n+m}}\neq a_1$. Since $\eta <\frac{1}{2}$,  it follows that
\begin{eqnarray*}
\|H_1\|&\leq& \sum_{k=2}^{m-1} \eta^{k-1}+ \sum_{k=m+1}^{\infty} 2\eta^{k-1}<1. 
\end{eqnarray*}
This implies that  the linear transformation $I+H_1$ is invertible. We define  the  invertible linear transformation by
\begin{equation}\label{inlitr}
y=a_1-a_2+ H(\ba), \ a_2=a_2,\ldots,\ a_\tau=a_\tau.
\end{equation}

For  non-integral  $s\leq d$,  by Lemma~\ref{lemtphi} and \eqref{inlitr}, it follows that
\begin{eqnarray} \label{ineq_ssiphi}
\int_{\overline{\mathbf{B}(0,\rho)}}\frac{d\ba}{|\Pi^\ba(\bu)-\Pi^\ba(\bv)|^s} &=& \int_{\overline{\mathbf{B}(0,\rho)}}\frac{d\ba}{|T_{\bu\wedge\bv} (a_1-a_2+ H(\ba))|^s}         \nonumber \\
&\leq&C_1 \int \ldots \int_{y\in B_{(2+k)\rho} \atop a_j\in B_\rho} \frac{dy da_2 \ldots}{|T_{\bu\wedge\bv}(y)|^s}     \nonumber\\
&\leq& \frac{C}{\psi^{s}(T_{\bu \wedge \bv})},
\end{eqnarray}
where $C_1$ and $C$ are constants dependent on $d, s$ and $\rho$.

Since $1<q\leq 2$, we have that $0<q-1\leq 1$. By Jensen's inequality,
\begin{eqnarray*}
 \hspace{2cm} && \hspace{-4cm}\int_{\overline{\mathbf{B}(0,\rho)}} \int \mu^\ba(B(x,r))^{q-1} d\mu^\ba(x)d\ba \leq \int_{\overline{\mathbf{B}(0,\rho)}} \int_{\Sigma^\infty} \mu(B(\Pi^\ba(\bu),r))^{q-1} d\mu(\bu)d\ba  \nonumber \\
   &\leq& C_2 \int_{\Sigma^\infty} \Big[\int_{\overline{\mathbf{B}(0,\rho)}} \mu(B(\Pi^\ba(\bu),r))d\ba\Big]^{q-1} d\mu(\bu) \nonumber\\
   &=& C_2 \int_{\Sigma^\infty} \Big[\int_{\overline{\mathbf{B}(0,\rho)}} \mu\{\bv: |\Pi^\ba(\bu)-\Pi^\ba(\bv)| \leq r\} d\ba\Big]^{q-1} d\mu(\bu).
 \end{eqnarray*}
Using ~\eqref{ineq_ssiphi}, we obtain that
\begin{eqnarray}
    \hspace{2cm} && \hspace{-4cm}\int_{\overline{\mathbf{B}(0,\rho)}} \int \mu^\ba(B(x,r))^{q-1} d\mu^\ba(x)d\ba \leq \int_{\Sigma^\infty} \Big[\int_{\overline{\mathbf{B}(0,\rho)}} \int_{\Sigma^\infty} \frac{r^s d\bv d\ba}{|\Pi^\ba(\bu)-\Pi^\ba(\bv)|^s} \Big]^{q-1} d\mu(\bu)    \nonumber \\
   &\leq& C_3 r^{s(q-1)} \int_{\Sigma^\infty} \Big[ \int_{\Sigma^\infty} \psi^s(T_{\bu\wedge\bv})^{-1} d\bv \Big]^{q-1} d\mu(\bu) \nonumber\\
   &\leq& C_3 r^{s(q-1)} \int_{\Sigma^\infty} \Big[ \sum_{k=0}^{\infty} \psi^s(T_{\bu|k})^{-1} \mu(\mathcal{C}_{\bu|k}) \Big]^{q-1} d\mu(\bu)    \nonumber \\
   &\leq& C_3 r^{s(q-1)} \int_{\Sigma^\infty}  \sum_{k=0}^{\infty} \psi^s(T_{\bu|k})^{1-q} \mu(\mathcal{C}_{\bu|k})^{q-1} d\mu(\bu).    \label{ineq_ei}
\end{eqnarray}

Recall that
$$
d_q^-=\sup \Big\{s: \sum_{k=1}^\infty \sum_{\bu\in\Sigma^k} \psi^s(T_\bu)^{1-q}\mu(\mathcal{C}_\bu)^q<\infty\Big\}.
$$
Arbitrarily choosing a non-integral $0<s<d_q^-$, we have that
$$
\sum_{k=1}^\infty \sum_{\bu\in\Sigma^k} \psi^s(T_\bu)^{1-q}\mu(\mathcal{C}_\bu)^q<\infty.
$$
By \eqref{ineq_ei}, it follows that  for almost all $\ba\in \overline{\mathbf{B}(0,\rho)}$,
\begin{eqnarray*}
r^{s(1-q)} \int \mu^\ba(B(x,r))^{q-1} d\mu^\ba(x)
&\leq& C_3 \sum_{k=0}^{\infty} \int_{\Sigma^\infty} \psi^s(T_{\bu})^{1-q} \mu(\mathcal{C}_\bu)^{q-1} \mu(\bu)\\
&=& C_3 \sum_{k=0}^{\infty} \sum_{\bu\in \Sigma^k} \psi^s(T_{\bu})^{1-q} \mu(\mathcal{C}_\bu)^q \\
&<& \infty.
\end{eqnarray*}

By Proposition \ref{pro_eqdefDq}, we obtain that
$\underline{D}_q(\mu^\ba) \geq s$ for all $s<d_q^-$. Since  $s$ is arbitrarily chosen, we have that $\underline{D}_q(\mu^\ba) \geq d_q^-$.
\end{proof}

\begin{proof}[Proof of Corollary~\ref{cor_nasid}]
The proof is similar to  Corollary~\ref{Dq_aa}, and we omit it.
\end{proof}

\end{document}